\newcommand{\hlambda}{\hat{\lambda}}
\newcommand{\htau}{\hat{\tau}}
\newcommand{\htheta}{\hat{\theta}}
\newcommand{\R}{\mathbb{R}}
\def\calL{\mathcal{L}}
\def\calD{\mathcal{D}}
\def\calE{\mathcal{E}}
\def\eps{\varepsilon}
\def\calL{\mathcal{L}}
\def\calN{\mathcal{N}}
\def\calO{\mathcal{O}}
\def\A{A}
\def\B{B}
\def\C{S}
\def\b{\underline{b}}
\def\cc{\widetilde{c}}
\def\g{{\tt g}}
\def\Zsf{Z^{\rm sf}}
\def\tE{\widetilde{\mathcal{E}}}
\def\ta{\widetilde{a}}
\def\te{\widetilde{e}}
 \journalname{PREPRINT}
\begin{document}

\title{Unfolding symmetric Bogdanov-Takens bifurcations for front dynamics in a reaction-diffusion system\thanks{HI was partially supported by JSPS KAKENHI Grant Number JP15K04885.
} \thanks{PvH was supported under the Australian Research Council’s Discovery Early Career Researcher Award funding scheme DE140100741.}\thanks{JR was supported in part by DFG grant Ra 2788/1-1 and TRR 181 project number  274762653.}}

\titlerunning{Symmetric Bogdanov-Takens bifurcations in a reaction-diffusion system}       

\author{M. Chirilus-Bruckner \and P. van Heijster \and H. Ikeda \and J.D.M.\ Rademacher
}

\institute{M. Chirilus-Bruckner \at
              Mathematisch Instituut, Leiden University, P.O. Box 9512, 2300 RA Leiden, The Netherlands \\
              \email{m.chirilus-bruckner@math.leidenuniv.nl}          
           \and
           P. van Heijster \at
              Mathematical Sciences School, Queensland University of Technology, GPO Box 2434, Brisbane, QLD 4001, Australia
                 \and 
              H. Ikeda \at Department of Mathematics, University of Toyama, Gofuku 3190, Toyama, 930-8555, Japan
              \and
              J.D.M. Rademacher \at
              Fachbereich Mathematik, Universit\"at Bremen, Postfach 22 04 40, 20359 Bremen, Germany
}

\date{January 2019}

\maketitle

\begin{abstract}
This manuscript extends the analysis of a much studied singularly perturbed three-component reaction-diffusion system for front dynamics in the regime where the essential spectrum is close to the origin. We confirm a conjecture from 
a preceding paper by proving that the triple multiplicity of the zero eigenvalue gives a Jordan chain of length three. Moreover, we simplify the center manifold reduction and computation of the normal form coefficients by using the Evans function for the eigenvalues. Finally, we prove the unfolding of a Bogdanov-Takens bifurcation with symmetry in the model. This leads to stable periodic front motion, including stable traveling breathers, and these results are illustrated by numerical computations.
\end{abstract}

\keywords{Three-component reaction--diffusion system \and Front solution \and Singular perturbation theory \and Evans function \and Center manifold reduction \and Normal forms}

\section{Introduction} 
Localized structures, such as fronts, pulses, stripes and spots are close to their trivial background states in large regions of their spatial domain and, in small regions, transition between trivial background states, or make an excursion away and back from one of them. These localized structures 
often form the backbone of more complex patterns in reaction-diffusion equations \cite{HM94, NU01,P93}. Understanding localized structures is thus a crucial step towards understanding complex patterns. While significant progress has been made over the past few decades to understand such localized structures, see \cite[e.g.]{BELL,D03,D07,EI,K06, P02,R13,S02,S05} and references therein, many open questions remain.

One of these concerns the influence of the essential spectrum when it approaches the imaginary axis, and the so-called spectral gap becomes asymptotically small. This question is the main motivation of the current manuscript, which is a continuation of \cite{CDHR15}. Here the localised structures are fronts, which are singular perturbations of sharp interfaces of the Allen-Cahn equation coupled to linear large scale fields, and we view this as a caricature model for multiscale effects on interfacial dynamics and energy transfer. There is a large body of literature on related models and also planar fronts with a more physical perspective, e.g., \cite{MBHT01,M15} and the references therein.

We take a mathematical viewpoint and consider the three-component singularly perturbed reaction-diffusion system
\begin{align} \label{eq:three_component_system}
 \left\{ 
 \begin{array}{rlrl}
                                         \partial_t U & = &  \eps^2 \partial_x^2 U & + \ U - U^3 - \eps(\alpha V + \beta W + \gamma) \, , \\[.2cm]
  \frac{\hat{\tau}}{\eps^2}   \,  \partial_t V  & = &    \partial_x^2 V & + \ U - V \, , \\[.2cm]
  \frac{\hat{\theta}}{\eps^2} \,  \partial_t W  & = & D^2\partial_x^2 W & + \ U - W \, ,
 \end{array}
 \right.
\end{align}
with $ x  \in \mathbb{R}, t \geq 0, U = U(x,t), V = V(x,t), W = W(x,t) \in \mathbb{R}$, and parameters $ \alpha, \beta, \gamma \in \mathbb{R}, \hat{\tau}, \hat{\theta} > 0, D > 1$\footnote{The condition $D>1$ implies that the $W$-component has the largest diffusion coefficient and its profile thus changes the slowest (as function of the spatial variable $x$), see for instance Figure~\ref{fig:front}. This condition stems from the original gas-discharge system and is not a mathematically necessary requirement, though convenient. For $D<1$ the $W$-component and the $V$-component simply interchange roles.}, as well as the singular perturbation scale $  0 < \eps \ll 1$ such that all parameters, including $\hat{\tau}, \hat{\theta}$, are $\mathcal{O}(1)$ with respect to $\eps$. 
A dimensional version of this system was introduced in the mid-nineties to study gas-discharge systems on a phenomenological level \cite[eg]{PUR,OR,SCHENK}. Afterwards, versions of \eqref{eq:three_component_system} have been studied extensively by mathematicians since it supports localized solutions that undergo complex dynamics while the model is still amendable for rigorous analysis \cite[e.g]{CHEN,CDHR15,DHK09,N03,N07,HC,HC2,H2,H3,H4,HS11,HS14,VANAG}. 
In the predecessor paper \cite{CDHR15}, it was shown that system \eqref{eq:three_component_system} supports uniformly traveling front solutions $ (U,V,W)(x,t) = (u^{\rm tf}, v^{\rm tf}, w^{\rm tf})(x-\eps^2 c t) $ that transition from the background state near $(-1,-1,-1) $ to the background state near $(1,1,1)$
if the system parameters and the velocity $ c $ satisfy the existence condition
\begin{align}\label{eq:existence_condition}
 \Gamma_\eps(c) =  \alpha  \frac{c \htau}{\sqrt{c^2\htau^2 + 4}}  + \beta  \frac{c \htheta}{D\sqrt{c^2\frac{\htheta^2}{D^2} + 4}}  + \gamma - \frac{\sqrt{2}}{3} c + h.o.t. = 0;
\end{align}
here and below `h.o.t' stands for `higher order terms', see \cite{CDHR15}.

This trivially yields $\gamma$ as a function of the remaining parameters and $c$, however, for the partial differential equation (PDE) dynamics it is decisive to view $\Gamma$ as a function of the auxiliary velocity parameter $c$. For $\gamma=0$ the existence condition is an odd function of $c$, and for $\gamma \neq 0$ parameters can always be adjusted to find a stationary front solution with $c=0$. Having in mind the symmetry breaking nature of $\gamma$ in \eqref{eq:three_component_system}, we will focus only on $\gamma=0$ for the analysis of this manuscript. From the viewpoint of the Allen-Cahn energy, $\gamma\neq0$ is a nontrivial external energy flux so that traveling fronts with nonzero velocity for $\gamma=0$ are somewhat surprising, cf.\ e.g.\ \cite{IMN,NMIF90}.
Typically, the energy flux $\gamma\neq 0$ is transferred to interface motions, which we find can also be oscillatory due to the coupled fields. 
It is well known that these stationary front solutions can undergo stationary bifurcations and the full analysis of the bifurcation structure in \cite{CDHR15} yields a (partially unfolded) butterfly catastrophe.
\begin{figure}
\begin{center}
\begin{tabular}{cc}
\includegraphics[width=0.4\textwidth]{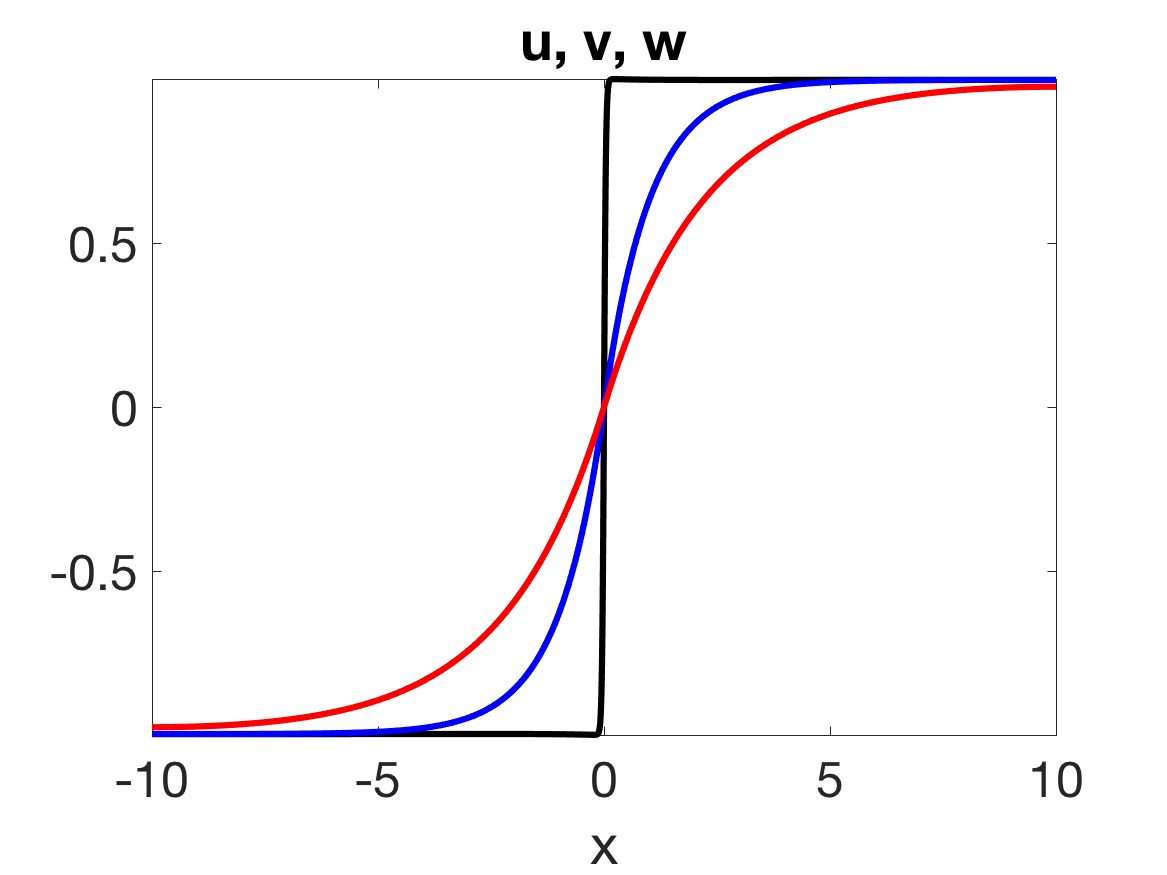}
&\includegraphics[width=0.4\textwidth]{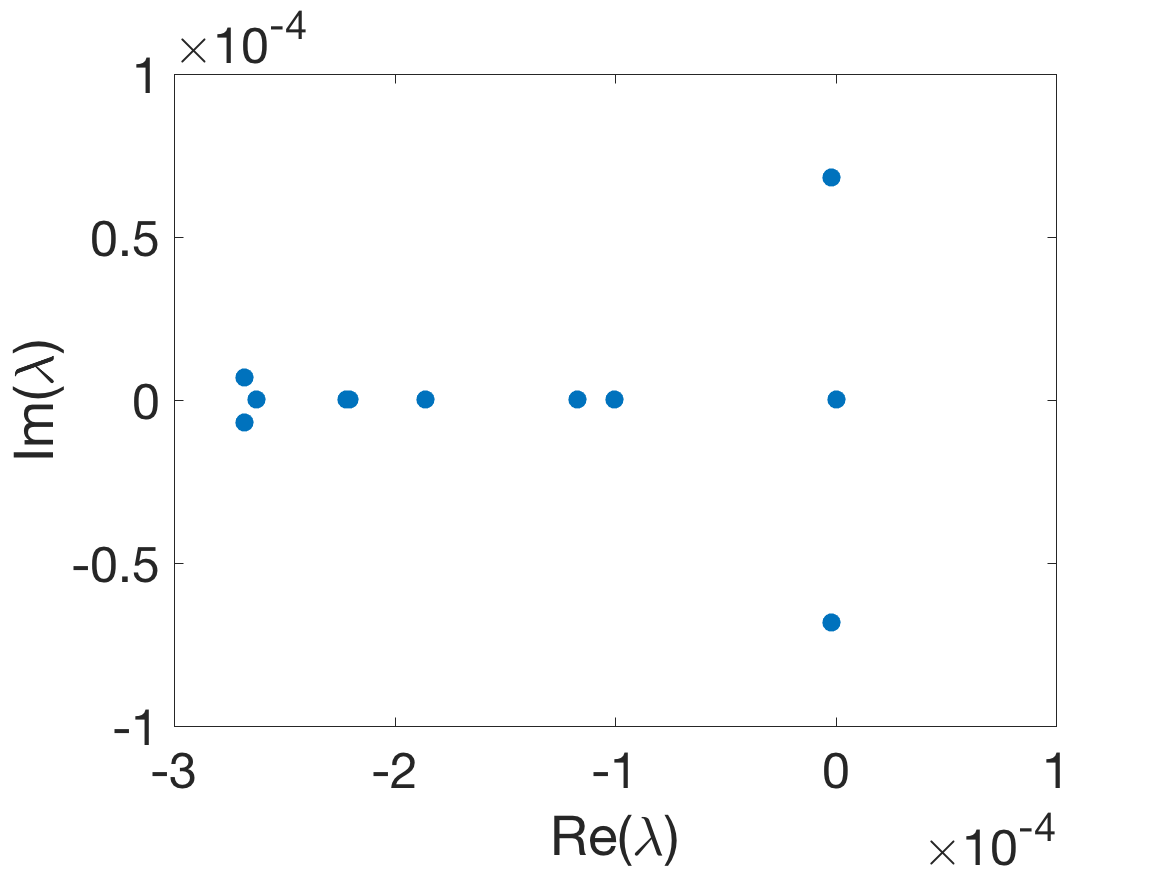}\\
(a) & (b)
\end{tabular}
\end{center}
\caption{(a) Sample profile of a numerically computed stationary front solution near a Hopf bifurcation with the profiles of $u$ (black) $v$ (blue) and $w$ (red); parameter values are those of Fig.~\ref{f:triple-in-g2} with $g_2\approx-0.003$. (b) Spectrum of this front with gap between the leading three eigenvalues as theoretically predicted.}\label{fig:front}
\end{figure}

Using geometric singular perturbation theory, the stationary front solutions to \eqref{eq:three_component_system} can be specified to leading order in the perturbation parameter $ \eps $  as
\begin{align}\label{eq:front}
\hspace{-.5cm}
\begin{array}{c}
\left[
\begin{array}{c}
U(x,t) \\[.2cm]
V(x,t) \\[.2cm]
W(x,t) 
\end{array}
\right]
\end{array}
=
\begin{array}{c}
\left[
\begin{array}{c}
u^{\rm h}(x) \\[.2cm]
v^{\rm h}(x) \\[.2cm]
w^{\rm h}(x) 
\end{array}
\right]
\end{array}
=
\begin{array}{c}
\left[
\begin{array}{c}
u_0^{\rm h}(x) \\[.2cm]
v_0^{\rm h}(x) \\[.2cm]
w_0^{\rm h}(x) 
\end{array}
\right]
\end{array}
+ h.o.t.,
\end{align}
with
\begin{align*}
\hspace{-.5cm}
\begin{array}{c}
\left[
\begin{array}{c}
u_0^{\rm h}(x) \\[.2cm]
v_0^{\rm h}(x) \\[.2cm]
w_0^{\rm h}(x) 
\end{array}
\right]
= 
\left[
\begin{array}{c}
{\rm tanh}\left[\frac{x}{\sqrt{2}\eps}\right] \\[.2cm]
0\\[.2cm]
0
\end{array}
\right]
\chi_{f}(x)
+
\displaystyle\sum_{\sigma\in\{+,-\}}
\sigma\left[
\begin{array}{c}
1 \\[.2cm]
1-e^{x} \\[.2cm]
1-e^{x/D} 
\end{array}
\right]
\chi_{s\sigma}(x)
\end{array}
\end{align*}
and where the slow/large scale and fast/small scale behavior has been captured through
\[
\chi_{s-} = \chi_{\left(- \infty, -{\sqrt{\eps}}\right)}\, , \quad  \chi_{f} = \chi_{\left[-{\sqrt{\eps}},+{\sqrt{\eps}}  \right]}\, , \quad  \chi_{s+} = \chi_{\left(+{\sqrt{\eps}}, +\infty \right)}.
\]

It has been shown in \cite{CDHR15} that the operator arising from the linearization of \eqref{eq:three_component_system} around a stationary front has the following spectral properties, also illustrated in  Figure~\ref{fig:front}(b): 
First, its essential spectrum is located in a sector of the left half plane and bounded away from the imaginary axis by $ \max\{-2, -\eps^2/\hat{\tau}, - \eps^2/\hat{\theta}\} $. 
Second, the only point spectrum that could lead to instabilities are small eigenvalues $ \lambda = \eps^2 \hat{\lambda} $. As usual for translation symmetric PDE, one such eigenvalue is $ \lambda = 0 $ with eigenfunction being 
the spatial derivative of the stationary front.
Third, the algebraic multiplicity of $ \lambda = 0 $ can only be one, two or three, see also the upcoming Proposition~\ref{proposition:evans_function}\footnote{Two of these eigenvalues have emerged from the essential spectrum upon increasing $\htau$ and/or $\htheta$ from $\mathcal{O}(\eps^{2})$ to $\mathcal{O}(1)$, see \cite{H2}.}.

In \cite{CDHR15} the nonlinear stability analysis and bifurcations of stationary fronts has been treated for the special case of unfolding around a double zero eigenvalue. The more challenging case of unfolding the triple zero was left as an open problem in \cite{CDHR15} and is the goal of the present manuscript. 
We will use center manifold analysis in the vicinity of a triple zero eigenvalue to derive the dynamics of pseudo-front solutions with non-uniform speed $c = c(t) $. 
Whilst the use of center manifold reduction is by now standard for instabilities caused by point spectrum, the main novelties of the article are as follows.

First,
although the algebraic multiplicity three of the zero eigenvalue can easily be read off from the Evans function, see Proposition~\ref{proposition:evans_function}, the corresponding eigenspace needs more analysis. Formal computations, as demonstrated in Appendix~\ref{app:formal_computation}, suggest a Jordan block of length three arises and, hence, that there are two generalized eigenfunctions. 
We confirm this by an abstract rigorous argument for the existence of generalized eigenfunctions.  
Generally, there are two different methods for solving the singularly perturbed
linearized eigenvalue problem: an analytical approach called the
Singular Limit Eigenvalue Problem (SLEP) method \cite{NF87, NMIF90} and a geometrical
approach called the Nonlocal Eigenvalue Problem (NLEP) method \cite{DGK}.
Although both methods are based on the linearized stability principle, the
former method solves the linearized eigenvalue problem directly and
derives a well-defined singular limit equation called the SLEP equation
as $\varepsilon \to 0$, while the latter method defines the Evans function \cite{AGJ}
for the linearized equations and subsequently applies a topological method to it.
The SLEP-method gives very detailed information on the behavior of the
critical eigenvalues for small $\varepsilon$, whereas the NLEP-method
can be applied to wider class of equations. Here, we use the SLEP-method to find generalized eigenfunctions corresponding to the
triple zero eigenvalue. This is slightly different from
a usual eigenvalue problem because the zero eigenvalue has been determined previously,  but it is the same in spirit:
we find the relation between the system parameters included in the original
eigenvalue problem and an eigenfunction, and this relation corresponds to an eigenvalue. This is also the crux
to finding generalized eigenfunctions, and these relations play, in essence, the role of solvability
conditions.
In fact, we expect our results can be further generalized to extensions of \eqref{eq:three_component_system} that lead to Jordan chains of arbitrary length, see \S\ref{s:conclusion}. 

Second, and more relevant for analysing the concrete PDE dynamics, we circumvent the straightforward but tedious computation of normal form coefficients of the usual center manifold reduction procedure by using the information on existence and stability of uniformly traveling fronts,  
a strategy that we believe is of interest beyond our setting. 

As a result, we obtain a reduced equation featuring a symmetric Bogdanov-Takens bifurcation scenario, and we prove its unfolding by the system parameters. Specifically, we prove that the front positions $a(t)$ satisfy an ODE of the form
\begin{equation*}
\frac{d^3}{dt^3}a = \eps^6 G(\dot a, \ddot a,\mu),
\end{equation*}
where $\mu$ combines system parameters used for unfolding the bifurcations. In case of the symmetric Bogdanov-Takens point, the subsystem for the velocities $c$, with $\eps^2 c= \dot a$, on the slow time scale $'=\eps^{-2} d/dt$ has the normal form
\begin{equation*}
c'' = g_1(\mu)c + g_{30} c^3 + c'\left(g_2(\mu) + g_{40}c^2\right)
\end{equation*}
and we give explicit formulas for $g_{30}  g_{40}\neq 0$ and the relevant expansion of $g_1, g_2$.
In particular, the unfolding generates various forms of periodic front motion.  These results are  illustrated by numerical computations as in Figure~\ref{f:numintro}, see \S\ref{s:num}. 

\begin{figure}[tbp]
\begin{center}
\begin{tabular}{ccc}
\includegraphics[width=0.3\textwidth]{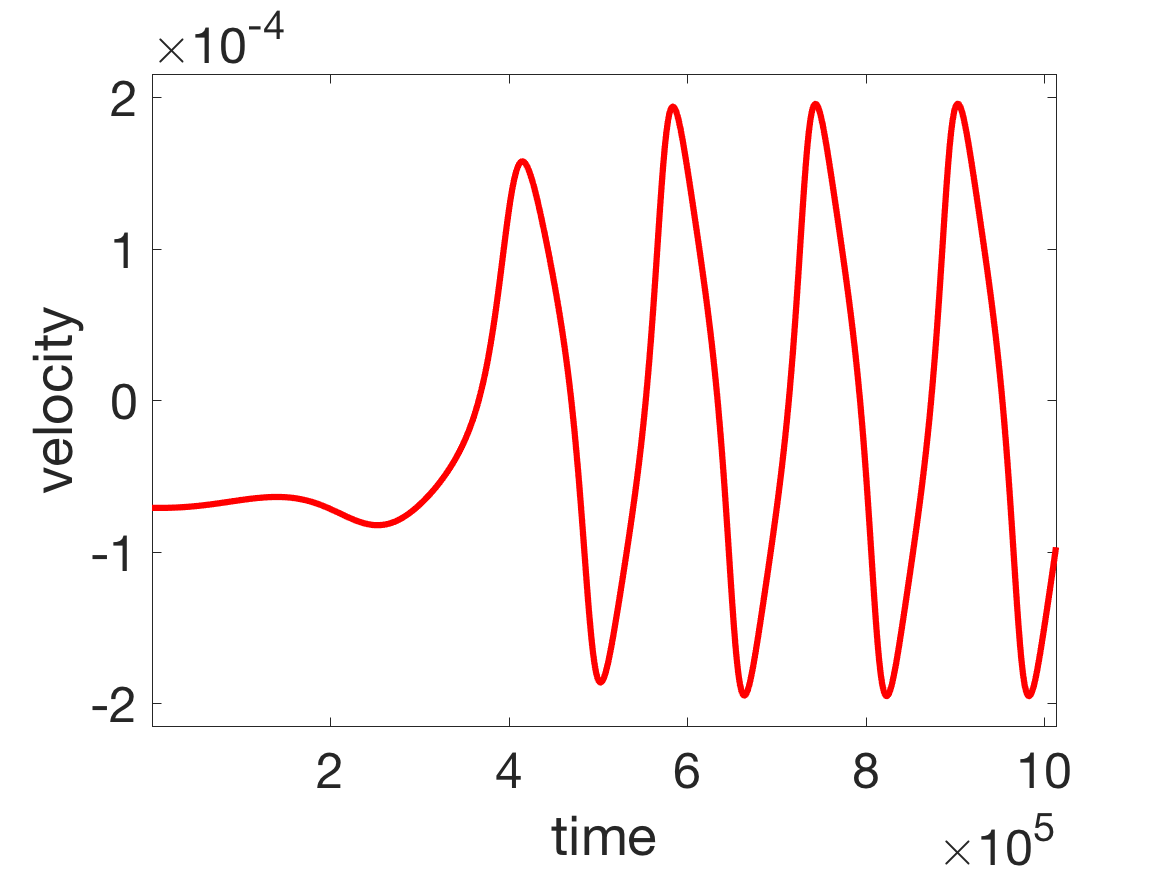}
&\includegraphics[width=0.3\textwidth]{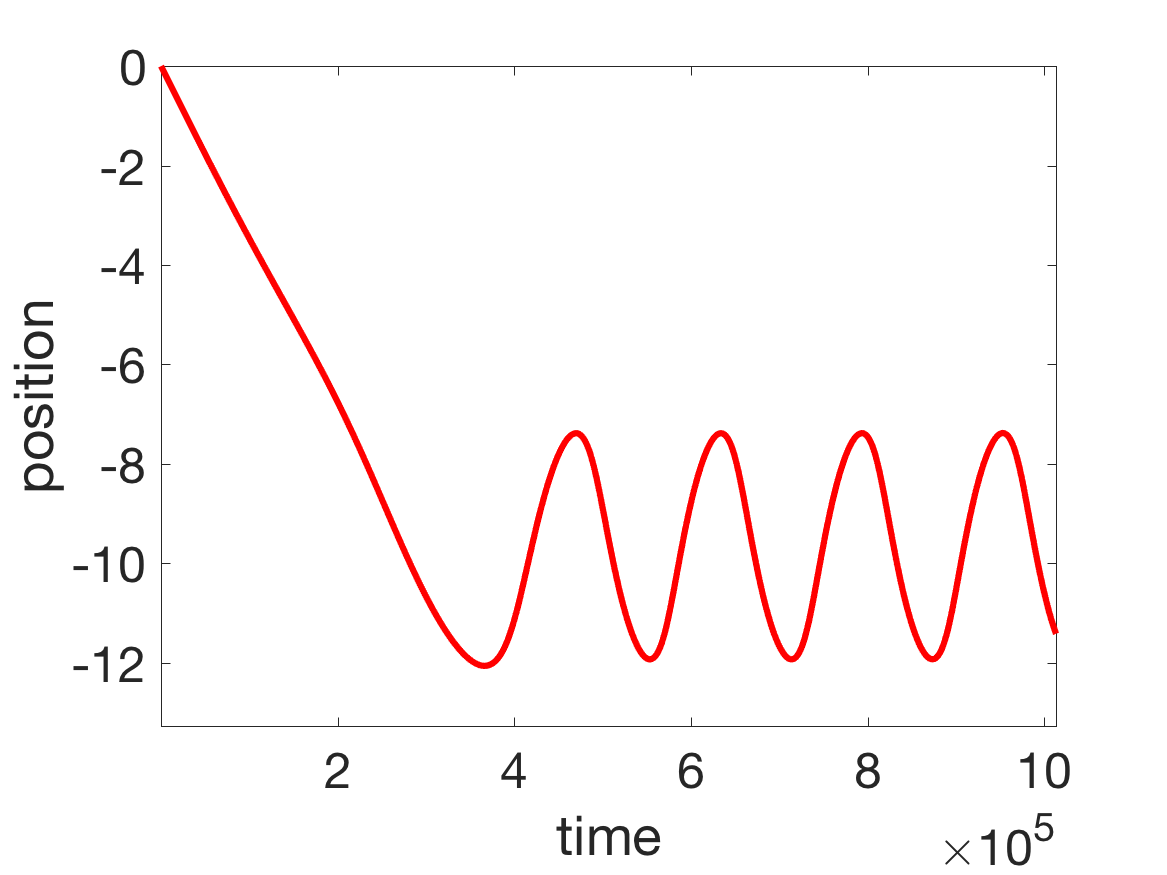}
&\includegraphics[width=0.3\textwidth]{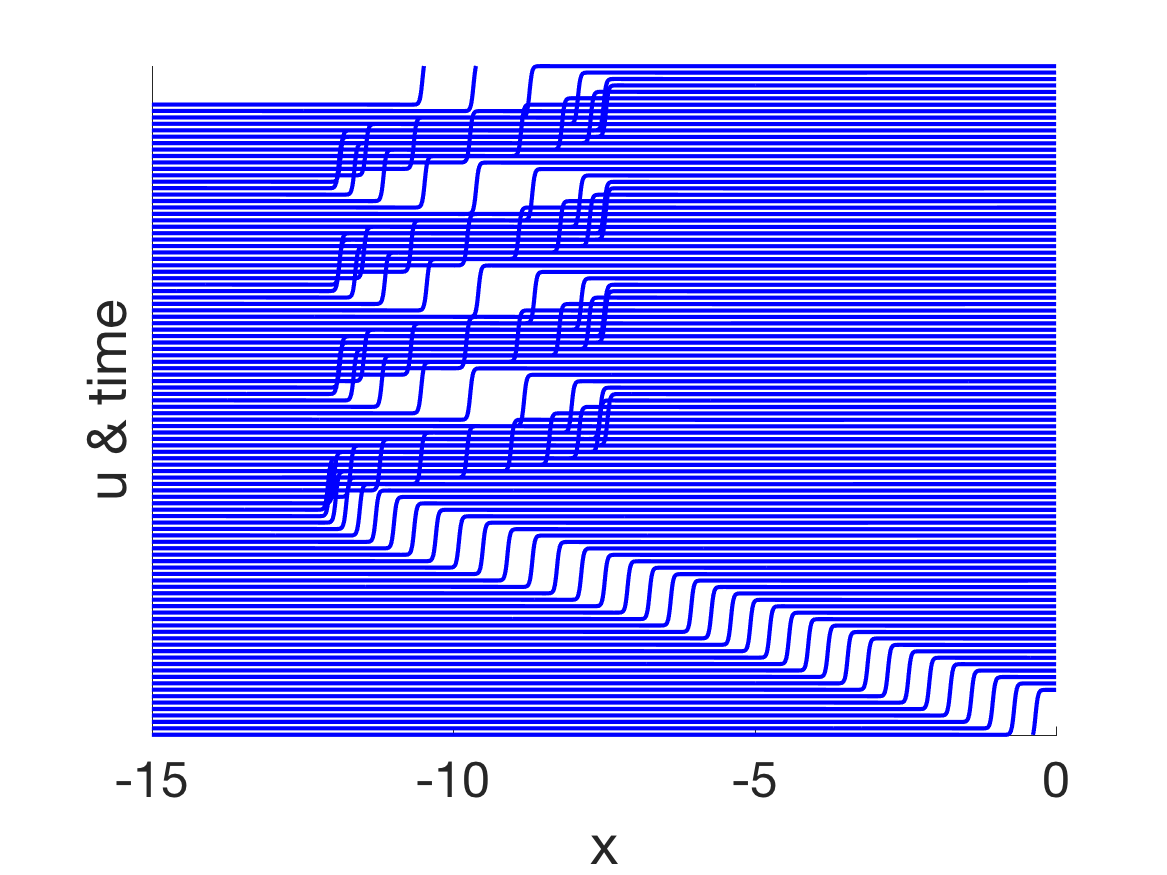}\\
(a) & (b) & (c)\\
\includegraphics[width=0.3\textwidth]{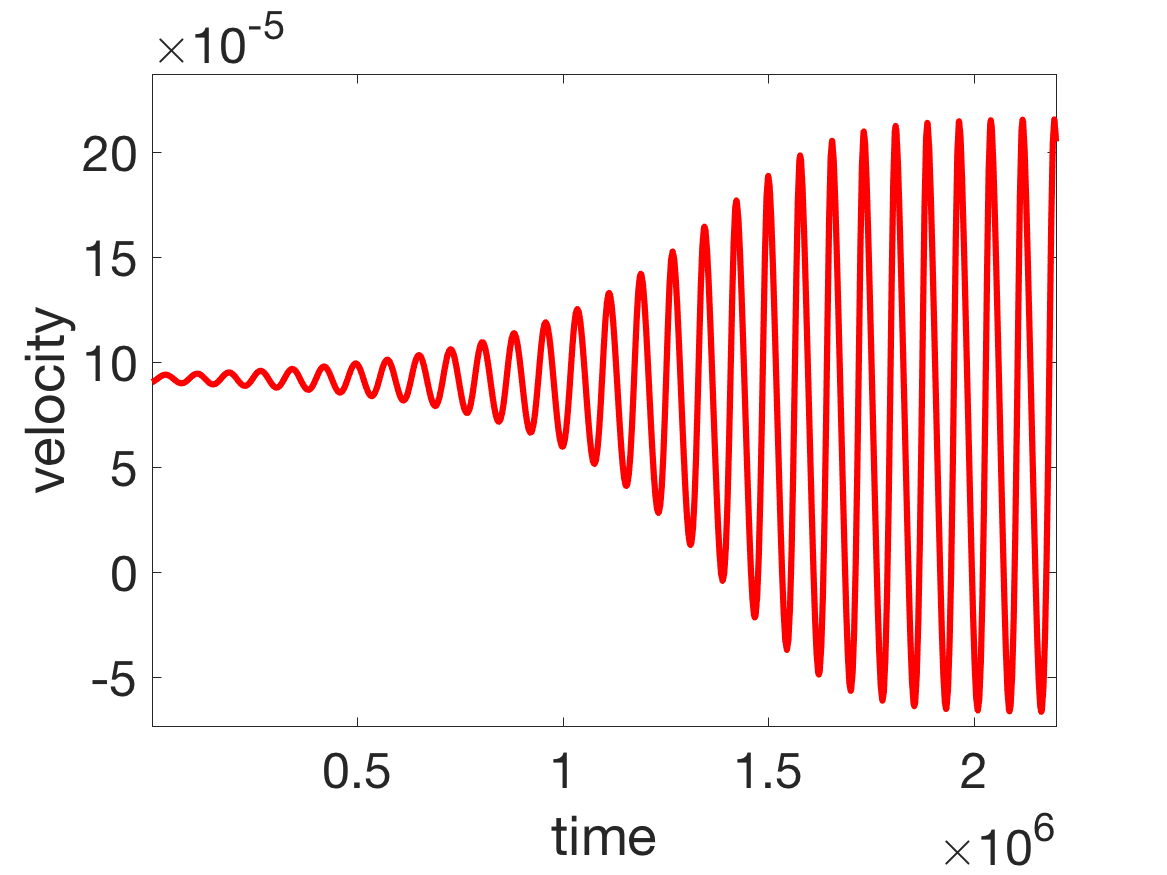}
&\includegraphics[width=0.3\textwidth]{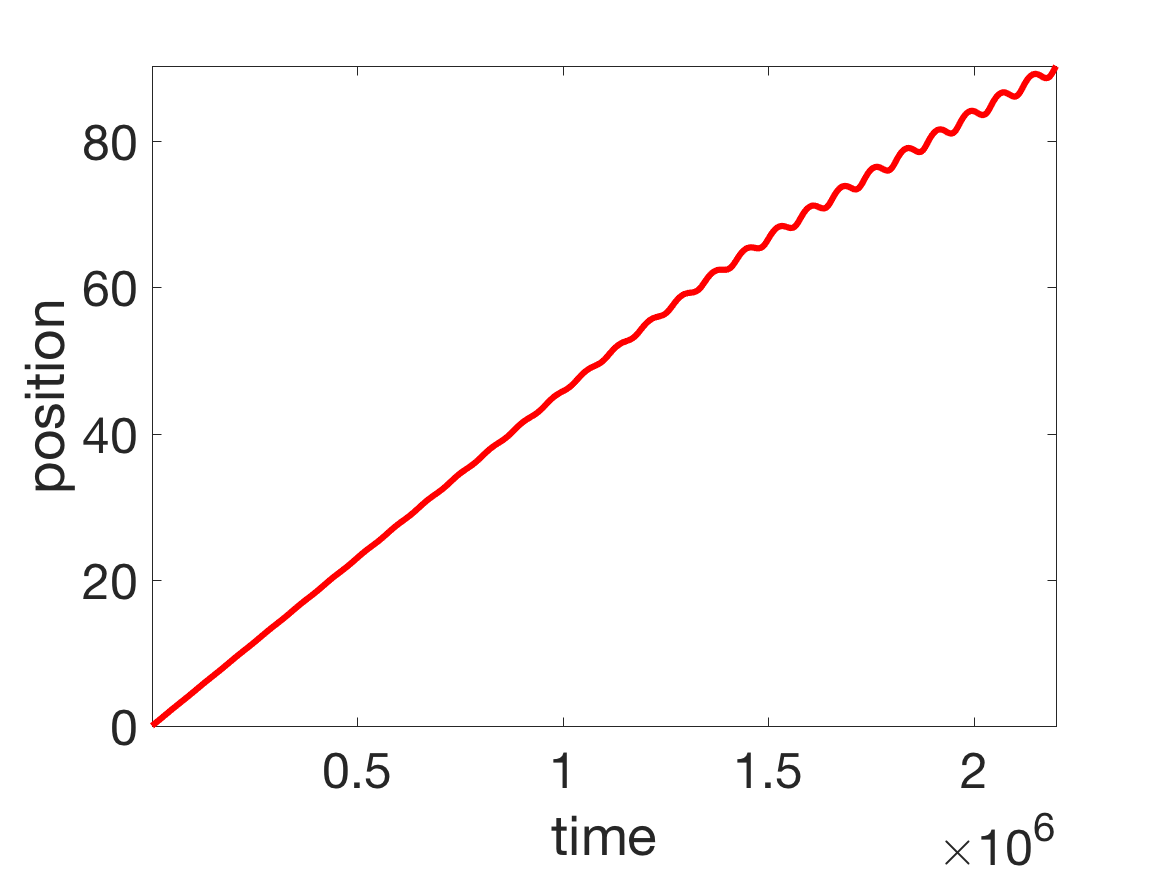}
&\includegraphics[width=0.3\textwidth]{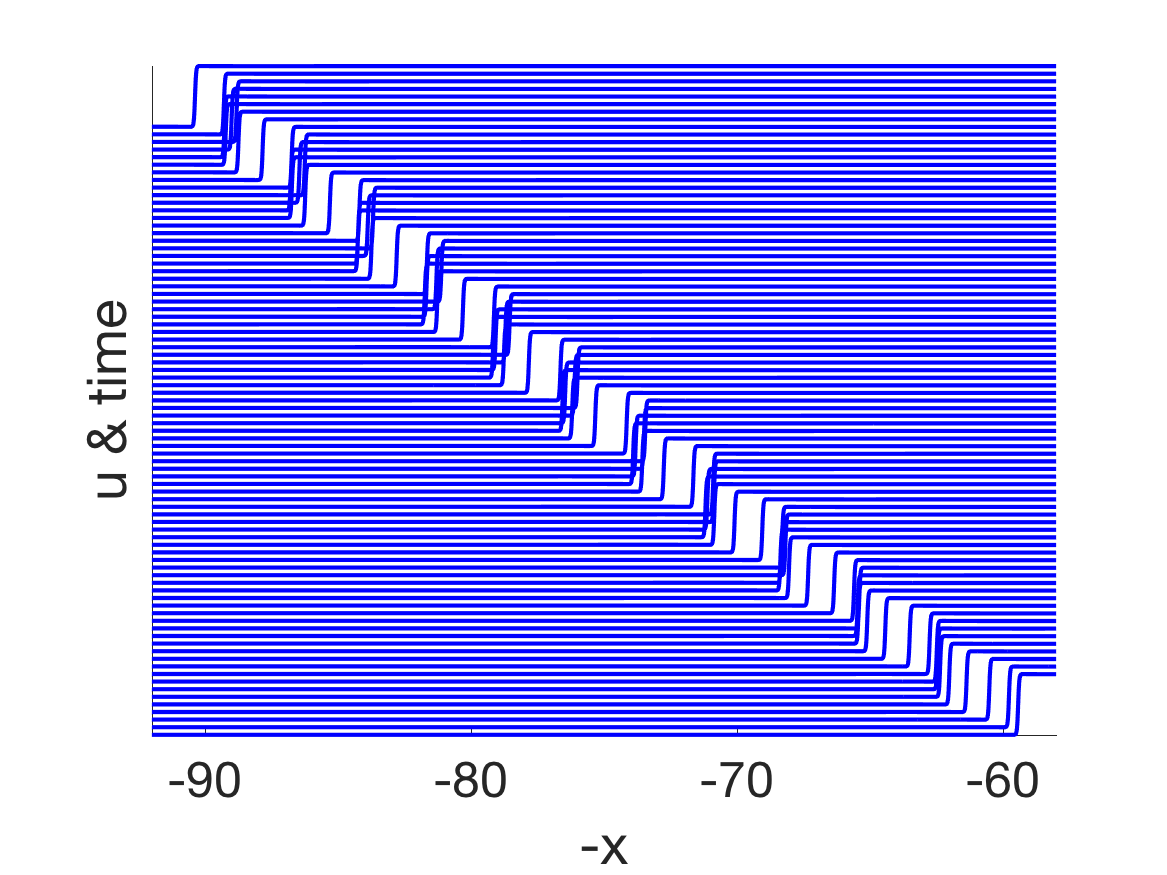}\\
(d) & (e) & (f)
\end{tabular}
\caption{Plots of velocites (a,d) and two views of positions of pseudo-fronts obtained from numerical simulations of \eqref{eq:three_component_system} for parameters located by using the center manifold analysis. The solutions correspond to heteroclinic orbits from an equilibrium to a periodic orbit in the center manifold, see \S\ref{s:num} for details. (a-c): a perturbation of an unstable stationary front leads to a periodic front motion in both velocity and position, see Figure~\ref{f:triple-in-g1-g2pos}. (d-f): non-periodic positions, i.e., a `traveling breather', occurs due to $\gamma\neq 0$, see Figure~\ref{f:movper}. (f) shows a subset from (e) and reflected for better view.\label{f:numintro}}
\end{center}
\end{figure}

This manuscript is organized as follows. In \S\ref{sec:stability_stationary_front} we discuss the results from \cite{CDHR15} and show that the operator arising from linearization around a stationary front of \eqref{eq:three_component_system} possesses a Jordan chain of length three and we compute, to leading order, the second generalized eigenfunction.
In \S\ref{sec:cmr} we use center manifold analysis in the vicinity of a triple zero eigenvalue to derive, and subsequently study, the dynamics of pseudo-front solutions with non-uniform speed $c = c(t) $. We end the manuscript with a discussion of potential directions for future work.


\section{Stability and eigenfunctions of stationary front solutions}\label{sec:stability_stationary_front}
In order to state results on the stability of stationary fronts, it is convenient to write our system \eqref{eq:three_component_system} in the more concise form
\begin{align*}
  M(\htau, \htheta) \, \partial_t Z = F(Z;\alpha,\beta, D) \, ,
\end{align*}
where $M,F$ are, with explicit $\eps$-dependence, given by
\begin{align}\label{eq:M_F}
\begin{aligned}
  M(\htau, \htheta;\eps) &=  \left(\begin{array}{ccc} 1 & 0 & 0 \\ 0 & \frac{\htau}{\eps^2} & 0 \\ 0 & 0 & \frac{\htheta}{\eps^2} \end{array} \right) \, , \quad {\textnormal{and}} 
  \qquad \qquad \qquad \qquad
  \\
  F(Z;\alpha,\beta, D;\eps) &=  \left(\begin{array}{c} \eps^2 \partial_x^2 Z^u + Z^u - (Z^u)^3 - \eps(\alpha Z^v + \beta Z^w) \\[.2cm] \partial_x^2 Z^v - Z^v + Z^u \\[.2cm] D^2 \partial_x^2 Z^w - Z^w + Z^u  \end{array} \right) \, .
\end{aligned}
\end{align}
Linearization around the stationary front $ \Zsf = (u^{\rm h}, v^{\rm h}, w^{\rm h}) $ from \eqref{eq:front} gives rise to the eigenvalue problem
\begin{align}
\nonumber
  \lambda \,  M(\htau, \htheta) \, \Phi  = \partial_Z F(\Zsf;\alpha,\beta, D) \, \Phi \,,
\end{align}
so, in the following, we will be interested in the spectrum of the operator
\begin{align}\label{eq:L}
  \calL:= M(\htau, \htheta)^{-1} \partial_ZF(\Zsf;\alpha,\beta, D) \, .
\end{align}

Various results on the critical eigenvalues and the corresponding eigenfunctions were obtained in Lemmas 5-8 and Corollary 3 from \cite{CDHR15}, which we reformulate next.
As we will see, unfolding the bifurcations can be realised with $ \alpha, \beta, D $, which is based on certain normal form coefficients introduced later. It is, however, instructive to first consider the quantities
\begin{align}\label{eq:kappas}
 \kappa^0_1 := \alpha \htau + \beta \frac{\htheta}{D} - \frac{2 \sqrt2}{3} \, , \quad  \kappa^0_2 :=  \alpha \htau^2 + \frac{\beta}{D} \htheta^2 \, , \quad \kappa^0_3 := \alpha \htau^3 + \beta \frac{\htheta^3}{D^3},
\end{align}
which already appeared \cite{CDHR15} and where the upper index $0$ refers to $\eps=0$, the limit that forms the backbone of all our computations. Statements in terms of $\kappa^0_j$, $j=1,2,3$ thus implicitly refer to the parameters $ \alpha, \beta, D, \htau, \htheta $.


\begin{proposition}[\bf Stability of stationary front solutions \cite{CDHR15}]\label{proposition:evans_function}
Let $ \eps > 0 $ be chosen sufficiently small. The critical spectrum of $\calL$ from \eqref{eq:L} on $L^2(\R)$ with domain $H^2(\R)$, or $C^0_{\rm unif}(\R)$ with domain $C^2_{\rm unif}(\R)$, consists of at most three small eigenvalues  $ \lambda = \eps^2 \hat{\lambda} + o(\eps^2)$ given by the roots of the Evans function
\begin{align}
\nonumber
 {\calD}(\hlambda) := -\frac{\sqrt 2}{3} \hlambda + \alpha \left( 1 - \frac{1}{\sqrt{\htau \hlambda + 1}} \right) + \frac{\beta}{D} \left( 1 - \frac{1}{\sqrt{\htheta \hlambda + 1}} \right) = 0 \, .
\end{align}
Furthermore, there are $\kappa_j^\eps=\kappa^0_j+O(\eps)$, $j=1,2$, as in \eqref{eq:kappas} and depending on the parameters $ \alpha, \beta, D, \htau, \htheta$, such that the following holds. For $0< \eps \ll 1$ the zero eigenvalue has multiplicity two if and only if  
\begin{align} \label{eq:double_zero}
\left\{
\begin{array}{rcl}
 \kappa_1^\eps(\alpha, \beta, D, \htau, \htheta)=0 \, ,\\
 \kappa_2^\eps(\alpha, \beta, D, \htau, \htheta)\neq 0 \, ,
\end{array}
\right.
\end{align}
while it has multiplicity three if and only if
\begin{align} \label{eq:triple_zero}
\left\{
\begin{array}{rcl}
 \kappa_1^\eps(\alpha, \beta, D, \htau, \htheta)=0 \, ,\\
 \kappa_2^\eps(\alpha, \beta, D, \htau, \htheta)= 0 \, .
\end{array}
\right.
\end{align}
\end{proposition}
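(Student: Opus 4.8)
The plan is to reduce the full singularly perturbed eigenvalue problem $\calL\Phi=\lambda\Phi$ to the scalar condition $\calD(\hlambda)=0$ in the singular limit, and then to read the multiplicities off from the order of vanishing of $\calD$ at $\hlambda=0$. First I would rescale $\lambda=\eps^2\hlambda$ and write $\Phi=(\phi,\psi,\chi)$. From \eqref{eq:M_F} the large-scale components decouple from the fast scale at leading order, solving $\partial_x^2\psi-(1+\htau\hlambda)\psi=-\phi$ and $D^2\partial_x^2\chi-(1+\htheta\hlambda)\chi=-\phi$, which I would invert explicitly via the exponential Green's functions. Since the fast component concentrates on the interface — to leading order $\phi=\partial_x u_0^{\rm h}$ with $\int\partial_x u_0^{\rm h}\,dx=2$ — the slow fields are governed near the front by their interface values $\psi(0)=(1+\htau\hlambda)^{-1/2}$ and $\chi(0)=D^{-1}(1+\htheta\hlambda)^{-1/2}$.

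Next I would impose the Fredholm solvability condition for the fast equation $L_\eps\phi=\eps^2\hlambda\phi+\eps(\alpha\psi+\beta\chi)$, where $L_\eps=\eps^2\partial_x^2+1-3(u^{\rm h})^2$ limits to the self-adjoint Allen--Cahn operator $L_0$ with kernel spanned by $\partial_x u_0^{\rm h}$. Projecting the right-hand side onto this kernel, using $\int(\partial_x u_0^{\rm h})^2\,dx=\tfrac{2\sqrt2}{3\eps}$ together with the slow-field values above, produces a scalar relation in $\hlambda$. The decisive structural point is that $\hlambda=0$ must be a root, because $\partial_x\Zsf$ is the exact translation eigenfunction; equivalently, the $\hlambda$-independent part of the solvability condition is pinned by the front existence condition \eqref{eq:existence_condition} at $c=0$. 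Enforcing this consistency converts the bare contributions $(1+\htau\hlambda)^{-1/2}$ into the differences $1-(1+\htau\hlambda)^{-1/2}$, which is precisely the form of $\calD$ and guarantees $\calD(0)=0$. I would then invoke the standard Evans-function fact that, for the full $\eps>0$ problem, the algebraic multiplicity of an eigenvalue equals the order of the corresponding zero, so that the multiplicity of $\lambda=0$ equals the order of vanishing of $\calD$ at $\hlambda=0$; the total count of small eigenvalues is at most three, namely the translation mode together with the two eigenvalues pulled out of the essential spectrum by the two slow fields.

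The multiplicity dichotomy then follows from a Taylor expansion. Using $(1+\htau\hlambda)^{-1/2}=1-\tfrac12\htau\hlambda+\tfrac38\htau^2\hlambda^2+O(\hlambda^3)$ I obtain
\begin{equation*}
\calD(\hlambda)=\tfrac12\kappa^0_1\,\hlambda-\tfrac38\kappa^0_2\,\hlambda^2+O(\hlambda^3),
\end{equation*}
so $\calD(0)=0$ identically, $\calD'(0)=\tfrac12\kappa^0_1$ and $\calD''(0)=-\tfrac34\kappa^0_2$. Hence $\hlambda=0$ is a double zero exactly when $\kappa^0_1=0$ and $\kappa^0_2\neq0$, and a triple zero exactly when $\kappa^0_1=\kappa^0_2=0$; in the latter case a short computation shows the cubic coefficient is proportional to $\tfrac{\beta\htheta^2}{D}(\htheta-\htau)$ and is therefore generically nonzero, confirming that triple is the maximal degeneracy. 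The passage from the leading conditions to the exact thresholds $\kappa^\eps_j=\kappa^0_j+O(\eps)$ is then an implicit-function/persistence argument applied to the $O(\eps)$-corrected Evans function.

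I expect the main obstacle to be the rigorous justification of the singular-limit reduction itself: showing that the roots of $\calD$ genuinely capture the small eigenvalues of $\calL$ with the claimed error $o(\eps^2)$, and, crucially, that the order of vanishing of the limiting $\calD$ equals the algebraic multiplicity for $\eps>0$. This needs the full SLEP/Evans machinery — uniform control of the fast--slow matching, Fredholm estimates on the interface layer, and a persistence argument excluding additional small eigenvalues near the origin. The delicate bookkeeping is exactly the step that converts the raw solvability condition into the precise combination $1-(1+\htau\hlambda)^{-1/2}$, thereby enforcing $\calD(0)=0$ to all orders rather than only at leading order; this is where I would rely on Lemmas~5--8 and Corollary~3 of \cite{CDHR15}.
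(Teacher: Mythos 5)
Your proposal is sound in substance, and its computations check out, but note first how the paper itself handles this statement: it gives no proof at all, since Proposition~\ref{proposition:evans_function} is a reformulation of Lemmas 5--8 and Corollary 3 of \cite{CDHR15}; the only in-text argument touching it is the SLEP-based proof of Proposition~\ref{proposition:jordan_block}, which the paper remarks ``reproves'' the proposition except for a non-degeneracy condition. Your route is the complementary Evans/NLEP one: project the fast equation onto the Allen--Cahn kernel, insert the slow-field interface values $(1+\htau\hlambda)^{-1/2}$ and $D^{-1}(1+\htheta\hlambda)^{-1/2}$, and read multiplicities from the order of vanishing of $\calD$. Your expansion $\calD(\hlambda)=\tfrac12\kappa_1^0\hlambda-\tfrac38\kappa_2^0\hlambda^2+\tfrac{5}{16}\bigl(\alpha\htau^3+\tfrac{\beta}{D}\htheta^3\bigr)\hlambda^3+O(\hlambda^4)$ is exactly right, as is the evaluation of the cubic coefficient as proportional to $\tfrac{\beta\htheta^2}{D}(\htheta-\htau)$ when $\kappa_1^0=\kappa_2^0=0$; in fact it is \emph{always} nonzero there, not merely generically, since \eqref{eq:triple_zero} forces $\beta\neq0$ and $\htau\neq\htheta$, cf.\ \eqref{e:albe0} and the paper's evaluation $a_{02}=-\tfrac{5\sqrt2}{48}\htau\htheta<0$. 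Your device of pinning the $\hlambda$-independent term by translation invariance, instead of computing the front-correction integral \eqref{eq:integral_value} as the paper's Appendix~\ref{app:formal_computation} does, is a legitimate and arguably cleaner shortcut. As for what each approach buys: the Evans route delivers the root count (``at most three'') and the dichotomy \eqref{eq:double_zero}/\eqref{eq:triple_zero} cleanly, but---as you correctly flag---it must import the Gardner--Jones identification of algebraic multiplicity with the order of vanishing \emph{and} uniform-in-$\eps$ control relating the limiting $\calD$ to the true Evans function for $\eps>0$, which is precisely the machinery of \cite{CDHR15} that you (like the paper) defer to; the SLEP route taken in \S\ref{sec:stability_stationary_front} instead constructs the (generalized) eigenfunctions directly, with $\kappa_1^0=0$ and $\kappa_2^0=0$ emerging as solvability conditions (Remark~\ref{remark:first} and \eqref{eq:slep_leading_order}), which yields the Jordan-chain information that the order of vanishing alone does not provide and which is what the center-manifold reduction actually requires.
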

Hence, only small eigenvalues $ \lambda = \eps^2 \hat{\lambda} $ can lead to instabilities, so the relevant eigenvalue problem is scaled as
\begin{align}\label{eq:evp}
  \eps^2 \hlambda \, \tilde\Phi  = \calL \, \tilde\Phi \, , 
\end{align}
with $ \mathcal{L} $ given by \eqref{eq:L}. As alluded to, without directly solving the eigenvalue problem, it is a priori clear that $ \lambda = 0 $ is an eigenvalue with eigenfunction given by $ \partial_x \Zsf $. Furthermore, by varying parameters one can increase the algebraic multiplicity for the zero eigenvalue to two. In this case, we will have a corresponding Jordan block of length two since the generalized eigenfunction $\Psi$ can be readily found from the smooth family of traveling front solutions $Z^{\rm tf}$ parameterized by the speed $c$: The existence problem $-\eps^2 c \dot{ Z^{\rm tf} } = M^{-1}F(Z^{\rm tf})$ (where differentiation is meant with respect to the traveling wave coordinate $ (x-\eps^2 c) $) implies upon differentiation and evaluation at $c=0$ that we have 
\begin{align}
\nonumber
- \eps^2 \Phi = M^{-1}\partial_Z F(\Zsf)\partial_c Z^{\rm tf}|_{c=0} + b  = \calL \Psi\,,
\end{align}
since $b=\frac{\rm d}{{\rm d} c} M^{-1}F(\Zsf)|_{c=0}=0$ at the double root, which coincides with the bifurcation point of steady states. The smoothness in $\eps$ at $\eps=0$ follows from the smoothness of $\eps^2\Phi$, so that the leading order form of the first generalized eigenfunction can also be found by performing a formal asymptotic expansion and matching, see Appendix~\ref{app:formal_computation} and, in particular, Lemma~\ref{lemma:first_generalized_eigenfunction}.

By further adjustment of the parameters the algebraic multiplicity of the zero eigenvalue can be increased to three. Formal expansions (again as performed in Appendix~\ref{app:formal_computation}) suggest the existence of a second generalized eigenfunction, since the corresponding solvability condition coincides with the triple zero eigenvalue condition. We give a rigorous proof of the occurrence of a Jordan block of length three similar to the SLEP-method, i.e., the `Singular Limit Eigenvalue Problem' as developed and used in \cite{NF87} and \cite{NMIF90}. It is quite possible that the existence of the second generalized eigenfunction can also be derived from the Evans function construction used to determine the algebraic multiplicity, though we do not pursue this here.


\begin{proposition}[Jordan block structure for the zero eigenvalue]\label{proposition:jordan_block}
Let $ \eps > 0 $ be chosen sufficiently small and $ \alpha,\beta, D, \htau, \htheta $ fulfill \eqref{eq:triple_zero} such that the zero eigenvalue of the operator
\begin{align}
\nonumber
  \calL:= M(\htau, \htheta)^{-1} \partial_ZF(\Zsf;\alpha,\beta, D) 
\end{align}
is algebraically triple. Then $\calL$ possesses a Jordan chain of length 3. Specifically,  let $ \calL^* $ be the $L^2$-adjoint operator of $ \calL $ with respect to the duality product
\begin{align}
\nonumber
 \langle Z, \widetilde{Z}\rangle = \langle Z^u, \widetilde{Z}^u\rangle_{L^2} +  \langle Z^v, \widetilde{Z}^v\rangle_{L^2} +  \langle Z^w, \widetilde{Z}^w\rangle_{L^2} \, .
\end{align}
Then there are even functions $ \Phi, \Psi, \widetilde{\Psi}, \Phi^*, \Psi^*, \widetilde{\Psi}^*$ with
\begin{align} \nonumber
 \begin{array}{lcllcllcl}
 \  \calL \Phi     &=& 0 \, , &  \   \calL \Psi  &=& \eps^2 \Phi \  \, , & \ \calL \widetilde{\Psi}     &=& \eps^2 \Psi \ \, ,\\[.1cm]
  \calL^* \Phi^* &=& 0 \, , & \calL^* \Psi^* &=& \eps^2 \Phi^* \, , & \calL^* \widetilde{\Psi}^* &=& \eps^2 \Psi^* \, . 
 \end{array}
\end{align}
In particular,
\begin{equation} \nonumber
\begin{aligned}
 \langle \Phi, \Phi^* \rangle = \langle \Phi, \Psi^* \rangle = \langle \Phi^*, \Psi \rangle = 0 \,,
\end{aligned}
\end{equation}
and for any fixed $p_1,p_2,p_3\neq 0$ the (generalized) eigenfunctions $ \Phi, \Psi, \widetilde{\Psi}, \Phi^*, \Psi^*, \widetilde{\Psi}^*$ are uniquely determined by
\begin{equation}\label{e:ortho_p123}
\begin{aligned}
 p_1 := \langle \Phi, \widetilde{\Psi}^* \rangle  , \; 
 p_2 := \langle \Psi, \widetilde{\Psi}^* \rangle , \;    
 p_3 := \langle \widetilde{\Psi}, \widetilde{\Psi}^* \rangle.
\end{aligned}
\end{equation}
Moreover, the parameters and (generalized) eigenfunctions lie in a continuous family with respect to $0\leq \eps\ll 1$.
\end{proposition}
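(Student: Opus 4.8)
The plan is to build the primal and adjoint Jordan chains explicitly through a sequence of Fredholm solvability conditions, in the spirit of the SLEP method, and to match these conditions with the parameter relations $\kappa_1^\eps=\kappa_2^\eps=0$ and the Evans-function data of Proposition~\ref{proposition:evans_function}. First I would record the Fredholm framework. Since $M$ is invertible, $\calL\Phi=0$ is equivalent to $\partial_Z F(\Zsf)\Phi=0$; because the essential spectrum is bounded away from the origin (Proposition~\ref{proposition:evans_function}), $\calL$ is Fredholm of index zero near $0$, with a one-dimensional kernel spanned by the translational mode $\Phi=\partial_x\Zsf$ — the geometric simplicity reflecting the single fast Allen--Cahn zero mode $\mathrm{sech}^2$ to which the slow $V,W$-fields are slaved. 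Its $L^2$-adjoint $\calL^*$ then has a one-dimensional kernel spanned by some $\Phi^*$, and $\mathrm{ran}\,\calL=\{\Phi^*\}^\perp$. Since the only $x$-dependence of $\partial_Z F(\Zsf)$ enters through the even term $1-3(u^{\rm h})^2$, the operator $\calL$ commutes with the spatial reflection $x\mapsto-x$; hence the construction can be carried out in the even subspace, which forces all chain elements to be even and removes the odd-mode ambiguity.

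Next I would build the chains rung by rung. The first generalized eigenfunction with $\calL\Psi=\eps^2\Phi$ exists precisely when $\eps^2\Phi\in\mathrm{ran}\,\calL$, i.e.\ when $\langle\Phi,\Phi^*\rangle=0$; as recalled in the text this is the double-root condition, realised by $\Psi=\partial_c Z^{\rm tf}|_{c=0}$ as soon as $\kappa_1^\eps=0$. The crucial step is the second rung: $\calL\widetilde{\Psi}=\eps^2\Psi$ is solvable if and only if $\langle\Psi,\Phi^*\rangle=0$. I would evaluate this pairing in the singular limit, splitting it into fast and slow contributions; the fast ($U$) part integrates out, while the slow ($V,W$) contributions, computed from the leading exponential profiles weighted by $\htau,\htheta$, yield exactly a nonzero multiple of $\kappa_2^0=\alpha\htau^2+\tfrac{\beta}{D}\htheta^2$, with an $O(\eps)$ correction defining $\kappa_2^\eps$. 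Conceptually these two solvability conditions are the SLEP analogues of $\calD'(0)=0$ and $\calD''(0)=0$, which indeed recover $\kappa_1^0$ and (up to a constant) $\kappa_2^0$. Under \eqref{eq:triple_zero} both vanish, so $\Psi$ and $\widetilde{\Psi}$ exist and the chain has length at least three; since Proposition~\ref{proposition:evans_function} fixes the algebraic multiplicity at exactly three (equivalently $\calD'''(0)\propto\kappa_3^0\neq0$, which gives $\langle\widetilde{\Psi},\Phi^*\rangle\neq0$ so the chain cannot be extended), the generalized eigenspace is three-dimensional with a single Jordan block. The adjoint chain $\Phi^*,\Psi^*,\widetilde{\Psi}^*$ is constructed identically from $\calL^*$.

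It then remains to verify the orthogonalities and normalization. The relations $\langle\Phi,\Phi^*\rangle=\langle\Phi,\Psi^*\rangle=\langle\Phi^*,\Psi\rangle=0$ follow formally by shifting $\calL$ across the pairing: for instance $\eps^2\langle\Phi,\Psi^*\rangle=\langle\calL\Psi,\Psi^*\rangle=\langle\Psi,\calL^*\Psi^*\rangle=\eps^2\langle\Psi,\Phi^*\rangle=0$, while $\langle\Phi,\Phi^*\rangle=0$ and $\langle\Phi^*,\Psi\rangle=0$ are the two solvability conditions above. The same shifting shows the pairing matrix between the two chains is anti-triangular with all three anti-diagonal entries equal, $\langle\Phi,\widetilde{\Psi}^*\rangle=\langle\Psi,\Psi^*\rangle=\langle\widetilde{\Psi},\Phi^*\rangle=:p_1$, and $\langle\Psi,\widetilde{\Psi}^*\rangle=\langle\widetilde{\Psi},\Psi^*\rangle=p_2$, $\langle\widetilde{\Psi},\widetilde{\Psi}^*\rangle=p_3$, i.e.\ a Hankel form with $\det=-p_1^3$. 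Its nondegeneracy, again governed by $\kappa_3^0\neq0$ through $p_1\neq0$, is what pins down the construction: with the primal chain fixed by its geometric origin ($\Phi=\partial_x\Zsf$, $\Psi=\partial_c Z^{\rm tf}|_{c=0}$) up to the residual additive freedom in $\widetilde{\Psi}$, prescribing $p_1,p_2,p_3\neq0$ in \eqref{e:ortho_p123} successively fixes the scaling and the two shears of the adjoint chain (and the last primal freedom) uniquely. Finally, because every object arises from Fredholm solvability for operators depending smoothly on $\eps$ after the standard fast/slow rescaling, an implicit-function/persistence argument yields the continuous family in $0\le\eps\ll1$.

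The main obstacle is the singular-limit evaluation of $\langle\Psi,\Phi^*\rangle$ together with its $\eps$-corrections: since $\calL$ mixes the $O(\eps^2)$ fast operator with the $O(1)$ slow operators, this pairing is not a regular-perturbation integral, so one must justify passage to the limit, control the matching between the $\chi_f$ and $\chi_{s\pm}$ regions, and show that the slow part reproduces $\kappa_2^\eps$ exactly rather than merely its leading term. A secondary, more technical point is establishing geometric multiplicity exactly one rigorously in the singularly perturbed setting, so that algebraic multiplicity three genuinely forces a single length-three block.
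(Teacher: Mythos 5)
Your skeleton --- build the chains rung by rung via Fredholm solvability, check the anti-triangular pairing matrix (your determinant $-p_1^3$ is correct, and the identities $p_1=\langle\Psi,\Psi^*\rangle=\langle\widetilde\Psi,\Phi^*\rangle$, $p_2=\langle\widetilde\Psi,\Psi^*\rangle$ match the paper), and normalize via \eqref{e:ortho_p123} --- is coherent, and it is a genuinely different route from the paper's: the paper never uses the adjoint kernel to prove existence. Instead it eliminates the slow components through $\calN_\eps=\C-\eps \B T_\eps \A$, projects the fast equation onto the near-kernel mode $\phi$ of the scalar self-adjoint operator $L_\eps$ (Lemma~\ref{L4}), deduces \emph{indirectly} from Proposition~\ref{proposition:evans_function} that the coefficient $E$ multiplying the free constant $d$ in \eqref{eq:slep0} must vanish (so $E$ is never computed), and is left with the scalar SLEP equation \eqref{eq:slep}, whose $\eps\to0$ limit is $-\tfrac34\kappa_2^0$ by \eqref{eq:slep_leading_order}; a Banach fixed-point argument with contraction constant uniform in $0\le\eps\ll1$ then delivers solvability \emph{and} the continuous family simultaneously.

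However, the issues you defer are not technical residue; they are the substance of the proof, and there is a third one. (i) Geometric simplicity of the kernel is load-bearing, not ``secondary'': your Fredholm alternative reduces solvability of $\calL\widetilde\Psi=\eps^2\Psi$ to the single condition $\langle\Psi,\Phi^*\rangle=0$ only if $\ker\calL^*$ is one-dimensional, and your linear-algebra step (algebraic multiplicity three forces a single $3$-block) needs geometric multiplicity one just as much; with a two-dimensional kernel both arguments collapse at the start. The paper never needs this as an input: it constructs $\Phi,\Psi,\widetilde\Psi$ directly, and simplicity of the kernel inside the critical spectral subspace then follows \emph{a posteriori} from algebraic multiplicity three. (ii) The singular-limit evaluation of $\langle\Psi,\Phi^*\rangle$ requires the adjoint eigenfunction to leading order with rigorous error control; $\calL^*=(\partial_ZF)^*M^{-1}$ has its own singular structure, the pairing mixes an $O(\eps^{-1})$-concentrated fast part against $O(1)$ slow tails, and justifying the passage to the limit is exactly the kind of statement Lemma~\ref{l:slepcont} provides for the paper's (different) scalar reduction --- note that $T_\eps\to-\tfrac12$ only from $X\cap H^2$ to $L^2$, not in operator norm, so naive expansion is not justified. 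Moreover, even granting the limit $\propto\kappa_2^0$, the proposition needs the condition at $\eps>0$: identifying your $O(\eps)$ correction with the $\kappa_2^\eps$ of Proposition~\ref{proposition:evans_function} is done in the paper not by computation but by local uniqueness of the parameter family inside its fixed-point argument --- an argument your plan would also need. (iii) The closing continuity claim cannot come from ``operators depending smoothly on $\eps$ after fast/slow rescaling'' plus an implicit function theorem: the perturbation is singular and your Fredholm constants degenerate with the $O(\eps^2)$ spectral gap, so fixed-$\eps$ persistence says nothing at $\eps=0$; the uniform-in-$\eps$ contraction in the paper's fixed-point step is precisely what closes this. Finally, two smaller points: once you invoke Proposition~\ref{proposition:evans_function} to cap the multiplicity at exactly three (as you do), your singular-limit computation becomes redundant for existence, so all the real weight falls on (i) and (iii); and your parenthetical ``$\calD'''(0)\propto\kappa_3^0$'' is off --- the cubic Taylor coefficient of $\calD$ is proportional to $\alpha\htau^3+\beta\htheta^3/D$, not to $\kappa_3^0=\alpha\htau^3+\beta\htheta^3/D^3$, and non-extendability of the chain ($p_1\neq0$) already follows from the multiplicity cap in Proposition~\ref{proposition:evans_function} without any condition on $\kappa_3^0$.
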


Note that the Jordan chain relations imply $p_1 = \langle \Psi, \Psi^* \rangle = \langle \widetilde{\Psi}, \Phi^* \rangle$ and $p_2=\langle \widetilde{\Psi}, \Psi^* \rangle$.

\medskip
The following subsection forms the proof in several steps, which in fact reproves Proposition~\ref{proposition:evans_function} with the SLEP-approach except for a non-degeneracy condition.


\subsection{Existence of a second generalized eigenfunction (Proof of Proposition~\ref{proposition:jordan_block})}
Recall that the existence of an eigenfunction and a first generalized eigenfunction is already settled for $\kappa_1^\eps=0$, so there exist $ \Phi, \Psi $ with $ \calL \Phi = 0, \calL \Psi = \eps^2 \Phi $ (with leading order expressions given in Appendix~\ref{app:formal_computation}). Hence, all we need to demonstrate is the existence of a second generalized eigenfunction $ \widetilde{\Psi} $ with 
\begin{align}\label{eq:existence_problem_second}
 \calL \widetilde{\Psi}  = \eps^2 \Psi \, .
\end{align}
Remark that $p_1,p_2,p_3\neq0$ in \eqref{e:ortho_p123} are the normalization constants of the generalized eigenfunctions, which we keep unspecified for now.

Upon introducing the notation
\begin{align} \nonumber
  \widetilde{\Psi}_{v, w} =  \begin{pmatrix}  \widetilde{\Psi}_{v}\\ \widetilde{\Psi}_{w} \end{pmatrix} \, , \qquad   \Psi_{v, w} =  \begin{pmatrix} \htau \Psi_{v}\\ \htheta \Psi_{w} \end{pmatrix} \, ,
\end{align}
equation \eqref{eq:existence_problem_second} can be cast in terms of a block matrix operator as
\begin{equation}\label{e:block}
\begin{pmatrix} L_\eps & \eps \A\\ \B & \C\end{pmatrix}\begin{pmatrix} \widetilde{\Psi}_u \\ \widetilde{\Psi}_{v, w} \end{pmatrix}=
\begin{pmatrix} \eps^2 \Psi_u \\ \Psi_{v,w}\end{pmatrix} \, ,
\end{equation}
with the differential and multiplication operators
\begin{align} \nonumber
 L_\eps &:= \eps^2 \partial_x^2 + 1 - 3 u^{\rm h}(x)^2 \, , \quad \C:=\mathrm{diag}(\partial_x^2-1,D^2\partial_x^2-1) \, , \quad \\ \nonumber \A &:= \begin{pmatrix} - \alpha & -\beta \end{pmatrix} \, , \quad  \B := \begin{pmatrix} 1 \\ 1 \end{pmatrix} \, .
\end{align}
We have $ L_\eps: H^2 \subset L^2 \longrightarrow L^2, S: H^2 \times H^2 \subset L^2  \times L^2 \longrightarrow L^2  \times L^2 $, and $ \A: L^2 \times L^2 \longrightarrow L^2, \B :  L^2 \longrightarrow L^2 \times L^2 $, where we suppressed the spatial domain $\R$ in each case.


\begin{lemma}[Spectrum of the operator $ L_\eps $]
\label{L4}
The operator $L_\eps: H^2 \subset L^2 \longrightarrow L^2$ is self-adjoint with maximal eigenvalue 
\begin{align}
\nonumber
\mu_\eps= \eps^2\widetilde \mu_\eps = \calO(\eps^2) \, , {\rm where} \quad  \lim_{\eps \to 0} \widetilde \mu_\eps = \widetilde\mu_0=\frac{3\sqrt{2}}{2}\left(\alpha + \frac{\beta}{D}\right) \,,
\end{align}
and with corresponding eigenfunction $\phi=\phi_\eps = \eps^{-1}\phi_0(x/\eps) + \calO(\eps)$, $\phi_0 = (\sqrt{2}/2) \mathrm{sech}^2(\cdot/\sqrt{2})$.
\end{lemma}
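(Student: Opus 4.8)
The plan is to treat $L_\eps=\eps^2\partial_x^2+(1-3u^{\rm h}(x)^2)$ as a semiclassical Schr\"odinger operator, fix its ground state at leading order, and then pin down the $\calO(\eps^2)$ eigenvalue shift through an \emph{exact} solvability identity obtained by differentiating the front equation, rather than by solving for the front correction.

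\emph{Self-adjointness and spectral structure.} First I would observe that $L_\eps$ is the sum of the self-adjoint operator $\eps^2\partial_x^2$ on $H^2(\R)$ and a bounded real multiplication operator, hence self-adjoint on $H^2\subset L^2$ by Kato--Rellich. Since $u^{\rm h}(x)\to\pm1$ as $x\to\pm\infty$, the potential tends to $1-3=-2$, so Weyl's theorem confines the essential spectrum to $(-\infty,-2+o(1)]$; everything above it is discrete of finite multiplicity. Thus the top of the spectrum is a genuine isolated eigenvalue $\mu_\eps$, and since $u^{\rm h}$ is odd the potential $1-3(u^{\rm h})^2$ is even, so the principal (nodeless) eigenfunction $\phi_\eps$ is simple and may be taken positive and even (Sturm--Liouville/Perron--Frobenius). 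Rescaling $x=\eps y$ turns $L_\eps$ into the regular operator $\mathcal{L}_0:=\partial_y^2+1-3\tanh^2(y/\sqrt2)+\calO(\eps)$, the linearized Allen--Cahn operator, whose maximal eigenvalue is $0$ with nodeless eigenfunction $\phi_0=\tfrac{\sqrt2}{2}\mathrm{sech}^2(y/\sqrt2)=\partial_y\tanh(y/\sqrt2)$ (the translation mode). Hence $\mu_\eps\to0$ and, back in $x$, $\phi_\eps=\eps^{-1}\phi_0(x/\eps)+\calO(\eps)$, the prefactor $\eps^{-1}$ chosen so that the leading term agrees with $\partial_x u^{\rm h}$.

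\emph{The solvability identity.} To get the precise $\eps^2$-coefficient I would avoid computing the front correction and instead differentiate the stationary front equation $\eps^2 u^{\rm h}_{xx}+u^{\rm h}-(u^{\rm h})^3=\eps(\alpha v^{\rm h}+\beta w^{\rm h})$ (recall $\gamma=0$) in $x$, which gives the exact relation $L_\eps\,\partial_x u^{\rm h}=\eps(\alpha\,\partial_x v^{\rm h}+\beta\,\partial_x w^{\rm h})$. Pairing with $\phi_\eps$ and using self-adjointness, $\langle L_\eps\phi_\eps,\partial_x u^{\rm h}\rangle=\mu_\eps\langle\phi_\eps,\partial_x u^{\rm h}\rangle$, yields
\[
\mu_\eps=\frac{\eps\,\langle\phi_\eps,\ \alpha\,\partial_x v^{\rm h}+\beta\,\partial_x w^{\rm h}\rangle}{\langle\phi_\eps,\ \partial_x u^{\rm h}\rangle}.
\]
Then I would evaluate both pairings at leading order using $\phi_\eps=\partial_x u^{\rm h}=\eps^{-1}\phi_0(x/\eps)+\calO(\eps)$. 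The denominator is $\eps^{-1}\|\phi_0\|_{L^2}^2+\calO(\eps)$ with $\|\phi_0\|_{L^2}^2=\tfrac{2\sqrt2}{3}$. In the numerator the slow fields are essentially constant across the $\calO(\eps)$-wide layer, with layer-centre slopes $\partial_x v^{\rm h}(0)=1$ and $\partial_x w^{\rm h}(0)=1/D$; since $\int\phi_0\,dy=2$ this gives $\eps\langle\phi_\eps,\alpha\partial_x v^{\rm h}+\beta\partial_x w^{\rm h}\rangle=2\eps(\alpha+\beta/D)+o(\eps)$. Dividing,
\[
\mu_\eps=\eps^2\,\frac{3\sqrt2}{2}\Big(\alpha+\frac{\beta}{D}\Big)+o(\eps^2),
\]
so $\widetilde\mu_0=\tfrac{3\sqrt2}{2}(\alpha+\beta/D)$, and the stated continuous dependence on $0\le\eps\ll1$ follows from analytic perturbation theory for the isolated simple eigenvalue.

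\emph{Expected main obstacle.} The genuinely delicate part is the rigorous error control in the singular limit: justifying $\phi_\eps=\eps^{-1}\phi_0(x/\eps)+\calO(\eps)$ in $L^2$, and justifying that $\partial_x v^{\rm h},\partial_x w^{\rm h}$ may be replaced by their interface values $1,1/D$ with $o(1)$ error over the layer. This is exactly the matched-asymptotics bookkeeping between the exponentially localized inner eigenfunction and the slowly varying outer fields; a clean alternative is to carry out the whole argument in the rescaled variable $y=x/\eps$, where $L_\eps$ becomes a regular analytic perturbation of $\mathcal{L}_0$, and read off the $\eps^2$-coefficient from the first-order Rayleigh--Schr\"odinger term, the solvability identity above supplying that coefficient without solving the inner correction equation explicitly.
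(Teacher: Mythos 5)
Your proposal is correct and reaches the right constant, but it organizes the argument differently from the paper's proof in Appendix~B, in a way worth comparing. Both arguments rest on the same structural input, the translation mode: the paper eliminates the slow components from the full kernel equation $\calL\Phi=0$ to obtain $L_\eps\Phi_u=\eps \A\C^{-1}\B\,\Phi_u$, which is literally your identity $L_\eps\,\partial_x u^{\rm h}=\eps(\alpha\,\partial_x v^{\rm h}+\beta\,\partial_x w^{\rm h})$, since $\Phi_u=\partial_x u^{\rm h}$ and $(\partial_x v^{\rm h},\partial_x w^{\rm h})^T=-\C^{-1}\B\,\partial_x u^{\rm h}$. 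From there the routes diverge. The paper runs two parallel formal expansions in the rescaled variable $y=x/\eps$ --- one for this identity, one for the eigenvalue problem $L_\eps\phi=\eps^2\widetilde\mu_\eps\phi$ --- and combines the two solvability conditions so that the unknown quantity $\langle L_1\phi_0,\phi_0\rangle$ (which involves the $\calO(\eps^2)$ correction of the front) cancels, leaving $\widetilde\mu_0=\langle \A\overline{\C}^{-1}\B\phi_0,\phi_0\rangle/\|\phi_0\|^2$, evaluated by the Dirac-sequence argument. You instead pair the exact identity directly with the true eigenfunction $\phi_\eps$ and use self-adjointness to get the exact Rayleigh-quotient formula $\mu_\eps=\eps\,\langle\phi_\eps,\alpha\partial_x v^{\rm h}+\beta\partial_x w^{\rm h}\rangle/\langle\phi_\eps,\partial_x u^{\rm h}\rangle$, and only then expand, using the same Dirac-sequence facts (mass $\int\phi_0=2$, $\|\phi_0\|_2^2=2\sqrt2/3$, interface slopes $1$ and $1/D$). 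This buys a cleaner logical structure: no corrections $\phi_1$, $\Phi_1$ or $L_1$ ever appear, the intermediate formula for $\mu_\eps$ is exact, and all asymptotics is concentrated in two inner products; you also make explicit the qualitative spectral facts (Kato--Rellich, Weyl, simplicity/positivity/evenness of the ground state) that the paper's appendix takes for granted. Two small points to tighten. First, your formula needs $\langle\phi_\eps,\partial_x u^{\rm h}\rangle\neq0$; this follows from positivity of the ground state and monotonicity of the front, but it should be stated. Second, the rescaled potential equals $1-3\tanh^2(y/\sqrt{2})+\calO(\eps^2)$ in the layer (the front has no $\calO(\eps)$ correction there, see \eqref{eq:u_het_hot}) and is $-2+\calO(\eps)$ only in the outer regions, where $\phi_0$ is exponentially small; your blanket ``$+\calO(\eps)$'' statement should be refined along these lines when justifying the eigenfunction asymptotics. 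Neither point is a gap in substance, and the remaining issue you flag --- uniform error control in the singular limit --- is present at exactly the same (formal) level in the paper's own proof.
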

\begin{proof}
See Appendix~\ref{app:evp_perturbation}. $\hfill{\Box}$
\end{proof}
Consequently, we have the orthogonal splitting $L^2 = \mathrm{span}(\phi) \oplus X$, $X = \mathrm{range}(L_\eps)$ so that $L_\eps^{-1}:X\to X$ is bounded. The splitting is associated with the projections $P=\langle \cdot,\phi\rangle \phi$, i.e., $\mathrm{ker}(P)=X$, $\mathrm{range}(P)=\mathrm{span}(\phi)$ and the complementary projection $Q = \mathrm{Id} - P$. Hence, the `partial' resolvent
\begin{align}\label{eq:T_epsilon}
T_\eps:=L_\eps^{-1}Q:L^2\to L^2 
\end{align}
is bounded for each $\eps>0$. Furthermore, we have that $\C^{-1}:L^2 \times L^2 \to L^2 \times L^2$ is bounded and independent of $\eps$.

Let us now represent $ \widetilde{\Psi}_u \in L^2 $ according to the splitting induced by $ L_\eps $, that is,
\begin{align}\label{e:tPsiu}
 \widetilde{\Psi}_u = d \phi + Q[\widetilde{\Psi}_u] \, . 
\end{align}
Hence, the construction of $ \widetilde{\Psi}_u $ amounts to finding $ d $ and $ Q[\widetilde{\Psi}_u] $. Then \eqref{e:block} becomes
\begin{align}\label{eq:row_1}
 L_\eps \widetilde{\Psi}_u + \eps \A \widetilde{\Psi}_{v, w} = \eps^2 \Psi_u \quad \Longrightarrow \quad   d \mu_\eps\phi +   L_\eps Q[\widetilde{\Psi}_u] + \eps \A \widetilde{\Psi}_{v, w} = \eps^2 \Psi_u\,,
\end{align}
and 
\begin{align}\label{eq:row_2}
 B \widetilde{\Psi}_u + S \widetilde{\Psi}_{v, w} = \Psi_{v, w} \quad \Longrightarrow \quad   d B \phi + B Q[\widetilde{\Psi}_u] + S \widetilde{\Psi}_{v, w} = \Psi_{v, w}  \, .
\end{align}
Upon letting $ P $ and $ Q $ act on \eqref{eq:row_1} we get
\begin{align}
\label{eq:row_1_P}  d  \mu_\eps \| \phi \|^2  &= \langle \eps^2 \Psi_u- \eps \A \widetilde{\Psi}_{v, w}, \phi \rangle \, , \quad \textnormal{and}\\
\label{eq:row_1_Q}  L_\eps Q[\widetilde{\Psi}_u]  &=  Q[\eps^2 \Psi_u - \eps \A \widetilde{\Psi}_{v, w}]\, .
\end{align}
Using the definition of $ T_\eps $ from \eqref{eq:T_epsilon}, equation \eqref{eq:row_1_Q} can be rearranged to
\begin{align}\label{e:QtPsi}
  Q[\widetilde{\Psi}_u]  = T_\eps [\eps^2 \Psi_u - \eps \A \widetilde{\Psi}_{v, w}] \,. 
  \end{align}
Inserting back into \eqref{eq:row_2} gives
\begin{align} \nonumber
 d B \phi + B T_\eps [\eps^2 \Psi_u - \eps \A \widetilde{\Psi}_{v, w}]  + S \widetilde{\Psi}_{v, w} = \Psi_{v, w} \,.
\end{align}
After rearranging, this can be written as the equation for $  \widetilde{\Psi}_{v, w} $
\begin{align*}
 \mathcal{N}_\eps \widetilde{\Psi}_{v, w} := ( S- \eps B T_\eps \A )\widetilde{\Psi}_{v, w} = \Psi_{v, w} -  \eps^2 B T_\eps \Psi_u  - d B \phi   \, .
\end{align*}
Since $ \eps $ is small, $ \mathcal{N}_\eps $ is invertible, see Lemma~\ref{l:slepcont} below, and we get
\begin{align}\label{e:tPsivw}
 \widetilde{\Psi}_{v, w} = \mathcal{N}_\eps^{-1}[ \Psi_{v, w} -  \eps^2 B T_\eps \Psi_u  - d B \phi ]  \, .
\end{align}
Finally, we insert this expression into \eqref{eq:row_1_P} to get, as solvability condition for the existence problem \eqref{eq:existence_problem_second} of the second generalized eigenfunction $ \widetilde{\Psi} $, the equation
\begin{align} \nonumber
0 &= \left\langle \eps^2 \Psi_u- \eps \A \mathcal{N}_\eps^{-1}\left[  \begin{pmatrix} \htau {\Psi}_{v}\\ \htheta {\Psi}_{w} \end{pmatrix}  - \eps^2B T_\eps \Psi_u  - d B \phi \right], \phi \right\rangle -  \eps^2 d\widetilde\mu_\eps \| \phi \|^2 \nonumber \\
\label{eq:slep0}
  &= \left\langle  \eps^2\Psi_u  -\eps \A \mathcal{N}_\eps^{-1}\left[  \begin{pmatrix} \htau {\Psi}_{v}\\ \htheta {\Psi}_{w} \end{pmatrix}  - \eps^2 B T_\eps \Psi_u \right], \phi \right\rangle + d E  \, ,
\end{align}
with $E= \langle \eps \A \mathcal{N}_\eps^{-1} B \phi, \phi \rangle - \eps^2 \widetilde\mu_\eps \| \phi \|^2$.
If $E  \neq  0$, one could always -- for any parameter settings -- satisfy this solvability condition by choosing $ d $ accordingly and thus obtain a second generalized eigenfunction. However, from the Evans function we know that a multiple zero eigenvalue requires adjustment of parameters according to \eqref{eq:triple_zero}. Hence, it follows that $E=0$ so that $d$ remains unspecified, which is natural since \eqref{eq:existence_problem_second} does not determine $\widetilde{\Psi}$ uniquely; any multiple of $\Phi$ can be added to create another solution. 

Upon dividing  \eqref{eq:slep0} by $\eps$ we thus obtain the so-called SLEP-equation
\begin{align}\label{eq:slep}
0 = \langle  \eps\Psi_u  - \A \mathcal{N}_\eps^{-1}\left[  \begin{pmatrix} \htau {\Psi}_{v}\\ \htheta {\Psi}_{w} \end{pmatrix}  - \eps^2 B T_\eps \Psi_u \right], \phi \rangle  \, .
\end{align}
The existence of $\widetilde{\Psi}$ is now equivalent to solving \eqref{eq:slep}, and $\widetilde{\Psi}$ is then given by \eqref{e:tPsiu}, \eqref{e:QtPsi} and \eqref{e:tPsivw} for an arbitrary scalar $d$.
In order to characterize the solvability of \eqref{eq:slep}, we conclude as follows. We first compute the leading order in $\eps$ and continuity as $\eps\to 0$, which verifies that it coincides with the triple zero condition \eqref{eq:triple_zero}, and then use the implicit function theorem.


\begin{lemma}\label{l:slepcont}
It holds true for $ \eps \rightarrow 0 $  that $T_\eps:L^2\to L^2$ is uniformly bounded, with
$$\left(T_\eps + \frac12\right)\to 0 \, ,$$
from $X\cap H^2$ to $L^2$ and 
$$\calN_\eps^{-1}\to \C ^{-1}:L^2 \times L^2 \to H^2 \times H^2 \, .$$
\end{lemma}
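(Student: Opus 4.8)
The plan is to treat the uniform bound on $T_\eps$ as the core estimate and to obtain the other two assertions as soft consequences: the limit $T_\eps\to-\tfrac12$ on $X\cap H^2$ from an algebraic resolvent identity, and $\calN_\eps^{-1}\to\C^{-1}$ from a Neumann series. The only genuinely singular-perturbation input is a uniform spectral gap for $L_\eps$, which I establish first, and which I expect to be the main obstacle.

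\emph{Uniform boundedness of $T_\eps$.} Rescaling to the fast variable $\xi=x/\eps$ turns $L_\eps$ to leading order into the Allen--Cahn linearization $\partial_\xi^2+1-3\tanh^2(\xi/\sqrt2)$, whose spectrum is $\{0,-\tfrac32\}\cup(-\infty,-2]$ with ground state $\propto\mathrm{sech}^2(\xi/\sqrt2)\propto\phi_0$ sitting at the top eigenvalue $0$. Combined with Lemma~\ref{L4} (which identifies the maximal eigenvalue $\mu_\eps=\calO(\eps^2)$ and its spike eigenfunction $\phi$) and with the fact that $1-3u^{\rm h}(x)^2\to-2$ as $|x|\to\infty$ forces $\sigma_{\mathrm{ess}}(L_\eps)=(-\infty,-2]$ uniformly in $\eps$, I conclude that the remaining spectrum satisfies $\sigma(L_\eps)\setminus\{\mu_\eps\}\subset(-\infty,-c]$ with $c\to\tfrac32$. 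Since $P=\langle\cdot,\phi\rangle\phi$ and $Q=\mathrm{Id}-P$ are the spectral projections of the self-adjoint operator $L_\eps$, they commute with $L_\eps$, and $T_\eps=L_\eps^{-1}Q$ acts as the inverse of $L_\eps$ restricted to $\mathrm{range}(Q)=\phi^\perp$; hence $\|T_\eps\|\le 1/c=:C$ uniformly for small $\eps$.

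\emph{The limit $T_\eps\to-\tfrac12$ on $X\cap H^2$.} The mechanism is that $L_\eps$ behaves like multiplication by $-2$ on fixed smooth functions. For $g\in X\cap H^2$ set $r_\eps:=L_\eps g+2g=\eps^2 g''+3\big(1-u^{\rm h}(x)^2\big)g$; the first term is $\calO(\eps^2)\|g\|_{H^2}$ in $L^2$, while $3(1-u^{\rm h}(x)^2)=3\,\mathrm{sech}^2\!\big(x/(\sqrt2\eps)\big)+\text{h.o.t.}$ is an $L^2$-bump of size $\calO(\eps^{1/2})$, so $\|r_\eps\|_{L^2}=\calO(\eps^{1/2})\|g\|_{H^2}$ by the Sobolev embedding $H^2\hookrightarrow L^\infty$. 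Because $Q,L_\eps$ commute and $Qg=g$, $Pg=0$ on $X$, one has
\begin{equation*}
T_\eps g+\tfrac12 g = L_\eps^{-1}Q\big(g+\tfrac12 L_\eps g\big)=\tfrac12\,T_\eps r_\eps,
\end{equation*}
and the uniform bound gives $\|T_\eps g+\tfrac12 g\|_{L^2}\le\tfrac{C}{2}\|r_\eps\|_{L^2}=\calO(\eps^{1/2})\|g\|_{H^2}$, i.e.\ norm convergence $X\cap H^2\to L^2$.

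\emph{Convergence $\calN_\eps^{-1}\to\C^{-1}$.} Write $\calN_\eps=\C-\eps\B T_\eps\A=\C(\mathrm{Id}-K_\eps)$ with $K_\eps:=\eps\,\C^{-1}\B T_\eps\A$. Since $\C^{-1}:L^2\times L^2\to H^2\times H^2$ is bounded independently of $\eps$ and $\|T_\eps\|\le C$, we get $\|K_\eps\|_{L^2\times L^2\to L^2\times L^2}=\calO(\eps)$, so $(\mathrm{Id}-K_\eps)^{-1}$ exists and is uniformly bounded on $L^2\times L^2$ by a Neumann series; moreover $K_\eps:L^2\times L^2\to H^2\times H^2$ has norm $\calO(\eps)$, because its leftmost factor $\C^{-1}$ maps into $H^2\times H^2$. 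Hence
\begin{equation*}
\calN_\eps^{-1}-\C^{-1}=\big[(\mathrm{Id}-K_\eps)^{-1}-\mathrm{Id}\big]\C^{-1}=K_\eps(\mathrm{Id}-K_\eps)^{-1}\C^{-1},
\end{equation*}
which is $\calO(\eps)$ as a map $L^2\times L^2\to H^2\times H^2$ and therefore tends to $0$. The delicate point remains the uniform gap of the first step: Lemma~\ref{L4} only pins down the top eigenvalue, whereas boundedness of $T_\eps$ requires that \emph{all} of $\sigma(L_\eps)$ below the spike eigenvalue stay away from $0$ as $\eps\to0$ — so the boundary-layer eigenvalue must be tracked to its Allen--Cahn limit $-\tfrac32$ while the essential spectrum stays at $-2$. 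Once this is secured, the remaining two assertions are purely functional-analytic.
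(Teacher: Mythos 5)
Your proposal is correct and follows essentially the same route as the paper's proof: uniform boundedness of $T_\eps$ via the fast-variable rescaling and the spectral gap of the limiting Allen--Cahn linearization; the identity $T_\eps g+\tfrac12 g=\tfrac12 T_\eps\bigl(\eps^2 g''+(a_\eps-a_0)g\bigr)$ with $a_\eps = 1-3(u^{\rm h})^2$, $a_0\equiv -2$, which is algebraically identical to the paper's manipulation $T_\eps-a_0^{-1}=a_0^{-1}T_\eps(a_0-a_\eps)-\eps^2a_0^{-1}T_\eps\partial_{xx}$, combined with the same ingredients $\|a_\eps-a_0\|_{L^2}=\calO(\eps^{1/2})$ and $H^2\hookrightarrow L^\infty$; and the same resolvent identity $\calN_\eps^{-1}-\C^{-1}=\eps\,\C^{-1}\B T_\eps\A\,\calN_\eps^{-1}$ (your Neumann series is just the explicit form of the paper's uniform-boundedness assertion) for the final claim. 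The one point you flag as delicate, namely that the entire spectrum of the rescaled operator below the spike eigenvalue stays uniformly away from zero, is treated even more tersely in the paper, which simply asserts uniform boundedness of the rescaled $T_\eps$ after the rescaling observation, so your sketch of that step is if anything more complete than the original.
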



\begin{proof}
First note that $T_\eps=L_\eps^{-1}Q$ is bounded on $L^2$ uniformly in $\eps$ since rescaling via $y=x/\eps$, which does not change the operator norm ($\|u(\cdot)\|_2 = \sqrt{\eps}\|u(\eps \cdot)\|_2$), gives $L_\eps$ as $\partial_{yy} + a_\eps(\eps y)$ with $a_\eps(x) = 1-3u^{\rm h}(x)^2$ so $a_0(x)\to -2$ as $x\to\pm\infty$ ($u^{\rm h}$ being the $u$-component of the stationary front). Hence, the rescaled $T_\eps$ is bounded uniformly in $\eps$. This implies the same for $\calN_\eps^{-1}$ so that $\calN_\eps^{-1}\to \calN_0^{-1}$ as an operator on $L^2$. Formally, $T_\eps\to T_0=a_0^{-1}\equiv -\frac 1 2$, which is however incorrect in the sense of operator converge on $L^2$ and this makes the proof a bit involved. Due to the uniform boundedness of $T_\eps$ on $L^2$ we have $T_\eps-a_0^{-1}:L^2\to L^2$ uniformly bounded in $\eps>0$, so that convergence of $(T_\eps-a_\eps^{-1})v$ with $v\in L^2$ follows from consideration of a dense subset such as $v\in H^2$.  We compute, using $T_\eps^{-1} =L_\eps$ on $X=\mathrm{range}(T_\eps)$ and that $a_0^{-1}$ commutes,
\begin{align} \nonumber
\begin{aligned} 
T_\eps-a_0^{-1} &=  a_0^{-1}  a_0 T_\eps -a_0^{-1}T_\eps T_\eps^{-1}\\
&= a_0^{-1}T_\eps (a_0 - (\eps^2\partial_{xx}+ a_\eps))\\
&=a_0^{-1}T_\eps (a_0 - a_\eps)-\eps^2 a_0^{-1}T_\eps \partial_{xx}. 
\end{aligned}
\end{align}
Since $a_\eps\to a_0$ in $L^2$ it follows that $(T_\eps-a_0^{-1})\to 0$ from $X\cap H^2$ to $L^2$ as required.

Furthermore, we have $\calN_\eps\to \calN_0=\C :H^2 \times H^2 \to L^2 \times L^2$ and for the resolvent we even have $\calN_\eps^{-1}\to \C ^{-1}:L^2 \times L^2 \to H^2 \times H^2$ since $\calN_\eps^{-1} - \C ^{-1} = \eps \C ^{-1} BT_\eps \A  \calN_\eps^{-1}$ and $ \C ^{-1} BT_\eps \A  \calN_\eps^{-1}:L^2 \times L^2\to L^2 \times L^2$ is uniformly bounded, so the claim follows from $\C ^{-1}:L^2\times L^2\to H^2\times H^2$ being bounded and constant in $\eps$. 
$\hfill{\Box}$
\end{proof}
Equipped with this, we can compute the leading order of \eqref{eq:slep} based on the following observations: Since $ \lim_{\eps \rightarrow 0} \calN_{\eps}^{-1} = S^{-1}$ and $\phi_\eps$ forms a Dirac sequence with limiting mass $\int_{\R}\phi_0 = 2 $ we have $\langle f, \phi_\eps\rangle \to 2 f(0)$ for, e.g., bounded, integrable and uniformly continuous $f$. Regarding $S^{-1}$ we use $(D^2 \partial_{xx}-1)^{-1}f = G_{D^2}*f$ with Green's function $G_{D^2}(y) = -\frac{1}{2D} \, \exp(-|y|/D)$ for localized solutions.
This immediately gives
\begin{equation}\begin{aligned}
\lim_{\eps\to 0} \langle A \calN_\eps^{-1} f,\phi\rangle &= 2(A S^{-1}[f])(0)\\&= -2 \alpha (G_1 * f_1)(0) - 2 \beta (G_{D^2} * f_2)(0) \, . \label{eq:leaging_order_term}
\end{aligned}\end{equation}
As to the leading order terms in \eqref{eq:slep}, using the leading order computation of $ \Psi $ from Lemma~\ref{lemma:first_generalized_eigenfunction} of Appendix~\ref{app:formal_computation}, we see that $\Psi_u$ is bounded so that 
\begin{align} \nonumber
\lim_{\eps\to 0} \langle \eps\Psi_u,\phi\rangle=0 \, , 
\end{align}
and also
\begin{align} \nonumber
\lim_{\eps\to 0} \langle \eps^2 \A \mathcal{N}_\eps^{-1}\left[ B T_\eps \Psi_u  \right], \phi \rangle=0 \, .
\end{align}
Hence, for the leading order analysis of the SLEP-equation \eqref{eq:slep} there is just one remaining term
\begin{align}
\nonumber
 \langle \A \mathcal{N}_\eps^{-1}\left[  \begin{pmatrix} \htau {\Psi}_{v}\\ \htheta {\Psi}_{w} \end{pmatrix} \right], \phi \rangle \, .
\end{align}

Finally, taking the explicit form  of the leading order approximation of the $ v,w $-components of $ \Psi$, see Lemma~\ref{lemma:first_generalized_eigenfunction} of Appendix~\ref{app:formal_computation}, for $f_1, f_2$ in \eqref{eq:leaging_order_term}\footnote{This amounts to solving the exact same inhomogeneous ordinary differential equations (ODEs) as in \eqref{eq:inhom_ODE} of Appendix~\ref{app:formal_computation}, whose solutions evaluated at $ x = 0 $ yield exactly \eqref{eq:inhom_ODE_sol}.} gives
\begin{align}\label{eq:slep_leading_order}
\lim_{\eps\to 0}  \langle \A \mathcal{N}_\eps^{-1}\left[  \begin{pmatrix} \htau {\Psi}_{v}\\ \htheta {\Psi}_{w} \end{pmatrix} \right], \phi \rangle = -\frac34 \left(\alpha \htau^2 + \frac{\beta}{D}\htheta^2\right).
\end{align}

In order to finish the proof of Proposition~\ref{proposition:jordan_block}, we show that \eqref{eq:slep} can be solved by an implicit function theorem: 
We seek an $\eps$-dependent family of parameters for $0\leq \eps \ll 1$ such that \eqref{eq:slep} holds, which -- in view of \eqref{eq:slep_leading_order} -- at $\eps=0$ reduces to $\kappa^0_2=0$, and this can readily be solved by adjusting parameters. For simplicity, we consider deviations from $ K= \kappa^0_2$, i.e., $K(\eps) = \kappa^0_2 + \mu(\eps) $, with  $\mu(0)=0$ so $K(0)= \kappa^0_2$. Let $h(\mu, \eps)$ denote the right hand side of \eqref{eq:slep}, i.e., we want to solve $h(\mu, \eps)=0$ for each $0\leq \eps \ll 1$ in terms of $\mu$. Notably, $h$ is continuously differentiable in $\mu$ for $0\leq \eps \ll 1$ and continuous in $\eps$ in this interval (this is pointwise for the operators) so there is continuous $\widetilde{h}$ such that
\[
h(\mu,\eps) = h(0,\eps) + h_\mu(0,\eps)\mu + \widetilde h(\mu,\eps)\mu^2 =0 \, .
\]
From $ K= \alpha \htau^2 + \beta \htheta^2/D  $ we have
\[ \partial_{K} = \htau^2 \partial_{\alpha} + \left(\htheta^2/D\right) \partial_{\beta} \]
so
\begin{align*}
\begin{aligned}
h_\mu(0,0) &= \lim_{\eps\to 0}h_\mu(0,\eps) = \partial_{K}|_{K=\kappa^0_2}
\lim_{\eps\to 0} \langle \A \mathcal{N}_\eps^{-1}\left[  \begin{pmatrix} \htau {\Psi}_{v}\\ \htheta {\Psi}_{w} \end{pmatrix} \right], \phi \rangle
\\  &= \htau^2 \left( - \frac34 \htau^2 \right) + (\htheta^2/D) \left( - \frac34 \htheta^2/D \right) \, ,
\end{aligned}
\end{align*}
which is nonzero (strictly negative) since $\htau,\htheta>0$.

By continuity $h_\mu(0,\eps)\neq 0$ for $0\leq \eps \ll1$ so that $h(\mu,\eps)=0$ is equivalent to 
\[
\mu = -h_\mu(0,\eps)^{-1}(h(0,\eps) + \widetilde h(\mu,\eps)\mu^2).
\]
This can be solved by Banach's fixed point theorem with continuous parameter since the contraction constant can be chosen uniform in $0\leq \eps\ll 1$, which yields the desired solution family $\mu=\mu(\eps)$ satisfying $h(\mu(\eps),\eps)=0$. Therefore, by local uniqueness of solutions, if $\alpha,\beta,\htau,\htheta, D$ satisfy $h(\mu(\eps),\eps)=0$ for small enough $\eps=0$ we can construct a generalized eigenfunction and also continue this, together with the parameters, uniquely in terms of $\eps$ to $\eps=0$.

This completes the proof of Proposition~\ref{proposition:jordan_block}.


\begin{remark}\label{remark:first}
The above proof works exactly the same for the existence problem of the first generalized eigenfunction $ \Psi $
\begin{align}
\nonumber
 \calL \Psi  = \eps^2 \Phi \,,
\end{align}
where $ \calL \Phi = 0 $. The SLEP equation \eqref{eq:slep} then becomes
\begin{align}\label{eq:slep_first}
 0 = \langle \eps^2 \Phi_u- \eps \A \mathcal{N}_\eps^{-1}\left[  \begin{pmatrix} \htau \Phi_{v}\\ \htheta \Phi_{w} \end{pmatrix}  - \eps^2 B T_\eps \Phi_u  \right], \phi \rangle \,.
\end{align}
Since by Lemma~\ref{lemma:eigenfunctions} of Appendix~\ref{app:formal_computation} we have that $\Phi_u = \eps^{-1}\phi_0(\cdot/\eps) + \phi_1$ for a bounded exponentially localized $\phi_1$, we have 
\begin{align} \nonumber
&\lim_{\eps\to 0} \langle \eps\Phi_u,\phi\rangle = \lim_{\eps\to 0}\eps^{-1}\langle \phi_0(\cdot/\eps),\phi_0(\cdot/\eps)\rangle = \|\phi_0\|_2^2 = \frac{2\sqrt{2}}{3} \, .
\end{align}
Moreover, for $f_1 = \htau \Phi_v = \htau \exp^{-|x|}$, $f_2 = \htheta \Phi_w = \htheta \exp^{-|x|/D}$ in \eqref{eq:leaging_order_term} we compute $(G_1 * f_1)(0)= -\frac{\htau}{2}$ and $(G_{D^2} * f_2)(0) = -\frac{\htheta}{2D}$. Therefore, \eqref{eq:slep_first} becomes
\[
0= \frac{2\sqrt{2}}{3} - \alpha\htau - \beta \htheta/D = \kappa_1^0 \, ,
\]
see \eqref{eq:kappas}, and which is precisely the leading order of \eqref{eq:double_zero} as expected. 
\end{remark}


\section{Center manifold reduction using the triple zero eigenvalue as organizing center}\label{sec:cmr}
Let us again write our original system \eqref{eq:three_component_system} in the more concise form
\begin{align}\label{e:concise}
  M(\htau, \htheta) \, \partial_t Z = F(Z;\alpha,\beta, D) \, ,
\end{align}
with $ M(\htau, \htheta) $ and $ F(Z;\alpha,\beta, D)$ as in \eqref{eq:M_F}. 
For the center manifold reduction we make an ansatz adjusted to the translation invariance of our problem, 
\begin{align}\label{eq:cmr_ansatz}
  Z(x,t) = \Zsf(x-a(t)) + \widetilde{R}(x-a(t),t) \, , 
\end{align}
with $ \Zsf = (u^{\rm h}, v^{\rm h}, w^{\rm h}) $ the stationary front (whose leading order was given in \eqref{eq:front}), so that $a$ is the position of the pseudo-front solutions. 


\begin{theorem}\label{theorem:cmr}
Let $\eps>0$ be sufficiently small and let the system parameters be a perturbation of \eqref{eq:triple_zero}. Then in a `tubular' neighborhood of the spatial translates of $ \Zsf$ in $(L^2(\R))^3$ system \eqref{e:concise}  possesses an exponentially attracting three dimensional center manifold and the reduced vector field for the center manifold variables $ (a, c, \cc) $ can be cast as follows. The position $a$ satisfies $\dot{a} = \eps^2 c$, 
which identifies $\eps^2 c$ as the velocity, and it is governed by the planar ordinary differential equations (ODE)
\begin{align}\label{eq:a_c_c_tilde_ODEs}
 \left( \begin{array}{cc} \dot{c} \\  \dot{\cc} \end{array} \right)
=
 \eps^2 \left( \begin{array}{cc} 0 & 1 \\  0 & 0 \end{array} \right)
 \left( \begin{array}{c}  c \\ \cc \end{array} \right)
 +
 \left( \begin{array}{c} 0 \\ \eps^2 G(c,\cc) \end{array} \right)
=
 \left( \begin{array}{c} \eps^2 \cc \\ \eps^2 G(c,\cc) \end{array} \right) \,,
\end{align}
where $G$ is smooth in its arguments and the parameters, and possesses the symmetry $(c,\tilde c)\to -(c,\tilde c)$.
In particular, $G(c,0)=0$ if and only if $T_\Gamma(c)=0$ to any order, where $T_\Gamma$ is the Taylor expansion of $\Gamma$ at $c=0$.
\end{theorem}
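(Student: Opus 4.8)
The plan is to combine standard center manifold theory with the Jordan-chain structure of Proposition~\ref{proposition:jordan_block}, the translation and reflection symmetries of \eqref{e:concise}, and the traveling-front existence condition \eqref{eq:existence_condition}. First I would establish the invariant manifold. By Proposition~\ref{proposition:evans_function} the spectrum of $\calL$ splits into three small critical eigenvalues $\lambda=\eps^2\hlambda+o(\eps^2)$, which collapse to the algebraically triple zero when \eqref{eq:triple_zero} holds, and a remainder lying in the open left half-plane bounded away from the imaginary axis by $\max\{-2,-\eps^2/\htau,-\eps^2/\htheta\}$. For fixed small $\eps$ this is a genuine spectral gap separating a three-dimensional center part (spanned by $\Phi,\Psi,\widetilde{\Psi}$, i.e.\ the neutral translation mode together with the two Jordan partners) from a uniformly contracting complement, so the center manifold theorem for semilinear parabolic systems produces an exponentially attracting three-dimensional manifold, smooth in the state and the parameters and fibered over the translation orbit of $\Zsf$. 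The delicate point is that the gap is only $\calO(\eps^2)$; I would rescale to the slow time $'=\eps^{-2}\,d/dt$ so that the contraction rate becomes $\calO(1)$ and all estimates are uniform in $0\le\eps\ll1$, which simultaneously yields the claimed continuous family in $\eps$.

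Next I would derive the reduced vector field. Substituting \eqref{eq:cmr_ansatz} into \eqref{e:concise}, writing $\xi=x-a(t)$ and using $F(\Zsf)=0$, $\Phi=\Zsf'$ and $\calL=M^{-1}\partial_Z F(\Zsf)$, gives $\partial_t\widetilde R=\calL\widetilde R+\dot a\,\Phi+\dot a\,\partial_\xi\widetilde R+\mathcal{Q}(\widetilde R)$, where $\mathcal{Q}$ collects the cubic nonlinearity. Translation invariance forces the reduced flow to be a skew product, so the transverse dynamics is independent of $a$. Projecting onto the adjoint chain $\Phi^*,\Psi^*,\widetilde{\Psi}^*$ using the pairings \eqref{e:ortho_p123} shows the linear part is exactly the nilpotent block appearing in \eqref{eq:a_c_c_tilde_ODEs}, dictated by $\calL\Phi=0$, $\calL\Psi=\eps^2\Phi$, $\calL\widetilde{\Psi}=\eps^2\Psi$. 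Adopting the integrator-chain coordinates $c:=\eps^{-2}\dot a$ and $\cc:=\eps^{-2}\dot c$ — legitimate because the coupling $\calL\widetilde{\Psi}=\eps^2\Psi$ makes $\dot c$ depend nondegenerately on the third amplitude, so $(a,c,\cc)$ is a chart — renders the first two equations tautological and collects all nonlinear dependence in $\dot{\cc}=\eps^2 G(c,\cc)$, equivalently the third-order scalar law $\dddot a=\eps^6 G$.

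For the symmetry I would use that, when $\gamma=0$, the reflection $\mathcal{S}:Z(x)\mapsto -Z(-x)$ maps solutions of \eqref{e:concise} to solutions and sends the heteroclinic front to a translate of itself, so it descends to the center manifold. Since $\mathcal{S}$ reverses space but not time, it carries a front at position $a$ with velocity $\eps^2c$ to one at $-a$ with velocity $-\eps^2 c$, hence acts as $(a,c,\cc)\mapsto-(a,c,\cc)$; equivariance of $\dot{\cc}=\eps^2 G$ then gives $G(-c,-\cc)=-G(c,\cc)$, the asserted symmetry, consistent with the evenness of the generalized eigenfunctions in Proposition~\ref{proposition:jordan_block}.

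Finally, for the characterization I would identify the uniformly traveling fronts of \eqref{e:concise} — relative equilibria with constant speed $\eps^2 c$ — with equilibria of the reduced flow satisfying $\cc=0$: such a point forces $\dot c=\eps^2\cc=0$ and $\dot{\cc}=\eps^2 G(c,0)=0$, i.e.\ $G(c,0)=0$. By \cite{CDHR15} these fronts exist precisely when $\Gamma_\eps(c)=0$, and the Fredholm solvability condition that produces $\Gamma_\eps$ is the very projection defining $G(c,0)$ in the reduction, so $G(\cdot,0)$ and $T_\Gamma$ agree up to a nonzero normalization at each order in $c$, giving $G(c,0)=0$ iff $T_\Gamma(c)=0$ to any order. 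The main obstacle I anticipate is not the symmetry or the coordinate bookkeeping but (i) making the reduction uniform in $\eps$ in the face of the $\calO(\eps^2)$ spectral gap, and (ii) rigorously matching the reduction's solvability condition with $\Gamma_\eps$ \emph{to all orders}, that is, showing the projection defining $G(c,0)$ genuinely equals the traveling-front solvability condition rather than merely sharing its zero set.
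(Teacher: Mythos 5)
Your proposal follows essentially the same route as the paper's proof: the same translation ansatz and projection onto the adjoint Jordan chain from Proposition~\ref{proposition:jordan_block}, the same application of the center manifold theorem for the semilinear problem, the same (near-identity) change of coordinates making $\dot a=\eps^2 c$ and $\dot c = \eps^2\cc$ hold exactly, the same reflection-symmetry argument, and the same identification of reduced equilibria with $\cc=0$ as uniformly traveling fronts to get the $G(c,0)=0 \Leftrightarrow T_\Gamma(c)=0$ characterization. Note that your anticipated obstacle (ii) is not actually an obstacle: the theorem claims only agreement of zero sets (in the sense of expansions), not equality of $G(\cdot,0)$ with $T_\Gamma$ up to normalization, and your dynamic identification of relative equilibria already delivers exactly that, which is also how the paper argues.
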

In the following, we first prove this reduction and then analyse the reduced dynamics. 


\subsection{Center manifold reduction (Proof of Theorem \ref{theorem:cmr})} 
Setting $ \eta = x-a(t) $ and substitution of \eqref{eq:cmr_ansatz} into \eqref{e:concise} gives (suppressing parameters)
\begin{align} \nonumber
 -\dot{a}(t) M \left( \Phi(\eta) + \partial_{\eta}  \widetilde{R}(\eta,t) \right) + M\partial_t \widetilde{R}(\eta,t) =  F(\Zsf(\eta) + \widetilde{R}(\eta,t)), 
\end{align}
which can be written as
\begin{align} \nonumber
 -\dot{a}(t) \left( \Phi(\eta)  + \partial_{\eta}  \widetilde{R}(\eta,t) \right)  + \partial_t \widetilde{R} =  \calL \widetilde{R} + \mathcal{N}( \widetilde{R}) \, ,
\end{align}
with $\calL, \calN$ as follows. By assumption we have $\htau=\htau_0+\check{\tau}$, $\htheta=\htheta_0+\check{\theta}$ (and likewise for $\alpha, \beta, D$), where subindex zero denotes the parameters values of a parameter set at which \eqref{eq:triple_zero} holds. Now $\calL:= M(\htau_0, \htheta_0)^{-1} \partial_ZF(\Zsf)$, which, after some simplifications, yields
\begin{align} \nonumber
\begin{aligned}
  \mathcal{N}(\widetilde{R}) &= \left(\begin{array}{ccc} 0 & 0 & 0 \\ 0 & -\frac{\check{\tau}}{\htau_0+\check{\tau}} & 0 \\ 0 & 0 & -\frac{\check{\theta}}{\htheta_0+\check{\theta}}  \end{array} \right) M(\htau_0,\htheta_0)^{-1}  \partial_ZF(\Zsf)\widetilde{R} \\&  \qquad \qquad- \left(\begin{array}{c} 3 (\Zsf)^u (\widetilde{R}^u)^2 + (\widetilde{R}^u)^3  \\ 0 \\ 0 \end{array} \right) \,.
\end{aligned}
\end{align}
Using the Jordan block structure of $ \mathcal{L} $ (as stated in Proposition~\ref{proposition:jordan_block}) we refine the ansatz to
\[
\widetilde{R}(x-a(t),t) = b(t) \Psi(x- a(t)) + \widetilde{b}(t) \widetilde{\Psi}(x- a(t)) + R(x-a(t),t),
\]
where $R$ is $L^2$-orthogonal to the adjoint generalized kernel spanned by $\Phi^*, \Psi^*, \widetilde\Psi^*$. Hence,
\begin{align}
\nonumber
  Z(x,t) = \Zsf(x-a(t)) + b(t) \Psi(x- a(t)) + \widetilde{b}(t) \widetilde{\Psi}(x- a(t)) + R(x-a(t),t) \, , 
\end{align}
and, suppressing the dependence of $\Psi, \widetilde{\Psi}$ and $R$ on $\eta$, 
\begin{align}  \label{eq:before_projection}
\begin{aligned}
&-\dot{a}(t) \left( \Phi + b(t) \Psi' + \widetilde{b}(t) \widetilde{\Psi}' + \partial_{\eta} R(t) \right) + \dot{b}(t)\Psi + \dot{\widetilde{b}}(t)\widetilde{\Psi} + \partial_t R(t) = \\
& \quad \quad \quad \quad  \eps^2 b(t) \Phi + \eps^2 \widetilde{b}(t) \Psi +  \calL R(t) + \mathcal{N}(b(t) \Psi + \widetilde{b}(t)\widetilde{\Psi} + R(t)) \, . 
\end{aligned}
\end{align}
Recall that $ \Zsf $ is odd, so $ \Phi = (\Zsf)' $ is even, and $ \Psi, \widetilde{\Psi}, \Phi^*, \Psi^*, \widetilde{\Psi}^*$ are all also even, such that  their derivatives are odd, and, hence,
\begin{align} \nonumber
 \langle \Psi', \Phi^* \rangle = \langle \widetilde{\Psi}', \Phi^* \rangle = \langle \Psi', \Psi^* \rangle = \langle \widetilde{\Psi}', \Psi^* \rangle = \langle \Psi', \widetilde{\Psi}^* \rangle = \langle \widetilde{\Psi}', \widetilde{\Psi}^* \rangle =  0 \, .    
\end{align}
Executing the projections on \eqref{eq:before_projection} (and again suppressing parameter dependence) then gives the equations on the generalized kernel as
\begin{align} \nonumber
   \begin{pmatrix}d_1 \\  d_2 \\ d_3 \end{pmatrix}\dot{a}
 +
A
 \begin{pmatrix}\dot{a} \\ \dot{b} \\ \dot{\widetilde{b}} \end{pmatrix} 
=
 \begin{pmatrix}0 &  p_1 & p_2\\ 0 & 0 & p_1 \\ 0 & 0 & 0 \end{pmatrix}
\begin{pmatrix} a \\ \eps^2 b \\ \eps^2 \widetilde{b} \end{pmatrix}
&+ \nonumber
\begin{pmatrix} \langle \mathcal{N}[ b \Psi + \widetilde{b}\widetilde{\Psi} + R], \widetilde{\Psi}^* \rangle \\  \langle \mathcal{N}[ b \Psi + \widetilde{b}\widetilde{\Psi} + R], \Psi^* \rangle \\ \langle \mathcal{N}[ b \Psi + \widetilde{b}\widetilde{\Psi} + R], {\Phi}^* \rangle \end{pmatrix}\, ,
\end{align}
with 
\begin{align} \nonumber
A:= \begin{pmatrix} p_1  & p_2 & p_3 \\  0 & p_1 & p_2 \\ 0 & 0 & p_1 \end{pmatrix},\; 
d_1:= -\langle \partial_{\eta}R, \widetilde{\Psi}^* \rangle , \; d_2:= -\langle \partial_{\eta}R, \Psi^* \rangle , \;  d_3:= -\langle \partial_{\eta}R, {\Phi}^* \rangle,
\end{align}
and $ p_1, p_2, p_3  $ as in \eqref{e:ortho_p123} of Proposition~\ref{proposition:jordan_block}. Note that $d_1,d_2,d_3\to 0$ as $R\to 0$ by integration by parts; in particular $|d_1|<1$ in the range we consider due to the ansatz \eqref{eq:cmr_ansatz} from the tubular vicinity of $\Zsf$. 

Multiplying the last equation by 
\begin{align} \nonumber
A^{-1} =  \left( \begin{array}{ccc}1/p_1 & -p_2/p_1^2 & (p_2^2 - p_1 p_3)/p_1^3 \\ 0 & 1/p_1 &  -p_2/p_1^2 \\ 0 & 0 &  1/p_1  \end{array} \right)
\end{align}
gives the first form of the reduced system
\begin{align} %
\label{eq:after_projection_and_normal_form} 
  \dot{a} \left( \begin{array}{c} q_1 \\  q_2 \\ q_3 \end{array} \right)
 +
 \left( \begin{array}{c} \dot{a} \\ \dot{b} \\ \dot{\widetilde{b}} \end{array} \right)
 = 
 \left( \begin{array}{ccc} 0 &  1 & 0\\ 0 & 0 & 1 \\ 0 & 0 & 0 \end{array} \right)
 \left( \begin{array}{c} a \\ \eps^2 b \\ \eps^2 \widetilde{b} \end{array} \right) 
 +
 \left( \begin{array}{c} N_1[b,\widetilde{b},R ] \\ N_2[b,\widetilde{b},R ] \\ N_3[b,\widetilde{b},R ] \end{array} \right) \, ,
\end{align}
where
\begin{align}
\nonumber 
 q_1:= \frac{d_1}{p_1} - \frac{d_2 p_2}{p_1^2} + d_3\frac{p_2^2 - p_1 p_3}{p_1^3} \, , \qquad q_2:= \frac{d_2}{p_1} - \frac{d_3 p_2}{p_1^2} \, , \qquad  q_3:= \frac{d_3}{p_1} \, ,
\end{align}
and
\begin{align} \nonumber 
 \left( \begin{array}{c} N_1[b,\widetilde{b},R ] \\ N_2[b,\widetilde{b},R ] \\ N_3[b,\widetilde{b},R ] \end{array} \right)
 := A^{-1} \left( \begin{array}{c} \langle \mathcal{N}[b\Psi + \widetilde{b}\widetilde{\Psi} + R ], \widetilde{\Psi}^* \rangle \\  \langle \mathcal{N}[b\Psi + \widetilde{b}\widetilde{\Psi} + R ], \Psi^* \rangle \\ \langle \mathcal{N}[b\Psi + \widetilde{b}\widetilde{\Psi} + R ], {\Phi}^* \rangle \end{array} \right) \, .
\end{align}
Observe now that the right-hand-side of these equations does not depend explicitly on $ a $. In particular, using the rescaling $(\b, \widetilde{\b}) = (b/(q_1 + 1), \tilde{b}/(q_1 + 1)) $ and denoting
\begin{align} \nonumber %
 \underline{N}_j[\b,\widetilde{\b},R ] :=  N_j[(q_1 + 1)) \b,(q_1 + 1)) \widetilde{\b},R ]\\
g(\b,\widetilde{\b},R) :=  \eps^2 \b +  \underline{N}_1[\b,\widetilde{\b},R ]/(q_1 + 1) \, ,
\end{align}
we can rewrite the system \eqref{eq:after_projection_and_normal_form} as
\begin{align} \nonumber 
\begin{array}{rcl}
   \dot{a} \quad &=&  \eps^2 \b +  \underline{N}_1[\b,\widetilde{\b},R ]/(q_1 + 1) \, , \\[.2cm]
     \left(\begin{array}{c} \dot{\b} \\ \dot{\tilde{\b}} \end{array} \right)
    &=& 
   \left(\begin{array}{cc} 0 & 1 \\ 0 & 0 \end{array} \right) \left(\begin{array}{c} \varepsilon^2 \b \\ \varepsilon^2 \tilde{\b} \end{array} \right)
   +
    \left(\begin{array}{c} 
     \underline{N}_2[\b,\widetilde{\b},R ] - q_2 g(\b,\widetilde{\b},R) \\
     \underline{N}_3[\b,\widetilde{\b},R ] - q_3 g(\b,\widetilde{\b},R) 
    \end{array} \right) \, .
\end{array}
\end{align}
The spectral properties noted in Proposition~\ref{proposition:jordan_block}, the semi-linear problem structure and smoothness of the nonlinearity  imply the existence of an exponentially attracting center manifold for $0<\eps\ll 1$ (see, e.g. \cite{HI11} Thm. 3.22). This means $
 R = H(\b, \widetilde{\b}),
$
with smooth function $H$ independent of $a$ since the right hand side is independent of $a$ (cf.\ \cite{HI11}, Thm 3.19). Hence, we get the reduced system  
\begin{align}\label{eq:a_b_b_tilde_ODEs_}
   \left\{
\begin{array}{rcl}
   \dot{a} \quad &=&  \varepsilon^2 \b + \varepsilon^2 F_1[\b, \tilde{\b}]  \, , \\[.2cm]
   \dot{\b} \quad &=&  \varepsilon^2 \tilde{\b} + \varepsilon^2 F_2[\b, \tilde{\b}]  \, , \\[.2cm]
   \dot{\tilde{\b}} \quad &=& \varepsilon^{2} F_3[\b, \tilde{\b}]  \, , \\[.2cm]   
\end{array}
     \right.
\end{align}
with
$ F_1[\b, \tilde{\b}] = \varepsilon^{-2} \underline{N}_1[\b,\widetilde{\b},H(\b, \widetilde{\b}) ]/(1+q_1), $
$ F_2[\b, \tilde{\b}] = \varepsilon^{-2}( \underline{N}_2[\b,\widetilde{\b},H(\b, \widetilde{\b}) ] - q_2 g(\b,\widetilde{\b},H(\b, \widetilde{\b})) )$, and  
$ F_3[\b, \tilde{\b}] = \varepsilon^{-2} ( \underline{N}_3[\b,\widetilde{\b},H(\b, \widetilde{\b}) ] - q_3 g(\b,\widetilde{\b},H(\b, \widetilde{\b})) )$, and
where the seemingly singular scaling of $ F_j $ will be justified in the next section. We will not explicitly compute $ F_j $ in terms of projections and the expansion of the center manifold, but rather perform another transformation that connects the coordinates on the center manifold with the velocity.

\begin{lemma}
There exists a near-identity change of variables $ (\b, \tilde{\b}) \mapsto (c, \tilde{c}) $ such that \eqref{eq:a_b_b_tilde_ODEs_} becomes
\begin{align} \nonumber 
\begin{array}{rcl}
   \dot{a} \quad &=&  \varepsilon^2 c  \, , \\[.2cm]
   \left(\begin{array}{c} \dot{c} \\ \dot{\tilde{c}} \end{array} \right)
    &=& 	
   \varepsilon^2  \left(\begin{array}{cc} 0 & 1 \\ 0 & 0 \end{array} \right)    \left(\begin{array}{c} c \\  \tilde{c} \end{array} \right)
   +
    \left(\begin{array}{c} 0 \\ \varepsilon^2 G(c, \tilde{c})  \end{array} \right) \, .
\end{array}
\end{align}
\end{lemma}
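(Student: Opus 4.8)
The plan is to define the new coordinates stage by stage so that the first two lines of \eqref{eq:a_b_b_tilde_ODEs_} become exact by construction, and then to observe that the third line is automatically forced into the claimed form. First I would absorb the $F_1$-correction into the velocity coordinate by setting
\[
  c := \b + F_1[\b,\tilde{\b}],
\]
which makes the first equation read $\dot a = \eps^2 c$ with no remainder, exactly as desired.

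Next I would differentiate $c$ along the flow and substitute $\dot{\b} = \eps^2(\tilde{\b} + F_2[\b,\tilde{\b}])$ and $\dot{\tilde{\b}} = \eps^2 F_3[\b,\tilde{\b}]$ from \eqref{eq:a_b_b_tilde_ODEs_}, obtaining
\[
  \dot c = \eps^2\Big[(1+\partial_\b F_1)(\tilde{\b} + F_2) + (\partial_{\tilde{\b}} F_1)\,F_3\Big].
\]
This suggests defining the second new coordinate as the bracketed expression,
\[
  \tilde{c} := (1+\partial_\b F_1)(\tilde{\b} + F_2) + (\partial_{\tilde{\b}} F_1)\,F_3,
\]
so that $\dot c = \eps^2 \tilde{c}$ holds with no remainder either. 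The two definitions together constitute a map $\mathcal{T}:(\b,\tilde{\b})\mapsto(c,\tilde{c})$.

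I would then verify that $\mathcal{T}$ is a near-identity local diffeomorphism. At the exact triple-zero parameter values the linear part of the nonlinearity $\mathcal{N}$ vanishes, so the $F_j$ are at least quadratic and $D\mathcal{T}|_0 = \mathrm{Id}$; for parameters that are a small perturbation of \eqref{eq:triple_zero} the linear-in-$(\b,\tilde{\b})$ contributions to the $F_j$ (inherited from the parameter-dependent linear part of $\mathcal{N}$) are small, so $D\mathcal{T}|_0$ is a small perturbation of the identity and remains invertible. The inverse function theorem then yields a smooth local inverse $(\b,\tilde{\b}) = \mathcal{T}^{-1}(c,\tilde{c})$. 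Finally, since $\tilde{c}$ is a smooth function of $(\b,\tilde{\b})$ and the reduced vector field $(\dot{\b},\dot{\tilde{\b}})$ equals $\eps^2$ times a smooth function, the chain rule gives $\dot{\tilde{c}} = \eps^2\cdot(\text{smooth in }(\b,\tilde{\b}))$; composing with $\mathcal{T}^{-1}$ turns this into $\dot{\tilde{c}} = \eps^2 G(c,\tilde{c})$ with $G$ smooth, which is the third target equation. The clean prefactor $\eps^2$ survives precisely because every right-hand side in \eqref{eq:a_b_b_tilde_ODEs_} already carries it while $\mathcal{T}$ is $\eps$-independent in form.

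The hard part will be the invertibility claim made uniform in the small parameter: one must confirm that $D\mathcal{T}$ stays close to the identity for all $0<\eps\ll1$ and for parameters in a fixed neighborhood of the triple-zero set, which requires controlling the (small) linear parts of $F_1,F_2,F_3$ and their convergence as $\eps\to 0$, drawing on the smoothness and $\eps$-dependence established in the center manifold reduction above. Everything else — the explicit change of variables, the chain-rule computation, and the inheritance of smoothness of $G$ — is routine.
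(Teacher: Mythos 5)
Your proposal is correct and takes essentially the same route as the paper: both define $c:=\b+F_1[\b,\tilde{\b}]$ so that $\dot a=\eps^2 c$ holds exactly, then declare $\tilde c$ to be precisely the expression produced by differentiating $c$ along the flow, so that $\dot c=\eps^2\tilde c$ holds with no remainder and the last equation defines the smooth $G$. The only (immaterial) organizational difference is that you invert the combined map $(\b,\tilde{\b})\mapsto(c,\tilde c)$ in one stroke via the inverse function theorem, whereas the paper performs two sequential scalar near-identity inversions ($\b=\mathcal{A}_1(c,\tilde{\b})$ first, then $\tilde{\b}=\mathcal{A}_2(c,\tilde c)$), which is the same argument in triangular form; your explicit remark on why $D\mathcal{T}$ stays near the identity (the $F_j$ having only small linear parts near the triple-zero parameter values) is in fact slightly more careful than the paper's terse ``can be inverted locally.''
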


\begin{proof}
We first make the near-identity change of variables $ c := \b+ F_1[\b, \tilde{\b}] $, which can be inverted locally to $ \b = \mathcal{A}_1(c, \tilde{\b}) $. So $ \dot{a} = \eps^2 c$, and taking a derivative gives 
\begin{align} \nonumber 
 \dot{c} = \eps^2 \tilde{\b} + \eps^2   \underline{F}_2[c, \tilde{\b}]
\end{align}
with
\begin{align} \nonumber 
\underline{F}_2[c, \tilde{\b}]
 =& F_2[\mathcal{A}_1(c, \tilde{\b}), \tilde{\b}]+  \partial_1 F_1[\mathcal{A}_1(c, \tilde{\b}), \tilde{\b}](\tilde{\b}  + F_2[\mathcal{A}_1(c, \tilde{\b}), \tilde{\b}]) \\ \nonumber&  + \partial_2 F_1[\mathcal{A}_1(c, \tilde{\b}), \tilde{\b}]F_3[\mathcal{A}_1(c, \tilde{\b}), \tilde{\b}]  \, .
\end{align}
A further near-identity change of variables by $ \tilde{c} :=  \tilde{\b} +  F_2[c, \tilde{\b}] $ (again locally invertible to $ \tilde{\b} = \mathcal{A}_2(c, \tilde{c}) $) gives $\dot{c} = \eps^2 \tilde{c}$. Finally, taking a derivative as before we obtain
\begin{align} \nonumber 
 \dot{\tilde{c}} = \eps^2  G(c, \tilde{c}) \, ,
\end{align}
where $ G $ can be specified in terms of $ \underline{F}_2, F_1, F_2, F_3 $ analogous to the previous step, though we make no direct use of this. 
$\hfill{\Box}$
\end{proof}
The advantage of this reformulation is that equilibria in these coordinates, $G(c,0)=0$, are traveling fronts with this $c$-value as its velocity to any expansion order. Regarding symmetry, the reflection symmetries of \eqref{e:concise} $x\to -x$ and $Z\to-Z$ imply that $H$ can be chosen to respect this in the reduced coordinates, which gives the claimed symmetry with respect to the reflection 
$$(\eta,a,c,\tilde c)\to -(\eta,a,c,\tilde c) \, .$$ 
This completes the proof of Theorem~\ref{theorem:cmr}.


\subsection{Dynamics of the reduced system on the center manifold} 
Since the planar ODE for the velocity $ c $ has the form \eqref{eq:a_c_c_tilde_ODEs} one can already anticipate a Bogdanov-Takens type bifurcation scenario. The type of unfolding is determined by the degeneracies in the expansion of $G(c,\cc)$ in \eqref{eq:a_c_c_tilde_ODEs}. That is,
\begin{equation}\label{e:cmfexpand}
G(c,\cc)|_{\eps=0} = g_1 c + g_3 c^3 + \sigma_1 c^5 + \cc(g_2 + g_4 c^2 + \sigma_2 c^4) + O(\cc^2) + h.o.t. \, ,
\end{equation}
where $g_j$ are functions of the system parameters. 
We will later select system parameters to unfold the bifurcation, and for now denote by $\mu$ an abstract selection of system parameters, so that $g_j=g_j(\mu)$ and $\mu=0$ is the bifurcation point.

\begin{definition}\label{DEF} Concerning the possible degeneracies we say that we have a 
\begin{itemize}
\item\textit{symmetric Bogdanov-Takens (SBT)} iff $g_1|_{\mu=0} = g_2|_{\mu=0} = 0$ and $g_3 g_4|_{\mu=0}\neq 0$, see Figure~\ref{f:unfoldsketch11};
\item\textit{symmetric Bodganov-Takens with butterfly imprint (SBTB)} iff $g_1|_{\mu=0}=g_2|_{\mu=0}=g_3|_{\mu=0}=0$ and $g_4|_{\mu=0}\neq 0$; and
\item\textit{symmetric Bodganov-Takens with degeneracy (SBTD)} iff $g_1|_{\mu=0}=g_2|_{\mu=0}=g_4|_{\mu=0}=0$ and $g_3|_{\mu=0}\neq 0$.
\end{itemize}
\end{definition}
A normal form for these cases, as well as the additional option of $g_1|_{\mu=0}=g_2|_{\mu=0}=g_3|_{\mu=0}=g_4|_{\mu=0}$, has been derived in \cite{Knobloch}. This normal form is, in the slow time scale $'=\eps^{-2} d/dt$, given by
\begin{align} \nonumber
\left\{
\begin{array}{rcl}
{\underline{c}}'  &=& \tilde{\underline{c}}\\
{\tilde{\underline{c}}}' &=& \underline{g}_1 \underline{c} + \underline{g}_3 \underline{c}^3 + \underline{\sigma}_1 \underline{c}^5 + \underline{\cc} \left( \underline{g}_2 + \underline{g}_4 \underline{c}^2 + \underline{\sigma}_2  \underline{c}^4 \right) \,,
\end{array}
\right.
\end{align}
where the underscores emphasize that, in general, an additional coordinate change is required to reach this normal form. 
However, in the SBT case this is not needed and we can also ignore $\underline{\sigma}_1$ and $\underline{\sigma}_2$. 
In contrast, in the SBTB case the additional coordinate change depends on the coefficient $\partial_c\partial_{\cc}^2G(0,0)$ \cite{Knobloch}. Since our approach does not provide access to compute $\partial_c\partial_{\cc}^2G(0,0)$, it does not allow to rigorously unfold this case. 

\begin{figure}[!ht]
\centering
\includegraphics[scale=0.9]{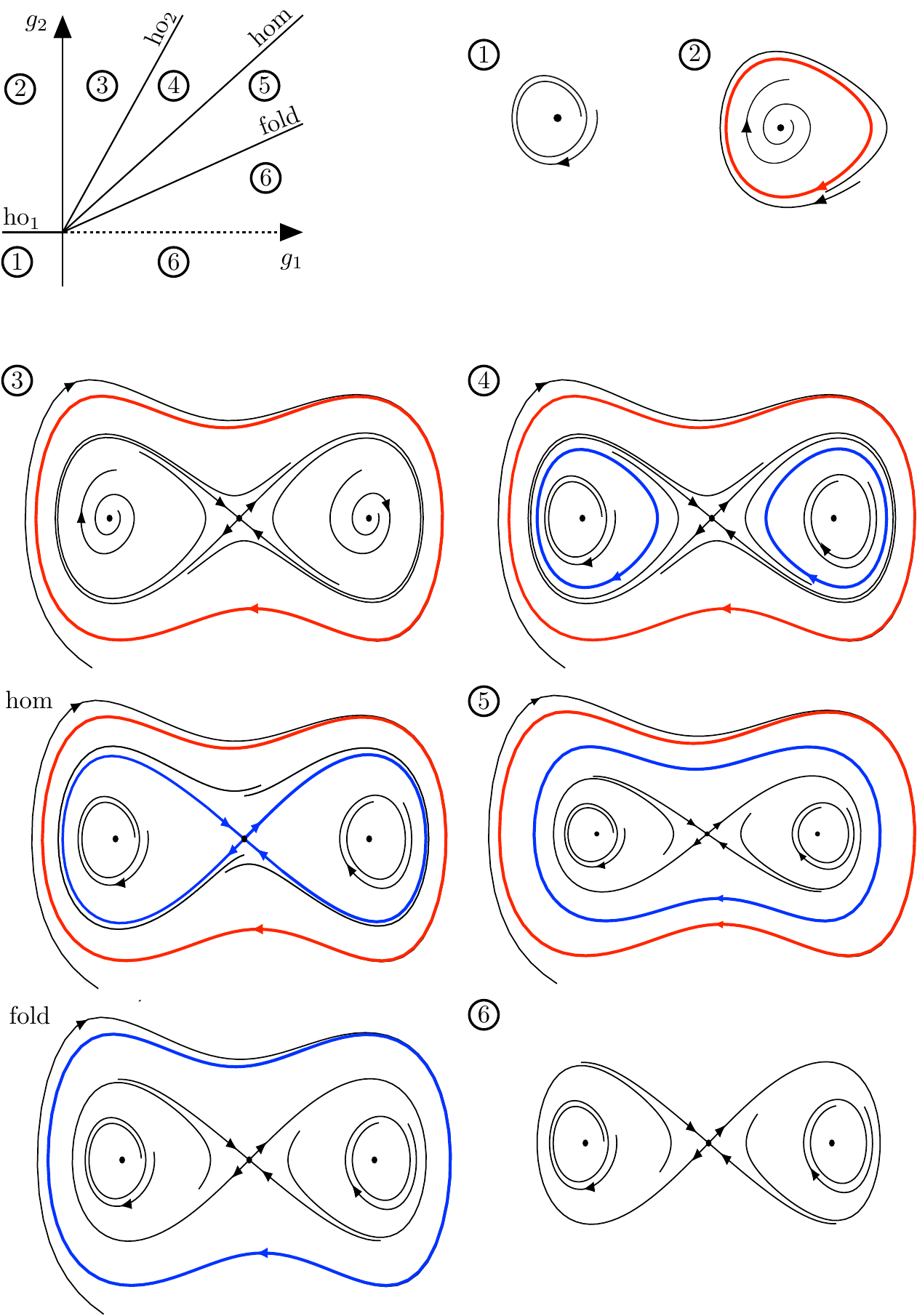}\\
\caption{Bifurcation diagram, and sketches of the associated phase planes, of a symmetric Bogdanov-Takens point in the case $g_3,g_4<0$. The periodic orbits plotted in red are stable, those in blue unstable.
See also Figures 2 and 4 from \cite{Carr}.}
\label{f:unfoldsketch11}
\end{figure}

In this paper, we focus on the SBT case, so that two parameters $\mu=(\mu_1,\mu_2)$ suffice, and follow the analysis in \cite{Carr} to check the relevant terms in the right-hand-side $ G $ in \eqref{eq:a_c_c_tilde_ODEs} directly, by explicitly computing some of its derivatives. As alluded to, in these computations 
we exploit the analytic information on the (leading order) existence condition and critical eigenvalues for uniformly traveling fronts, i.e., for the fixed points \eqref{eq:a_c_c_tilde_ODEs}. 
Since all coefficients are continuous at $\eps=0$ it suffices to focus on the leading order at $\eps=0$. For convenience, we first summarize the information needed from the existence and stability analysis, see also \cite{CDHR15}. 

The leading order in $ \eps $ of the Evans function arising from the stability analysis of uniformly traveling fronts has been determined in \cite{CDHR15} to be
\begin{align} \nonumber 
 {\calD}(\hlambda,c) = -\frac{\sqrt 2}{6} \hlambda + \alpha \widetilde{\calD}(\hlambda, c \htau, \hat{\tau})+ \frac{\beta}{D} \widetilde{\calD}\left(\hlambda, c\frac{\hat{\theta}}{D}, \hat{\theta} \right) = 0 \, ,
\end{align}
with
\begin{align} \nonumber %
  \widetilde{\calD}(\hlambda, \rho_1, \rho_2) = \left( \frac{1}{\sqrt{\rho_1^2 + 4}} -  \frac{1}{\sqrt{\rho_1^2 + 4(\hat{\lambda}\rho_2 + 1)}} \right) \, .
\end{align}


\begin{lemma}\label{lemma:taylor}
Let $ \kappa_1^0, \kappa_2^0, \kappa_3^0 $ be as in \eqref{eq:kappas}. Then the Taylor expansion in $ c = 0 $ of the leading order existence condition from \eqref{eq:existence_condition} for uniformly traveling fronts with velocity $ \eps^2 c $ for $ \gamma = 0 $ is given by
\begin{align}\label{eq:T_gamma}
 T_{\Gamma}(c) = \frac12 \kappa_1^0 c - \frac{1}{16} \kappa_3^0 c^3 + k c^5 + O(c^7) \, ,
\end{align}
where $k= \frac{3}{256} \left( \alpha \htau^5 + \beta \frac{\htheta^5}{D^5} \right)$ does not vanish if $\kappa_1^0=\kappa_3^0=0$. Furthermore, the leading order of the Evans function\footnote{Note that the Evans function and $\calE$ are meaningful only for choices of $c$ and system parameters such that a traveling front with velocity $c$ exists.} arising from the stability analysis of uniformly traveling fronts (with translational eigenvalue factored out)  has the form
\[
\calE(\hlambda,c) =  \frac{\mathcal{D}(\hlambda, c)}{\hlambda} = a_0(c)+ a_1(c)\hlambda  + a_2(c)\hlambda^2 + a_3(c)\hlambda^3 + O(\hlambda^4)
\] and  is an even function of $c$ with expansions of the coefficients given by $ a_i = \sum_{j \geq 0} a_{2j,i} \ c^{2j} $ with
\begin{align}
\left\{
\begin{array}{rclcrclcrcl}
a_{00} &=& \dfrac14 \kappa_1^0\, ,  \quad
a_{20}  =- \dfrac{3}{32}\kappa_3^0 \, , \quad
a_{01} = - \dfrac{3}{16} \kappa_2^0\, , \quad
a_{21} =   \dfrac{15}{128}\left(\alpha \htau^4 + \frac{\beta}{D^3} \htheta^4 \right) \, , \\[0.3cm]
a_{02} &=&   \dfrac{5}{32} \left( \alpha \htau^3 + \frac{\beta}{D} \htheta^3 \right) \, , \quad 
a_{03} =  -\dfrac{35}{256}  \left( \alpha \htau^4 + \frac{\beta}{D} \htheta^4 \right) \, .
\end{array}
\right. \label{eq:a_ij}
\end{align}
\end{lemma}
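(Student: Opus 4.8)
The plan is to prove both assertions as explicit Taylor expansions of the closed-form expressions quoted just before the lemma, and then to settle the two stated non-degeneracies ($k\neq0$ and the identification of the $\kappa^0_j$). For $T_\Gamma$ I would set $\gamma=0$ in the leading-order existence condition \eqref{eq:existence_condition} and note that both nonlinear terms share the shape $\tfrac{c\rho}{\sqrt{c^2\rho^2+4}}$, with $\rho=\htau$ in the first and $\rho=\htheta/D$ in the second. Pulling out $\sqrt4=2$ and using $(1+u)^{-1/2}=1-\tfrac12u+\tfrac38u^2-\cdots$ with $u=c^2\rho^2/4$ gives
\[
\frac{c\rho}{\sqrt{c^2\rho^2+4}}=\frac{\rho}{2}\,c-\frac{\rho^3}{16}\,c^3+\frac{3\rho^5}{256}\,c^5+O(c^7).
\]
Summing the $\alpha$- and $\tfrac{\beta}{D}$-weighted copies and subtracting $\tfrac{\sqrt2}{3}c$, the coefficient of $c$ collapses to $\tfrac12(\alpha\htau+\tfrac{\beta}{D}\htheta-\tfrac{2\sqrt2}{3})=\tfrac12\kappa^0_1$ (using $\tfrac14\cdot\tfrac{2\sqrt2}{3}=\tfrac{\sqrt2}{6}$), that of $c^3$ to $-\tfrac1{16}\kappa^0_3$, and that of $c^5$ to the stated $k$, which is direct matching against \eqref{eq:kappas}.

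The only step here that is not pure series bookkeeping is $k\neq0$ under $\kappa^0_1=\kappa^0_3=0$. Writing $m_1:=\alpha\htau$, $m_2:=\tfrac{\beta}{D}\htheta$, $\xi:=\htau^2$, $\zeta:=\tfrac{\htheta^2}{D^2}$ (all with $\xi,\zeta>0$), the two constraints read $m_1+m_2=\tfrac{2\sqrt2}{3}$ and $m_1\xi+m_2\zeta=0$, while $k\propto m_1\xi^2+m_2\zeta^2$. Since $\xi,\zeta>0$ and $m_1+m_2\neq0$, the second relation forces $\xi\neq\zeta$; solving the resulting $2\times2$ linear system for $(m_1,m_2)$ and substituting yields the clean identity $\alpha\htau^5+\tfrac{\beta}{D^5}\htheta^5=-\tfrac{2\sqrt2}{3}\tfrac{\htau^2\htheta^2}{D^2}<0$, so $k<0$ in particular.

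For the Evans-function part I would first record that $\widetilde{\calD}(\hlambda,\rho_1,\rho_2)$ vanishes at $\hlambda=0$, whence $\calD(0,c)=0$ and $\calE=\calD/\hlambda$ is analytic (the translational root is removed cleanly). Writing the inner radicand as $\rho_1^2+4(\hlambda\rho_2+1)=A+4\rho_2\hlambda$ with $A=\rho_1^2+4$ and expanding $(A+4\rho_2\hlambda)^{-1/2}$ binomially gives
\[
\widetilde{\calD}=\frac{2\rho_2}{A^{3/2}}\hlambda-\frac{6\rho_2^2}{A^{5/2}}\hlambda^2+\frac{20\rho_2^3}{A^{7/2}}\hlambda^3-\frac{70\rho_2^4}{A^{9/2}}\hlambda^4+\cdots.
\]
Dividing by $\hlambda$ and inserting $(\rho_1,\rho_2)=(c\htau,\htau)$ and $(c\tfrac{\htheta}{D},\htheta)$ yields closed forms for $a_0,\dots,a_3$ as combinations of $A_1=c^2\htau^2+4$ and $A_2=c^2\tfrac{\htheta^2}{D^2}+4$. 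Evenness in $c$ is then immediate, since each $a_i$ depends on $c$ only through $A_1,A_2$, i.e. only through $c^2$ (the $\rho_2$'s being $c$-independent). Finally, expanding each factor $A_j^{-(2m+1)/2}=4^{-(2m+1)/2}(1+c^2\rho^2/4)^{-(2m+1)/2}$ and reading off the $c^0$ and $c^2$ terms expresses every $a_{2j,i}$ as a weighted power sum $\alpha\htau^{\,i+1+2j}+\tfrac{\beta}{D^{\,1+2j}}\htheta^{\,i+1+2j}$; the cases $(j,i)=(0,0),(0,1),(1,0)$ reproduce $\kappa^0_1,\kappa^0_2,\kappa^0_3$ and the remaining ones give the coefficients in \eqref{eq:a_ij}. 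As a consistency check one verifies the structural identity $a_0(c)=\tfrac12\Gamma'(c)$ directly from the closed forms (using $\tfrac{d}{dc}\tfrac{c\rho}{\sqrt{c^2\rho^2+4}}=\tfrac{4\rho}{(c^2\rho^2+4)^{3/2}}$), which already fixes $a_{00}=\tfrac14\kappa^0_1$ and $a_{20}=-\tfrac3{32}\kappa^0_3$ from the first part.

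I expect no conceptual obstacle: the proof is essentially two organized binomial expansions. The genuine effort is bookkeeping — carrying the $\hlambda$- and $c$-expansions to the correct simultaneous orders and recognizing the recurring weighted power sums — while the single step that requires an actual argument rather than computation is the non-vanishing of $k$, dispatched by the linear-algebra elimination above.
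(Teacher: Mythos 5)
Your proposal is correct and follows essentially the same route as the paper's proof: direct binomial expansions of the existence condition and of $\widetilde{\calD}$ in $\hlambda$ (your closed forms in $A_1,A_2$ are exactly the paper's expressions $a_i$ in terms of $\mathcal{F}(\rho_1)=(\rho_1^2+4)^{-1/2}$), followed by expansion in $c$ using evenness, with $k\neq 0$ settled by solving the two linear constraints $\kappa_1^0=\kappa_3^0=0$ — the paper eliminates $(\alpha,\beta)$ while you eliminate $(\alpha\htau,\beta\htheta/D)$, both yielding the same value $k=-\tfrac{\sqrt{2}}{128}\,\htau^2\htheta^2/D^2<0$. The only blemish is the misplaced parenthetical identity $\tfrac14\cdot\tfrac{2\sqrt2}{3}=\tfrac{\sqrt2}{6}$, which is the one needed for $a_{00}=\tfrac14\kappa_1^0$ rather than for the linear coefficient of $T_\Gamma$ (where $\tfrac12\cdot\tfrac{2\sqrt2}{3}=\tfrac{\sqrt2}{3}$ is used); this does not affect the argument.
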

In the following, we will make use of the Taylor coefficients $ a_{ij} $ of the Evans function to derive expressions for the coefficients of the reduced system on the center manifold and discuss the unfolding of its bifurcation structure. Hence,
  \begin{align}\label{eq:a_ij_mu}
   a_{ij} = a_{ij}(\mu) \, ,
  \end{align}
where $ \mu = (\mu_1, \mu_2) $ is some choice of unfolding parameters. 


\begin{proof}[of Lemma~\ref{lemma:taylor}]
A straightforward computation yields the Taylor expansion of the existence condition \eqref{eq:existence_condition} (with $\gamma=0$). In order to verify that $ k \neq 0$ at $ \kappa_1^0 = \kappa_3^0 = 0 $, we can use that $ \kappa_1^0 = \kappa_3^0 = 0 $ can be expressed as 
$$
\alpha = - \frac{2 \sqrt{2}}{3} \frac{\htheta^2}{\htau(D^2 \htau^2-\htheta^2 )}\, , \qquad
\beta = \frac{2 \sqrt{2}}{3} \frac{D^3 \htau^2}{\htheta(D^2 \htau^2-\htheta^2)}\,,$$
with automatically nonzero denominator at $ \kappa_1^0 = \kappa_3^0 = 0 $.
This gives
$$  k|_{\kappa_1^0 = \kappa_3^0 = 0} = -\frac{ \sqrt{2}}{128} \htau^2 \left(\frac{\htheta}{D} \right)^2  \neq 0 \, . $$
Another straightforward calculation gives
\begin{align} \nonumber 
\begin{aligned}
a_0(c) &= -\frac{\sqrt 2}{6} + 2\alpha  \htau \left({\cal{F}}(c \htau)\right)^3 + 2 \frac{\beta}{D}\htheta \left({\cal{F}}(c \htheta/D)\right)^3
\, ,\\
a_1(c) &= - 6 \alpha \htau^2 \left({\cal{F}}(c \htau)\right)^5 -   6 \frac{\beta}{D}\htheta^2 \left({\cal{F}}(c \htheta/D)\right)^5 \,, \\
a_2(c) &=  20 \alpha \htau^3 \left({\cal{F}}(c \htau)\right)^7 +  20 \frac{\beta}{D}\htheta^3 \left({\cal{F}}(c \htheta/D)\right)^7 \,,  \\
a_3(c) &= -70 \alpha \htau^4  \left({\cal{F}}(c \htau)\right)^9 - 70 \frac{\beta}{D}\htheta^4 \left({\cal{F}}(c \htheta/D)\right)^9\,, 
\end{aligned}
\end{align}
with 
$
  {\cal{F}}(\rho_1) = (\rho_1^2+4)^{-1/2}.
$
Further direct computations yield the claimed Taylor expansions.
$\hfill{\Box}$
\end{proof}
\subsubsection{Unfolding of the codimension two case}
Throughout this section, we denote by $ \mu = (\mu_1, \mu_2) $ any choice of parameters in \eqref{eq:three_component_system} with $(\kappa_1^0,\kappa_2^0)|_{\mu = 0}=0$, and denote with $\nabla_\mu$ the gradient with respect to $ \mu $. 
The following proposition allows us to identify and unfold the SBT case.


\begin{proposition}\label{prop:structure_G}
The reduced system on the center manifold \eqref{eq:a_c_c_tilde_ODEs} has, to leading order in $\eps$,  the form 
\begin{align}
  \nonumber
  \left\{
\begin{aligned}
\dot{c}  &= \eps^2 \cc\\
\dot{\tilde c} &= \eps^2 G(c,\tilde c,\mu) = \eps^2 \left(G_1(c,\mu) + \widetilde c {G}_2(c,\mu)\right) + o(\eps^2) \, ,
\end{aligned} \right.
\end{align}
where
\begin{align}
\begin{array}{rclcl}
 G_1(c,\mu)   &=& [g_{10} + g_{11}\cdot \mu]c & + & g_{30} c^3 + h.o.t. \, , \quad {\textnormal{and}}\\
 G_2(c,0,\mu) &=& \ g_{20} + g_{21}\cdot \mu & + & g_{40} c^2 +  h.o.t. \, ,
\end{array}
\label{eq:g_ij}
\end{align}
with $ g_{j0} \in \mathbb{R}, g_{j1} \in \mathbb{R}^2 $ (and, hence, linear functions $g_{j1}\cdot \mu $). Moreover, with $ a_{ij} $ from \eqref{eq:a_ij} and the notation from \eqref{eq:a_ij_mu}, 
\begin{align*}
 g_{10} = 0 \, , \,\, g_{20} = 0 \, , \,\, 
 g_{30} = -\frac{1}{3}\frac{a_{20}(0)}{a_{02}(0)} \, , \,\, 
 g_{40} = -\frac{1}{a_{02}(0)} \left( a_{21}(0) - \frac{a_{20}(0) a_{03}(0)}{a_{02}(0)} \right) \, ,
\end{align*}
that is, the linear part is, as expected, a Jordan block of length two at the organizing center, and
\begin{align*}
 g_{11} = - \frac{\nabla_{\mu} a_{00}(0) }{ a_{02}(0)} \, , \qquad
 g_{21} = - \frac{1}{a_{02}(0)} \left[ \nabla_{\mu} a_{01}(0) - \frac{ \nabla_{\mu} a_{00}(0) a_{03}(0)}{a_{02}(0)} \right] \, ,
\end{align*}
\end{proposition}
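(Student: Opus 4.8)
The plan is to read off the six coefficients in \eqref{eq:g_ij} from two independent facts about the reduced flow \eqref{eq:a_c_c_tilde_ODEs} that are already at our disposal: by Theorem~\ref{theorem:cmr} its equilibria $(c,0)$ are \emph{exactly} the uniformly traveling fronts with velocity $\eps^2 c$ (so $G_1(c,\mu)=0\Leftrightarrow T_\Gamma(c)=0$), and the linearization at such an equilibrium must reproduce the two small eigenvalues of the front, which are governed by the leading-order Evans function $\calE$ from Lemma~\ref{lemma:taylor}. I would first fix the \emph{structural form}: the reflection symmetry $(c,\cc)\to-(c,\cc)$ from Theorem~\ref{theorem:cmr} means $G(-c,-\cc,\mu)=-G(c,\cc,\mu)$, so writing $G(c,\cc,\mu)=G_1(c,\mu)+\cc\,G_2(c,\mu)+O(\cc^2)$ forces $G_1(\cdot,\mu)$ odd and $G_2(\cdot,\mu)$ even in $c$. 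Together with smoothness of $G$ (guaranteed by the reduction) this gives precisely the expansions \eqref{eq:g_ij}, leaving only the identification of the coefficients.

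Next I would carry out the \emph{eigenvalue matching}. Linearizing the planar part of \eqref{eq:a_c_c_tilde_ODEs} at an equilibrium $(c_*,0)$ gives the Jacobian $\eps^2\bigl(\begin{smallmatrix}0&1\\ \partial_c G&\partial_\cc G\end{smallmatrix}\bigr)$, whose eigenvalues on the slow scale $\hlambda$ solve $\hlambda^2-\partial_\cc G(c_*,0,\mu)\,\hlambda-\partial_c G(c_*,0,\mu)=0$. Since $(c_*,0)$ is a traveling front, these two eigenvalues equal, to leading order in $\eps$, the two small roots of $\calE(\hlambda,c_*)=0$: at the organizing center $a_{00}(0)=\tfrac14\kappa_1^0$ and $a_{01}(0)=-\tfrac{3}{16}\kappa_2^0$ both vanish while $a_{02}(0)\neq0$, so $\calE$ has exactly two roots near $\hlambda=0$, matching the planar system. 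I would then Weierstrass-factor $\calE(\hlambda,c,\mu)=(\hlambda^2+p(c,\mu)\hlambda+q(c,\mu))\,U(\hlambda,c,\mu)$ with $U(0)\neq0$, so that $\partial_c G(c,0,\mu)=-q$ and $\partial_\cc G(c,0,\mu)=-p$. Comparing coefficients of $\hlambda^0,\dots,\hlambda^3$ in this factorization yields, to the order needed, $q=a_0/a_2+\text{h.o.t.}$ and $p=a_1/a_2-a_0a_3/a_2^2+\text{h.o.t.}$; the $a_3$-correction in $p$ is exactly what produces the $a_{03}$-terms in $g_{40}$ and $g_{21}$. Inserting the even expansions $a_i(c,\mu)=\sum_{j\ge0}a_{2j,i}\,c^{2j}$ and using $a_{00}(0)=a_{01}(0)=0$, one reads off all six coefficients, e.g. $g_{30}=-\tfrac13 a_{20}(0)/a_{02}(0)$ and $g_{40}=-a_{02}(0)^{-1}\bigl(a_{21}(0)-a_{20}(0)a_{03}(0)/a_{02}(0)\bigr)$, and similarly the $\mu$-linear parts via $\nabla_\mu a_{00}(0)$ and $\nabla_\mu a_{01}(0)$.

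The main obstacle, and where the real care is needed, is that the eigenvalue relations $\partial_c G=-q$, $\partial_\cc G=-p$ are a priori valid only on the equilibrium locus $\{T_\Gamma=0\}$, whereas the claimed formulas are Taylor coefficients at the origin in the \emph{full} $(c,\mu)$ variables. I would promote the on-locus identities to the required Taylor identities using the two-parameter unfolding: assuming $\mu\mapsto(\kappa_1^0,\kappa_2^0)$ is a local diffeomorphism (transversality of the unfolding), $a_{00}$ and $a_{01}$ serve as independent coordinates and the fronts sweep out enough of the locus. The discrepancy between the two sides vanishes on $\{T_\Gamma=0\}$ and is hence controlled by $T_\Gamma$; the crucial observation is the structure of $T_\Gamma$ in \eqref{eq:T_gamma}, namely that it is odd in $c$ and starts at order $\kappa_1^0 c$ with $\kappa_1^0|_{\mu=0}=0$. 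Consequently the discrepancy carries no constant term, no pure-$\mu$ term, and---at $\mu=0$---no $c^2$ term, so it cannot affect precisely the coefficients $g_{10},g_{11},g_{30}$ (from $q$) and $g_{20},g_{21},g_{40}$ (from $p$) that we extract. Making this non-interference argument rigorous, rather than the routine coefficient bookkeeping, is the step I expect to require the most attention.
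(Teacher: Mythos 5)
Your proposal is correct, and its skeleton is the same as the paper's proof: equilibria of \eqref{eq:a_c_c_tilde_ODEs} are identified with uniformly traveling fronts, the characteristic polynomial $\hlambda^2-G_2(c,0;\mu)\hlambda-\partial_c G_1(c;\mu)$ of the reduced linearization is matched to the monic quadratic Weierstrass factor of $\calE$, and the coefficients are read off by comparing powers of $\hlambda$. Your closed forms $q=a_0/a_2+\text{h.o.t.}$ and $p=a_1/a_2-a_0a_3/a_2^{2}+\text{h.o.t.}$ are exactly what the paper's recursion ($a_0=\ta_0\te_0$, $a_1=\ta_1\te_0+\ta_0\te_1$, $a_2=\te_0+\ldots$, with $\ta_0(0,0)=\ta_1(0,0)=0$, $\te_0(0,0)=a_{02}(0)$, $\te_1(0,0)=a_{03}(0)$) produces, and they yield the stated $g_{ij}$.

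The genuine difference is your last paragraph, and it is a point in your favor. The paper passes from ``same roots in the sense of expansion'' directly to the full Taylor identity \eqref{eq:taylor_comparison}, although, as its own footnote concedes, $\calE$ carries spectral information only where a front exists, i.e.\ on the locus $\{T_\Gamma=0\}$; strictly speaking, the matching argument gives $-G_2=\ta_1$ and $-\partial_cG_1=\ta_0$ only there. Your non-interference argument supplies the missing step: the discrepancies vanish on $\{T_\Gamma=0\}$, hence (factoring $T_\Gamma=c\,\psi$ with $\psi=\tfrac12\kappa_1^0(\mu)-\tfrac{1}{16}\kappa_3^0c^2+\ldots$ and applying Hadamard's lemma twice, using that $\{\psi=0\}$ is a smooth hypersurface when $\nabla_\mu\kappa_1^0(0)\neq0$) they are smooth multiples of $T_\Gamma$; since every monomial of $T_\Gamma$ contains $c$, its $c$-linear part contains $\mu$, and $T_\Gamma|_{\mu=0}=O(c^3)$, such multiples have zero value, zero $\mu$-gradient and zero $c^2$-coefficient at the origin, so none of $g_{10},g_{20},g_{11},g_{21},g_{30},g_{40}$ is contaminated. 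Three small additions would make this airtight. First, the transversality $\nabla_\mu\kappa_1^0(0)\neq0$ is not among the proposition's hypotheses, but it is harmless: $g_{30},g_{40}$ do not depend on the choice of $\mu$, so you may verify them with the concrete transversal selection \eqref{e:mu}, whose transversality is \eqref{e:albemat}, while $g_{10},g_{20},g_{11},g_{21}$ come from the slice $\{c=0\}$ alone (stationary fronts exist for all $\mu$), where nothing is needed. Second, record that evenness of $\calE$ in $c$ forces $\ta_0,\ta_1$ even in $c$ (uniqueness of the Weierstrass factorization); this is what kills the cross terms $\partial_c a_0\,\partial_c(1/a_2)$ in your quotient formulas and makes ``to the order needed'' legitimate. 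Third, verify $a_{02}(0)\neq0$, which the paper does via \eqref{e:albe0}, obtaining $a_{02}(0)=-\tfrac{5\sqrt2}{48}\htau\htheta$.
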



\begin{corollary}\label{cor:structure_G_parameters}
The coefficients $ g_{ij} $ from Proposition~\ref{prop:structure_G} satisfy
\begin{align}
 g_{11} &= \frac{6\sqrt2}{5\htau\htheta} \left( \nabla_\mu \kappa_1^0 \right)|_{\mu=0} \,,\label{e:g1mu}\\
 g_{21} &= -\frac{3}{5\sqrt2\htau\htheta}\left.\left( 3\nabla_\mu\kappa_2^0 - \frac{7}{2}(\htau+\htheta)\nabla_\mu \kappa_1^0
\right)\right|_{\mu=0} \,, \label{e:g2mu}
\end{align}
as well as 
\begin{align*}
 g_{30} &= \left.-\frac{3}{20\sqrt{2}}\left(\frac{\kappa_3^0}{\htau\htheta}\right)\right|_{\mu=0} = \left.\frac{1}{5}\frac{D^2\htau-\htheta}{D^2(\htau-\htheta)} \right|_{\mu=0} ,  \\
 g_{40} &=\left. -\frac{3}{40}\frac{3(D^2\htau^2-\htheta^2)+7\htau\htheta(1-D^2)}{D^2 (\htau -\htheta)} \right|_{\mu=0}  \, .
\end{align*}
In particular, $g_{30}=0$ is equivalent to $\kappa_3^0|_{\mu=0}=0$ and in this case $g_{40}=-\frac 3 4 \htau<0$. Conversely, if $g_{40}=0$ then $7\htau>3\htheta$ and $g_{30} = \frac{2\htau}{7\htau-3\htheta}> 0$.
Moreover, if $g_{30}<0$ then $g_{40}<0$. 
\end{corollary}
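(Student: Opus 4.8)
The plan is to reduce everything to the values $a_{02}(0)$, $a_{20}(0)$, $a_{21}(0)$, $a_{03}(0)$ and the gradients $\nabla_\mu a_{00}(0)$, $\nabla_\mu a_{01}(0)$ that enter the formulas of Proposition~\ref{prop:structure_G}, and to evaluate these explicitly at the organizing center $\mu=0$. The starting observation is that the two conditions $\kappa_1^0=\kappa_2^0=0$ defining $\mu=0$ form a linear system in the pair $(\alpha,\beta/D)$, which I would solve to obtain the closed forms
\begin{align*}
\alpha\htau = -\frac{2\sqrt2}{3}\frac{\htheta}{\htau-\htheta}\,,\qquad \frac{\beta}{D}\htheta=\frac{2\sqrt2}{3}\frac{\htau}{\htau-\htheta}\,.
\end{align*}
With these, every symmetric combination of the form $\alpha\htau^k+(\beta/D^{\,m})\htheta^k$ appearing in Lemma~\ref{lemma:taylor} collapses to a simple rational expression. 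In particular a short computation gives the common denominator $a_{02}(0)=\tfrac{5}{32}(\alpha\htau^3+\tfrac{\beta}{D}\htheta^3)=-\tfrac{5\sqrt2}{48}\htau\htheta$, and analogously $a_{20}(0)$, $a_{21}(0)$, $a_{03}(0)$ and $\kappa_3^0|_{\mu=0}=-\tfrac{2\sqrt2}{3}\htau\htheta(D^2\htau-\htheta)/(D^2(\htau-\htheta))$ in terms of $\htau,\htheta,D$.

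Substituting these into the Proposition's expressions then yields the stated formulas. For the linear-in-$\mu$ coefficients I would exploit that $a_{00}=\tfrac14\kappa_1^0$ and $a_{01}=-\tfrac{3}{16}\kappa_2^0$ are proportional to $\kappa_1^0,\kappa_2^0$, so that $\nabla_\mu a_{00}(0)$ and $\nabla_\mu a_{01}(0)$ pass directly to $\nabla_\mu\kappa_1^0,\nabla_\mu\kappa_2^0$, while the remaining $a_{ij}(0)$ enter only through their values at $\mu=0$; collecting the numerical factors gives \eqref{e:g1mu}--\eqref{e:g2mu}. For $g_{30}$ the substitution is immediate, and the two displayed forms are simply the same quantity written via $\kappa_3^0$ or via $\htau,\htheta,D$. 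The lengthiest but still routine piece is $g_{40}$, where the cross term $a_{20}(0)a_{03}(0)/a_{02}(0)$ must be combined with $a_{21}(0)$; after factoring out $\htau\htheta/(D^2(\htau-\htheta))$ the bracket simplifies, up to a constant, to $-3(D^2\htau^2-\htheta^2)+7\htau\htheta(D^2-1)$, which reproduces the claimed $g_{40}$.

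The qualitative consequences then follow from sign analysis using $\htau,\htheta>0$ and $D>1$. The equivalence $g_{30}=0\Leftrightarrow\kappa_3^0|_{\mu=0}=0\Leftrightarrow D^2\htau=\htheta$ is read off directly, and substituting $\htheta=D^2\htau$ into $g_{40}$ collapses it to $-\tfrac34\htau<0$; solving $g_{40}=0$ for $D^2$ and imposing $D^2>0$ excludes the branch $7\htau<3\htheta$, leaving $g_{30}=2\htau/(7\htau-3\htheta)>0$. The implication $g_{30}<0\Rightarrow g_{40}<0$ is the main obstacle, since it is the one statement not reducible to a single substitution. Here I would first argue that $g_{30}<0$ forces $\htheta>\htau$ together with $D^2\htau>\htheta$ (the opposite sign branch being ruled out by $D>1$, which would give $\htheta>D^2\htau>\htau$ contradicting $\htau>\htheta$). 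Writing the $g_{40}$ numerator as $N=D^2\htau(3\htau-7\htheta)+\htheta(7\htau-3\htheta)$ and using $D^2\htau>\htheta$ with $3\htau-7\htheta<0$, I would then bound $N<\htheta(3\htau-7\htheta)+\htheta(7\htau-3\htheta)=10\htheta(\htau-\htheta)<0$; since the denominator $D^2(\htau-\htheta)$ is also negative, $g_{40}$ is a negative multiple of a positive ratio, hence negative, as claimed.
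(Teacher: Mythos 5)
Your proposal is correct, and its overall route is the same as the paper's: solve $\kappa_1^0=\kappa_2^0=0$ for $(\alpha,\beta)$, substitute into the explicit coefficients $a_{ij}$ of Lemma~\ref{lemma:taylor} (in particular $a_{02}(0)=-\tfrac{5\sqrt2}{48}\htau\htheta$ and $a_{03}(0)/a_{02}(0)=-\tfrac78(\htau+\htheta)$), push the $\mu$-gradients through the identities $a_{00}=\tfrac14\kappa_1^0$, $a_{01}=-\tfrac{3}{16}\kappa_2^0$, and finish with a sign analysis; all of your intermediate values check out against the paper's. The one genuine point of divergence is the implication $g_{30}<0\Rightarrow g_{40}<0$: the paper substitutes $y=\htheta/\htau$, reduces the claim to showing $h(y):=\tfrac73(D^2-1)-D^2/y+y>0$ on $1<y<D^2$, and argues via $h'>0$ and $h(1)>0$; you instead bound the numerator $N=D^2\htau(3\htau-7\htheta)+\htheta(7\htau-3\htheta)$ directly, using $D^2\htau>\htheta$ and $3\htau-7\htheta<0$ to get $N<10\htheta(\htau-\htheta)<0$, which is a marginally more elementary one-line estimate achieving the same end; both arguments are valid. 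One caveat on your remark that the two displayed forms of $g_{30}$ are "the same quantity": as printed they differ by a factor $2$, since the prefactor in the statement should read $-\tfrac{3\sqrt2}{20}$ (as in the paper's own proof) rather than $-\tfrac{3}{20\sqrt2}$; your substitution in fact confirms the second form $g_{30}=\tfrac15\,\tfrac{D^2\htau-\htheta}{D^2(\htau-\htheta)}$, which is the one used in all the sign arguments, so this is a typo in the statement rather than a flaw in your proof.
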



\begin{remark}
The fact that $g_{30}=g_{40}=0$ is not possible implies that the degeneracies of higher order than the SBT case are either the SBTB case or the SBTD case, see Definition~\ref{DEF}.
\end{remark}


\begin{proof}[of Proposition~\ref{prop:structure_G} and Corollary~\ref{cor:structure_G_parameters}]
Since eigenvalues are invariant under coordinate changes, the eigenvalues of the linearization of \eqref{eq:a_c_c_tilde_ODEs} in equilibria coincide (in the sense of Taylor expansions) with the two small eigenvalues of the operator $ \mathcal{L}$. Recall only these eigenvalues (and the fixed zero eigenvalue) of $ \mathcal{L}$ are close to the imaginary axis and satisfy $ \lambda = \eps^2 \hlambda, \hlambda = \mathcal{O}(1) $. From this, we infer that $ G = \mathcal{O}(1) $ with respect to $ \varepsilon $ and also the two small eigenvalues $ \hlambda_j^\eps$, $j=1,2$, coincide with the eigenvalues of the linearization of \eqref{eq:a_c_c_tilde_ODEs} in equilibria. All quantities are at least continuous in $\eps$ at $\eps=0$ and we discuss the leading order next. Since fixed points of \eqref{eq:a_c_c_tilde_ODEs} are roots of $ G(c,0;\mu) = 0 $, for some functions $ G_1, G_2 $ we have
\begin{align} \nonumber 
G(c,\tilde c;\mu)|_{\eps=0} = G_1(c;\mu) + \widetilde c G_2(c,\widetilde c;\mu) \,,
\end{align}
where $G_1 $ has the same zeros as $T_\Gamma$ from \eqref{eq:T_gamma}. 
Linearizing the resulting system (in slow time) and evaluating at $\widetilde c=0$ gives the matrix
\begin{align} \nonumber 
\left( \begin{array}{cc} 0 & 1 \\  \partial_c G_1(c;\mu) &  G_2(c,0;\mu) \end{array} \right) \, ,
\end{align}
whose characteristic equation
\begin{align}\label{eq:char_eq_martix}
 \hlambda^2 - G_2(c,0;\mu) \hlambda - \partial_c G_1(c;\mu) =0\, ,
\end{align}
has the same roots as the (reduced) Evans function $\calE$ (in the sense of expansion). Precisely two of these roots vanish at the SBT point $\kappa_1^0 = \kappa_2^0 = 0$ and $\calE$ is analytic. The Weierstrass preparation theorem, cf.\ e.g.\ \cite{ChowHale}, thus yields 
\begin{equation}\label{e:weier}
\calE(\hlambda,c;\mu) =\left(\hlambda^2 +\ta_1(c,\mu) \hlambda + \ta_0(c,\mu)\right)\tE(\hlambda,c;\mu),
\end{equation}
for unique $\ta_0, \ta_1$ and non-vanishing $\tE$, all being holomorphic in $c$ and system parameters in a neighborhood of the SBT point. 
Comparing \eqref{eq:char_eq_martix} and~\eqref{e:weier} implies that
\begin{align}\label{eq:taylor_comparison}
  \mathcal{T}_{c,\mu}(-G_2) =  \mathcal{T}_{c,\mu}(\ta_1) \,, \qquad   \mathcal{T}_{c,\mu}(-\partial_c G_1) =  \mathcal{T}_{c,\mu}(\ta_0) \,,
\end{align}
that is, the Taylor expansions (including $c$ and parameters) of $-\ta_1$ and $G_2(\cdot,0)$, as well as $-\ta_0$ and $\partial_c G_1(\cdot)$ coincide, respectively. Since we know the Taylor expansion of $\calE$ from Lemma~\ref{lemma:taylor} we can employ a two step procedure to derive the formulas for $ g_{ij} $ in \eqref{eq:g_ij}:
\begin{description}
 \item{\underline{Step 1}:} Compute  the unknown $ \ta_j $ recursively from
 \begin{align*}
  \calE(\hlambda,c;\mu) &=(\hlambda^2 +\ta_1(c,\mu) \hlambda + \ta_0(c,\mu))\tE(\hlambda,c;\mu) \\ &= a_0(c,\mu) + a_1(c,\mu) \lambda + a_2(c,\mu) \lambda^2 + a_3(c,\mu) \lambda^3 + \ldots \, , 
 \end{align*}
 where the $a_j$'s are given in Lemma~\ref{lemma:taylor}.\\
 \item{\underline{Step 2}:} Use \eqref{eq:taylor_comparison} to determine the expressions for the $ g_{ij} $. \\
\end{description}

Let us now turn our attention to the recursion. By analyticity, 
\[
\tE(\hlambda,c;\mu)=\te_0(c,\mu) + \te_1(c,\mu) \lambda + \te_2(c,\mu) \lambda^2 + \te_3(c,\mu) \lambda^3 + \ldots
\]
and therefore
\begin{align*}
  \calE(\hlambda,c;\mu)
  &=(\hlambda^2 +\ta_1(c,\mu) \hlambda + \ta_0(c,\mu))\tE(\hlambda,c;\mu) \\
  &= \overbrace{[\ta_0(c,\mu)\te_0(c,\mu)]}^{\overset{!}{=}a_0(c,\mu)} + \overbrace{[\ta_1(c,\mu)\te_0(c,\mu)+ \ta_0(c,\mu)\te_1(c,\mu)]}^{\overset{!}{=}a_1(c,\mu)}  \lambda\\[.01cm]
  & \hspace{1cm} + \underbrace{[\te_0(c,\mu)+ \ta_1(c,\mu)\te_1(c,\mu)+ \ta_0(c,\mu)\te_2(c,\mu)]}_{\overset{!}{=}a_2(c,\mu)}  \lambda^2 + \ldots \, . 
 \end{align*}
At $ (c, \mu) = (0,0) $ we, hence, get
\begin{align*}
  a_0(0,0) & =  \ta_0(0,0)\te_0(0,0) \, ,\\
  a_1(0,0) & =  \ta_1(0,0)\te_0(0,0)+ \ta_0(0,0)\te_1(0,0) \, ,\\
  a_2(0,0) & =  \te_0(0,0)+ \ta_1(0,0)\te_1(0,0)+ \ta_0(0,0)\te_2(0,0) \, , \ldots \, .
\end{align*}
Since we know that $ a_0(0,0) = \frac14 \kappa_1^0|_{\mu = 0} =  0 $ and $\te_0(0,0) \neq0  $, we can conclude that $ \ta_0(0,0) = 0 $, so $ a_1(0,0) =  \ta_1(0,0)\te_0(0,0) $. But since $ a_1(0,0) =  -\frac{3}{16} \kappa_2^0|_{\mu = 0} =  0 $ and $\te_0(0,0) \neq0  $, this also implies $ \ta_1(0,0) = 0 $. Hence, the above recursion simplifies to
\begin{align}\label{eq:e_j}
  a_{j+2}(0,0) & =  \te_j(0,0) \, ,  \quad j \geq 0 \, .
\end{align}
In particular,
\begin{align*}
  \te_0(0,0) = a_{02}(0) \, , \quad \mathrm{and} \quad \te_1(0,0) = a_{03}(0) \, ,
\end{align*}
with $ a_{ij} $ from \eqref{eq:a_ij}. Equipped with this information, we go to the second step and compare the leading order terms in the Taylor expansions \eqref{eq:taylor_comparison} to infer the $ g_{j0} $, that is, the coefficients at the SBT point. We immediately get
\begin{align*}
 g_{10} = - \ta_0(0,0) = 0 \, , \quad \mathrm{and} \quad g_{20} = - \ta_1(0,0) = 0 \, .
\end{align*}
In order to compare the higher order terms in the Taylor expansion, we have to take derivatives of the above recursion. This implies (where a similar reasoning as before gives $ \partial_c \ta_0(0,0) = 0 $)
\begin{align*}
\left.\partial_c^2 [\ta_0(c,\mu)\te_0(0,0)]\right|_{(c,\mu) = (0,0)} = \partial_c^2 a_0(0,0) \\
   \,\Leftrightarrow \, 
 \partial_c^2 \ta_0(c,\mu) &= \frac{\partial_c^2 a_0(0,0)}{\te_0(0,0)}  = 2 \frac{a_{20}(0)}{a_{02}(0)} .
\end{align*}
The claimed expression for $g_{30}$ now immediately follows from
\begin{align*}
 3 \cdot 2 \ g_{30} = - \partial_c^2 \ta_0(0,0) \, ,
\end{align*}
A slightly longer, but analogous reasoning gives
\begin{align*} \partial_c^2 [\ta_1(c,\mu)\te_0(c,\mu)&+ \ta_0(c,\mu)\te_1(c,\mu)] \big|_{(c,\mu) = (0,0)} = \partial_c^2 a_1(0,0) \\ 
  \quad \Leftrightarrow \quad 
 \partial_c^2 \ta_1(c,\mu) &= \frac{1}{\te_0(0,0)} \left( \partial_c^2 a_1(0,0) - (\partial_c^2 \ta_0(0,0))\te_1(0,0) \right) \\ &=  \frac{1}{\te_0(0,0)} \left( \partial_c^2 a_1(0,0) -\frac{\partial_c^2 a_0(0,0)}{\te_0(0,0)} \te_1(0,0) \right)\, .
\end{align*}
So we infer the claimed $ g_{40} $ from
\begin{align*}
 2 \ g_{40} = - \partial_c^2 \ta_1(0,0) = - \frac{1}{a_{02}(0)} \left( 2a_{21}(0) - 2 \frac{a_{20}(0)}{a_{02}(0)}a_{03}(0) \right).
\end{align*}

It remains to compute the expressions that yield the unfolding parameters. To this end, we need the linear terms in the Taylor expansion of $ \ta_0, \ta_1 $ with respect to $ \mu $, that is, we again have to differentiate, but this time with respect to $ \mu $.  Using again $ \ta_0(0,0) = 0 $ this gives
\begin{align*}
 \nabla_{\mu} a_0(0,0) =  \nabla_{\mu}[ \ta_0(c,\mu)\te_0(c,\mu)]\big|_{(c,\mu) = (0.0)} 
 \quad \Leftrightarrow \quad  
 \nabla_{\mu} \ta_0(0,0) = \frac{\nabla_{\mu} a_0(0,0)}{\te_0(0,0)} \, ,
\end{align*}
so that
\begin{align*}
 g_{11} =  - \nabla_{\mu} \ta_0(0,0) = - \frac{\nabla_{\mu} a_{00}(0)}{a_{02}(0)} \, ,
\end{align*}
and
\begin{align*}
 \nabla_{\mu} a_1(0,0) =  \nabla_{\mu}&[ \ta_1(c,\mu)\te_0(c,\mu)+ \ta_0(c,\mu)\te_1(c,\mu) ]\big|_{(c,\mu) = (0.0)} \\
 \quad  \Leftrightarrow \quad 
 \nabla_{\mu} \ta_1(0,0) 
 &= \frac{1}{\te_0(0,0)} \left(\nabla_{\mu} a_1(0,0)- \nabla_{\mu} \ta_0(0,0) \te_1(0,0) \right) \\
& = \frac{1}{\te_0(0,0)} \left(\nabla_{\mu} a_1(0,0)- \frac{\nabla_{\mu} a_0(0,0)}{\te_0(0,0)} \te_1(0,0) \right) \, . 
\end{align*}
Now using \eqref{eq:e_j}, we get
\begin{align*}
 g_{21} =  - \frac{1}{a_{02}(0)} \left(\nabla_{\mu} a_{21}(0)- \frac{\nabla_{\mu} a_{00}(0)}{a_{02}(0)} a_{03}(0) \right) \, .
\end{align*}
Finally, the statement about $ g_{31}, g_{41} $ follows by continuity. This completes the proof of the lemma.

In order to verify the expressions in the corollary, we use the explicit expressions from \eqref{eq:a_ij}. Noting that $ \kappa_1^0 = \kappa_2^0 = 0$ implies
\begin{equation}\label{e:albe0}
\alpha =\frac{2 \sqrt{2} \htheta}{3 \htau (\htheta-\htau)}, \, 
\beta= \frac{2 \sqrt{2} D \htau}{3 (\htau - \htheta) \htheta}\,,
\end{equation}
we thus find
\[
\te_0(0,0) =  a_{02}\big|_{\kappa_1^0=0, \kappa_2^0=0}  =  -\frac{5\sqrt2}{48} \htau\htheta <0.
\]
Furthermore,
\begin{align*}
g_{11} &= \left. \dfrac{48}{5\sqrt2\htau\htheta}\nabla_\mu a_0\right|_{\mu=0}
= \left. \frac{12}{5\sqrt2\htau\htheta}\nabla_\mu \kappa_1^0\right|_{\mu=0}\\
g_{21} &= \left.-\frac{48}{80\sqrt2\htau\htheta}\left(
3\nabla_\mu\kappa_2^0 - \frac{7}{2}(\htau+\htheta)\nabla_\mu \kappa_1^0
\right)\right|_{\mu=0},
\end{align*} 
where we used that $a_3|_{c,\mu=0} = \left. -\frac{35}{256}\left(\alpha \htau^4+\frac{\beta}{D}\htheta^4\right)\right|_{\mu=0}$ gives 
\[
\left. \frac{a_3}{a_2}\right|_{c,\mu=0} = -\left. \frac{7}{8}(\htau+\htheta)\right|_{\mu=0}.
\]
Finally,
\begin{align*}
g_{30} &= \left. -\frac{3 \sqrt2}{20}\left(\frac{\kappa_3^0}{\htau\htheta}\right)\right|_{\mu=0},\\
g_{40} &=\left. -\frac{3}{40}\frac{3(D^2\htau^2-\htheta^2)+7\htau\htheta(1-D^2)}{D^2 (\htau -\htheta)}\right|_{\mu=0},
\end{align*}
where the latter follows from simplifying, at $\kappa_1^0=\kappa_2^0=0$, the expression
\begin{align*}
&- \frac{1}{a_{02}(0)}\left(a_{21}(0)-\frac{a_{03}(0)}{a_{02}(0)}a_{20}(0)\right)
\\
&\qquad =\left. \frac{24\sqrt2}{5\htau\htheta}\left(
\frac{15}{128}\left(\alpha \htau^4+\frac{\beta}{D^3}\htheta^4\right)+
\frac{35}{256}\left(\alpha \htau^4+\frac{\beta}{D}\htheta^4\right)\frac{24\sqrt2}{5\htau\htheta}\frac{3}{32}\kappa_3^0
\right)\right|_{\mu=0}\,,
\end{align*}
using that at $\mu=0$ we have 
\[
\alpha \htau^4+\frac{\beta}{D^3}\htheta^4 =  -\dfrac{2\sqrt{2}}{3}\htau\htheta\frac{D^2\htau^2-\htheta^2}{D^2(\htau-\htheta)}\,,\;
\alpha \htau^4+\frac{\beta}{D}\htheta^4= - \dfrac{2\sqrt{2}}{3}\left(\htau \htheta (\htau + \htheta)\right).
\]
The special case $g_{30}=0$ means $D^2=\frac{\htau}{\htheta}$ and $\htheta\neq \htau$, which imply $g_{40}=-\frac 3 4 \htau<0$ based on these formulas. Moreover, if $\htau=\frac 3 7 \htheta$ then $g_{40}=-\frac{9}{28}\htheta\neq 0$, and $g_{40}=0$ is equivalent to $\htau\neq \htheta$ and $D^2=\frac\htheta\htau\frac{3\htheta-7\htau}{3\htau-7\htheta}$, which must be positive. The numerator is positive if $3 \htheta> 7 \htau$ and the denominator is positive if $7\htheta<3 \htau $. So, both must be negative, which holds for $\frac 3 7 < \frac\htheta\htau < \frac 7 3$. In this case $g_{30} = \frac{2\htau}{7\htau-3\htheta}>0$. 

Concluding the proof, we show that $g_{30}<0$ implies $g_{40}<0$. For brevity, define $y:=\frac\htheta\htau$. Then, $g_{30}<0$ is equivalent to $1 < y < D^2$ due to the standing assumption that $D>1$. So, we assume that $1 < y < D^2$ and note that
$g_{40}>0$ is equivalent to $h(y):=\frac 7 3(D^2-1) - \frac{D^2}{y}+y<0$. 
Since $h'(y) = \frac{D^2}{y^2} + 1>0$, the function $h$ is strictly monotonically increasing and $h(1) = \frac43(D^2-1)>0$. 
In other words, $h(y)>0$ for $1 < y < D^2$ and, thus,   $g_{40}<0$ if $g_{30}<0$.
$\hfill{\Box}$
\end{proof}

In the statement and proof of Proposition~\ref{prop:structure_G}, we used \eqref{e:albe0} so that $g_3, g_4$ at $\mu=0$ are independent of $\alpha,\beta$. Then it is natural to fix $\htau, \htheta$ and choose $\mu$ affine in $\alpha, \beta$ as 
\begin{align}\label{e:mu}
\mu=(\alpha, \beta) - \frac{2 \sqrt{2} }{3(\htheta-\htau)} \left(\frac{\htheta}{\htau},  -\frac{D \htau}{\htheta}\right).
\end{align}
With this choice \eqref{e:g1mu}, \eqref{e:g2mu} together with \eqref{eq:kappas} give the matrix
\begin{equation}\label{e:albemat}
\left.\partial_{(\alpha,\beta)}\begin{pmatrix}g_1\\g_2\end{pmatrix} \right|_{\mu=0}= 
\frac{3\sqrt2}{5}
\begin{pmatrix}
\frac 2 \htheta & \frac{2}{D\htau}\\
\frac{\htau+7\htheta}{4\htheta} & \frac{\htheta+7\htau}{4D\htau}
\end{pmatrix}\,.
\end{equation}
We can now prove the main result concerning the unfolding of the SBT case, i.e., the triple zero eigenvalue of the PDE without additional degeneracy. The corresponding bifurcation diagram in the case $g_{30}, g_{40}<0$ for the expansion \eqref{e:cmfexpand} is as plotted in Figure~\ref{f:unfoldsketch11}.


\begin{theorem}
Let $\htau, \htheta>0$, $D>1$ and $0<\eps\ll 1$. For $D^2\htau\neq \htheta$ and $D^2\htau\,(3\htau-7\htheta)\neq \htheta\,(3\htheta-7\htau)$ the parameters $\alpha, \beta$ unfold the bifurcation point $\kappa_1^0=\kappa_2^0=0$ of fronts in the sense of unfolding the SBT case for the reduced vector field \eqref{eq:a_c_c_tilde_ODEs} within the odd symmetry class of $G$.
\end{theorem}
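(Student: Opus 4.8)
The plan is to reduce the statement to two independent facts: that we are genuinely in the SBT case of Definition~\ref{DEF} (the nondegeneracy $g_{30}g_{40}\neq 0$), and that the two system parameters $\alpha,\beta$ map transversally onto the two unfolding parameters $(g_1,g_2)$ of the symmetric Bogdanov--Takens normal form. Since the SBT case requires no further near-identity coordinate change (as noted after Definition~\ref{DEF}), the reduced planar field is already in the Knobloch normal form \cite{Knobloch}, namely $c''=g_1 c + g_{30}c^3 + c'(g_2+g_{40}c^2)$ on the slow time scale, with $g_1=g_{11}\cdot\mu$ and $g_2=g_{21}\cdot\mu$ since $g_{10}=g_{20}=0$ at the organizing center. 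A versal unfolding within the odd symmetry class is then obtained precisely when $(\alpha,\beta)\mapsto(g_1,g_2)$ is a local diffeomorphism at $\mu=0$.

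First I would verify the nondegeneracy directly from the explicit formulas of Corollary~\ref{cor:structure_G_parameters}, $g_{30}=\frac15\frac{D^2\htau-\htheta}{D^2(\htau-\htheta)}$ and $g_{40}=-\frac{3}{40}\frac{3(D^2\htau^2-\htheta^2)+7\htau\htheta(1-D^2)}{D^2(\htau-\htheta)}$. The first hypothesis $D^2\htau\neq\htheta$ is exactly $g_{30}\neq 0$, while rearranging the numerator of $g_{40}$ as $D^2\htau(3\htau-7\htheta)+\htheta(7\htau-3\htheta)$ shows that the second hypothesis $D^2\htau(3\htau-7\htheta)\neq\htheta(3\htheta-7\htau)$ is exactly $g_{40}\neq 0$. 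Together they give $g_{30}g_{40}\neq 0$, placing us in the SBT case and ruling out the SBTB and SBTD degeneracies.

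Next I would establish transversality. I would first record that the defining conditions $\kappa_1^0=\kappa_2^0=0$ already force $\htau\neq\htheta$: solving \eqref{eq:kappas} for $(\alpha,\beta)$ yields \eqref{e:albe0}, whose denominators contain $\htheta-\htau$, and indeed eliminating $\alpha$ via $\kappa_2^0=0$ and substituting into $\kappa_1^0=0$ gives $\frac{\beta\htheta(\htau-\htheta)}{D\htau}=\frac{2\sqrt2}{3}$, which is unsolvable when $\htau=\htheta$. With $\htau,\htheta$ fixed and $\mu$ chosen affine in $(\alpha,\beta)$ as in \eqref{e:mu}, the relevant Jacobian is the matrix \eqref{e:albemat}, and a short computation gives its determinant as
\[
\left(\frac{3\sqrt2}{5}\right)^2\!\left(\frac{2}{\htheta}\frac{\htheta+7\htau}{4D\htau}-\frac{2}{D\htau}\frac{\htau+7\htheta}{4\htheta}\right)=\frac{54(\htau-\htheta)}{25\,D\htau\htheta},
\]
which is nonzero because $\htau\neq\htheta$ and $\htau,\htheta,D>0$. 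Hence $(\alpha,\beta)\mapsto(g_1,g_2)$ is a local diffeomorphism near $\mu=0$, so the two parameters realise every nearby perturbation of the linear part and thus constitute a versal unfolding of the SBT point within the odd symmetry class, completing the argument.

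The main obstacle, in my view, is not any single computation but the bookkeeping that ties the abstract meaning of ``unfolding the SBT case'' to the two concrete conditions above. One must be careful that the nondegeneracy $g_{30}g_{40}\neq0$ is what distinguishes the SBT scenario---in which the normal form is reached without the inaccessible coefficient $\partial_c\partial_{\cc}^2 G(0,0)$---from the SBTB case, and that the dimension count for a versal unfolding is governed solely by invertibility of the linear-part Jacobian \eqref{e:albemat}, with the cubic and quintic coefficients playing no role in transversality. The small logical point that makes the determinant nonvanishing \emph{without} an extra hypothesis is that $\htau\neq\htheta$ is automatic from the existence of the bifurcation point itself.
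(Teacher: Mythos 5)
Your proposal is correct and follows essentially the same route as the paper: it identifies the two hypotheses as exactly $g_{30}\neq 0$ and $g_{40}\neq 0$ via the explicit formulas of Corollary~\ref{cor:structure_G_parameters}, and it establishes transversality of $(\alpha,\beta)\mapsto(g_1,g_2)$ from the determinant $\frac{54}{25}\frac{\htau-\htheta}{D\htau\htheta}$ of the matrix \eqref{e:albemat}, using that $\kappa_1^0=\kappa_2^0=0$ forces $\htau\neq\htheta$ (the paper cites the nondegeneracy hypotheses (H1)--(H3) of \cite[Chapter 4]{Carr} where you invoke the Knobloch normal form, but the substance is identical). The only step the paper includes that you omit is the closing observation that all these computations are at leading order $\eps=0$, so continuity of the coefficients in $\eps$ is needed to conclude that the nondegeneracy and invertibility persist for $0<\eps\ll 1$, which is what the statement asserts.
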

 \begin{proof}
Due to Proposition~\ref{prop:structure_G},  $D^2\htau\neq \htheta$ and $D^2\htau\,(3\htau-7\htheta)\neq \htheta\,(3\htheta-7\htau)$ imply $g_{30}, g_{40}\neq 0$, which are precisely the nondegeneracy conditions (H1), (H2) in \cite[Chapter 4]{Carr} for the vector field \eqref{eq:a_c_c_tilde_ODEs} with expansion \eqref{e:cmfexpand}; the condition (H3) in that reference holds since \eqref{eq:a_c_c_tilde_ODEs} is in second order form. 

The unfolding parameters in \cite[Chapter 4]{Carr} are $g_1\, (=\partial_c G_1(0))$ and $g_2\, (=G_2(0))$. The matrix \eqref{e:albemat} has determinant $\frac{54}{25}\frac{\htau-\htheta}{D\htau\htheta}\neq 0$ since $\htau\neq \htheta$ if $\kappa_1^0=\kappa_2^0=0$. It follows that near the SBT point $g_1=g_2=0$ the mapping $(g_1, g_2) \mapsto (\alpha,\beta)$ is invertible. 
This and the signs of $g_{30},g_{40}$ persists for $0<\eps\ll 1$ by continuity. $\hfill{\Box}$
\end{proof}

\begin{remark}
Recall that due to Corollary~\ref{cor:structure_G_parameters} the case $g_{30}<0$ and $g_{40}>0$ cannot occur in \eqref{e:cmfexpand}. Otherwise, this would correspond to reflecting the case $g_{30}<0$ and $g_{40}<0$ by  $(\cc, g_2, t)\to -(\cc,g_2,t)$. This means that the unfolding (with $\gamma=0$) cannot generate stable `traveling breathers', i.e., periodically oscillating pseudo-fronts with nonzero average speed. In other words, there is no Hopf bifurcation from traveling fronts with $c\neq0$ to stable periodic orbits in the reduced ODE. 
\end{remark}

\begin{remark}\label{r:gam}
Invoking the parameter $\gamma$ breaks the odd symmetry of $G$ as the existence condition for traveling fronts directly shows. Note that $\alpha, \beta$ will also unfold bifurcations for $|\gamma|>0$ sufficiently small. In particular, consider the Hopf bifurcation for $g_{30}, g_{40}$, $g_1<0$ from stationary fronts to `standing breathers' with zero average speed, labelled ho$_1$ in Figure~\ref{f:unfoldsketch11}. Changing $\gamma$ to a non-zero value will move this bifurcation point to a Hopf bifurcation that creates stable traveling breathers. We plot a numerical example in Figure~\ref{f:movper}.
\end{remark}


\subsection{Numerical continuation and simulation}\label{s:num}
In this section, we present numerical computations that illustrate and corroborate the results of the previous sections.
We use the software package {\tt pde2path} \cite{p2p} for numerical continuation and bifurcation computations as well as simulations of the full PDE \eqref{eq:three_component_system}.  Our focus lies on recovering numerically the theoretical bifurcations sketched in Figure~\ref{f:unfoldsketch11}. As a starting point we take the setting from \cite[Fig. 11]{CDHR15}, which shows a periodic solution found by direct numerical simulation near a triple root. We fix 
\begin{equation}\label{e:numfix}
\eps=0.03, \htheta=10, \htau=4.21, D= 2.2
\end{equation}
and use a domain $[-L,L]$ with homogeneous Neumann boundary conditions.  Unless noted otherwise we take $L=10$, which turns out to be large enough so that longer domains do not noticably  change the results. 

The numerical simulations of the time evolution for pseudo-fronts were done using the `freezing method'  \cite{Beyn,p2psym}, where the domain moves effectively along with the traveling front in a comoving frame $\zeta(t)=x-a(t)$ with velocity $\dfrac{d}{dt}a=c$ of the pseudo-front (recall that in the analysis the velocity was rescaled to slow time). The instantaneous velocity is determined in each time step through the orthogonality condition to the group orbit of the translation symmetry given by
\[
c(t) = \frac{\langle M^{-1} F(Z), Z_x\rangle}{\|Z_x\|^2_2}.
\]
In the comoving $\zeta$-coordinate, we can  work on a relatively short spatial interval and with a fixed grid that is refined near the center, where the gradients are concentrated. We compute the `position' based on this velocity as $a(t) = \int_0^t c(s) ds$, but note that in general dynamic pseudo-fronts move relative to the $\zeta$ variable. For instance, in bifurcating periodic solutions the zero intersection of the $u$-component is not stationary in $\zeta$ but moves periodically.
\begin{figure}[tbp]
\begin{center}
\begin{tabular}{ccc}
\includegraphics[width=0.3\textwidth]{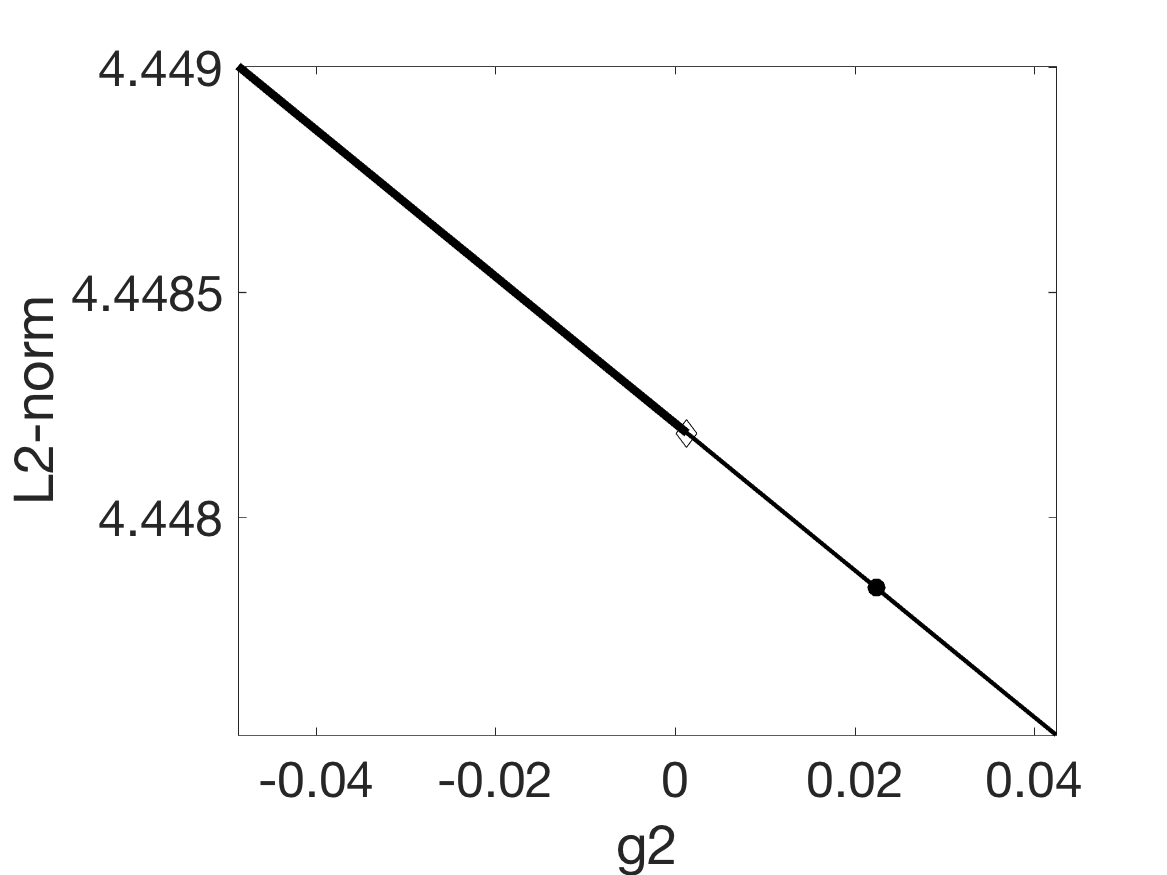}
&\includegraphics[width=0.3\textwidth]{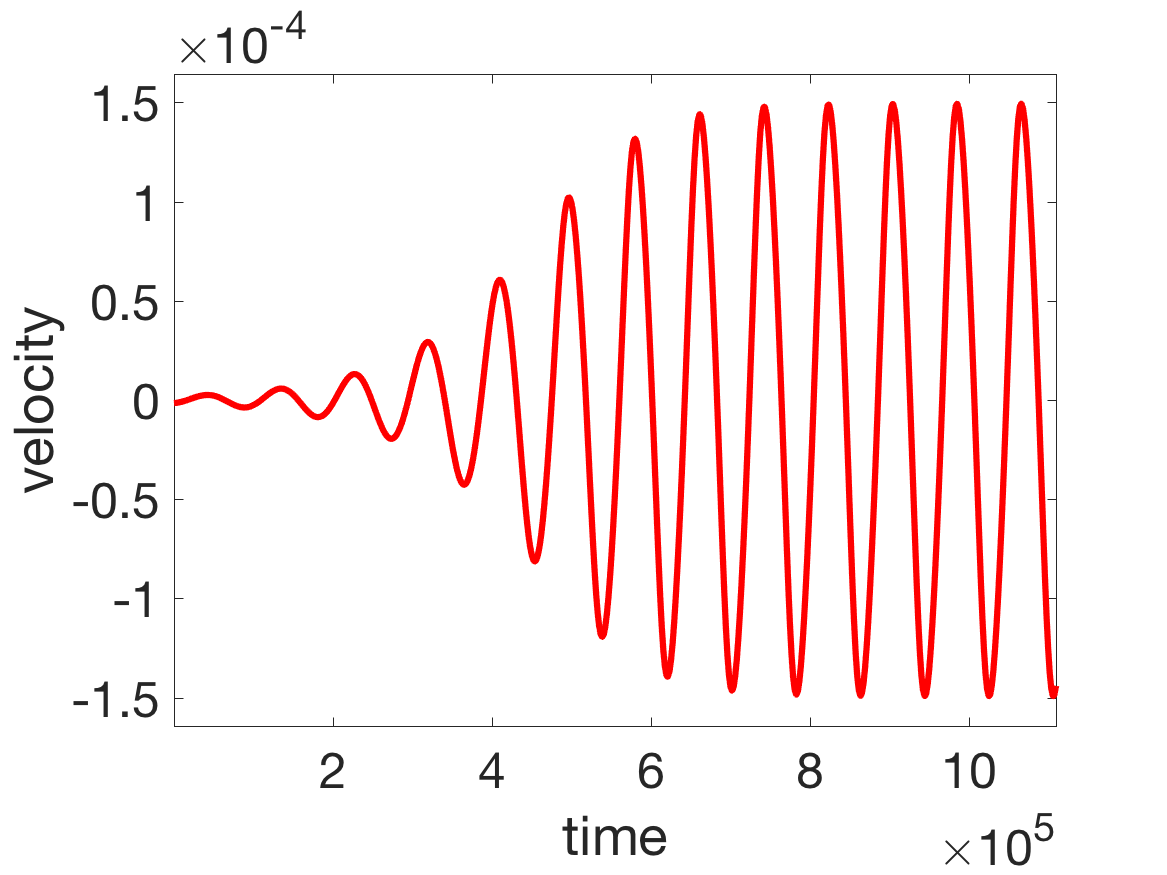}
&\includegraphics[width=0.3\textwidth]{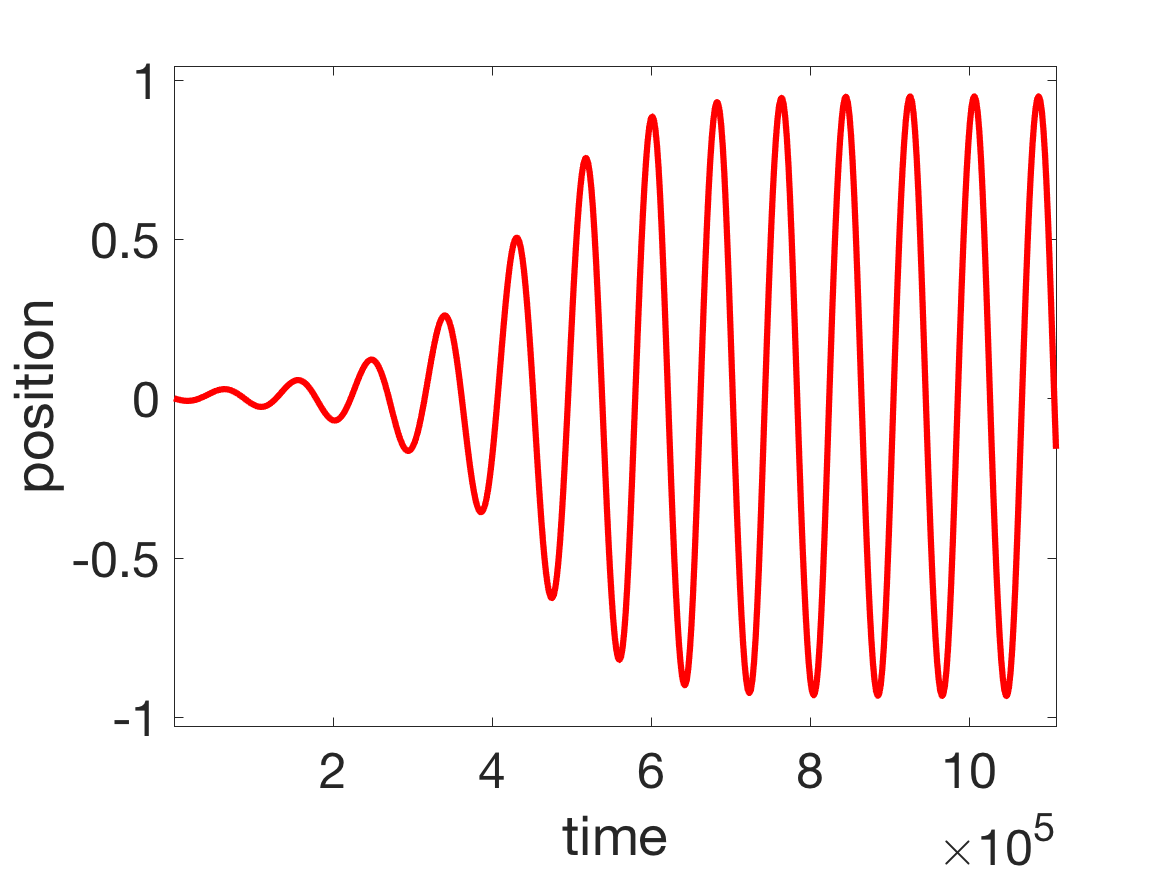}\\
(a) & (b) & (c)
\end{tabular}
\caption{From region 1 to 2 of Figure~\ref{f:unfoldsketch11}, i.e., $\g_3\approx-0.074$, $\g_4\approx  -3.14$ with $\g_1\approx-0.005$, and \eqref{e:numfix}.
%
(a) Branch of fronts from numerical continuation (stable thick, unstable thin) destabilising at a Hopf bifurcation point (diamond) at $\g_2\approx0.001$, theoretically predicted for $g_{30},g_{40}<0$ at $g_2=0$. (b,c) Plots of velocity and position from a simulation of a perturbation from the solution at the bullet in (a), where $\g_2\approx 0.02$, and PDE parameters in \eqref{eq:three_component_system} are $\alpha\approx 0.45$, $\beta\approx-0.24$ and \eqref{e:numfix}.
\label{f:triple-in-g2}}
\end{center}
\end{figure}

\begin{figure}[tbp]
\begin{center}
\begin{tabular}{ccc}
\includegraphics[width=0.3\textwidth]{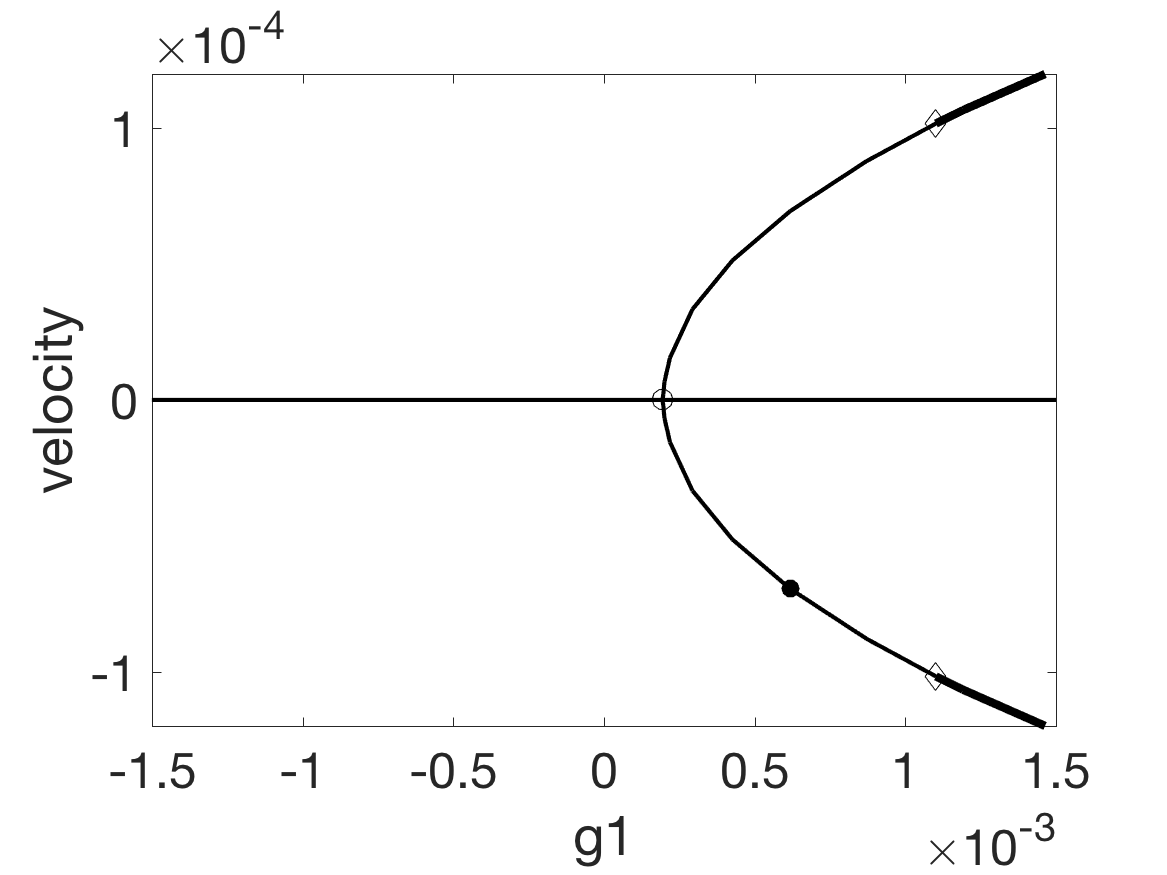}
&\includegraphics[width=0.3\textwidth]{vels-in-g1-g2pos-b-pt6.png}
&\includegraphics[width=0.3\textwidth]{pos-in-g1-g2pos-b-pt6.png}\\
(a) & (b) & (c)
\end{tabular}
\caption{From region 2 to 3,4 of Figure~\ref{f:unfoldsketch11}, i.e., $\g_3\approx-0.074$, $\g_4\approx  -3.14$ with $\g_2=0.042$, and \eqref{e:numfix}.
%
(a) Branches of fronts from numerical continuation (stable thick, unstable thin) connected at a pitchfork bifurcation at $\g_1\approx0.0002$ stabilising at Hopf points (diamonds) at $\g_1\approx0.001$, matching the theoretical prediction. 
(b,c) Plots of velocity and position from a simulation of a perturbation from the solution at the bullet in (a), where $\g_1\approx 0.0006$, and PDE parameters in \eqref{eq:three_component_system} are $\alpha\approx 0.44$, $\beta\approx-0.19$ and \eqref{e:numfix}. See also Figure~\ref{f:numintro}.
\label{f:triple-in-g1-g2pos}}
\end{center}
\end{figure}

Recall that the theoretical values of the center manifold coefficients $g_j$ from the previous sections were computed in the singular limit $\eps=0$. Since $\eps>0$ in the numerical computations, we expect the values differ slightly from the numerical ones, which we therefore denoted by $\g_j$. We approximate $\g_3$ and $\g_4$ using the formulas from Corollary~\ref{cor:structure_G_parameters} and take $\alpha, \beta$ as affine functions of $\g_1$ and $\g_2$ through \eqref{e:mu} and \eqref{e:albemat}.

The results plotted in Figures~\ref{f:triple-in-g2} and \ref{f:triple-in-g1-g2pos} correspond to the crossing from region 1 of Figure~\ref{f:unfoldsketch11} to region 2 -- a Hopf bifurcation -- and further to regions 3 and 4 -- a pitchfork bifurcation followed by another Hopf bifurcation. The crossing from region 1 to region 6 in Figure~\ref{f:unfoldsketch11}, i.e., crossing the $g_2$-axis with $g_2<0$, corresponds to the results plotted in Figure~\ref{f:triple3}. Here a pitchfork bifurcation occurs, near which the emerging heteroclinic connection lies in a one-dimensional center manifold and is thus monotone. However, the phase portrait plotted for region 6 in Figure~\ref{f:unfoldsketch11}, illustrates the case of complex leading eigenvalues of the bifurcated stable equilibrium. This highlights the underlying two-dimensional dynamics, which we find also numerically,  as plotted in Figure~\ref{f:triple3}(b).

\begin{figure}[tbp]
\begin{center}
\begin{tabular}{ccc}
\includegraphics[width=0.3\textwidth]{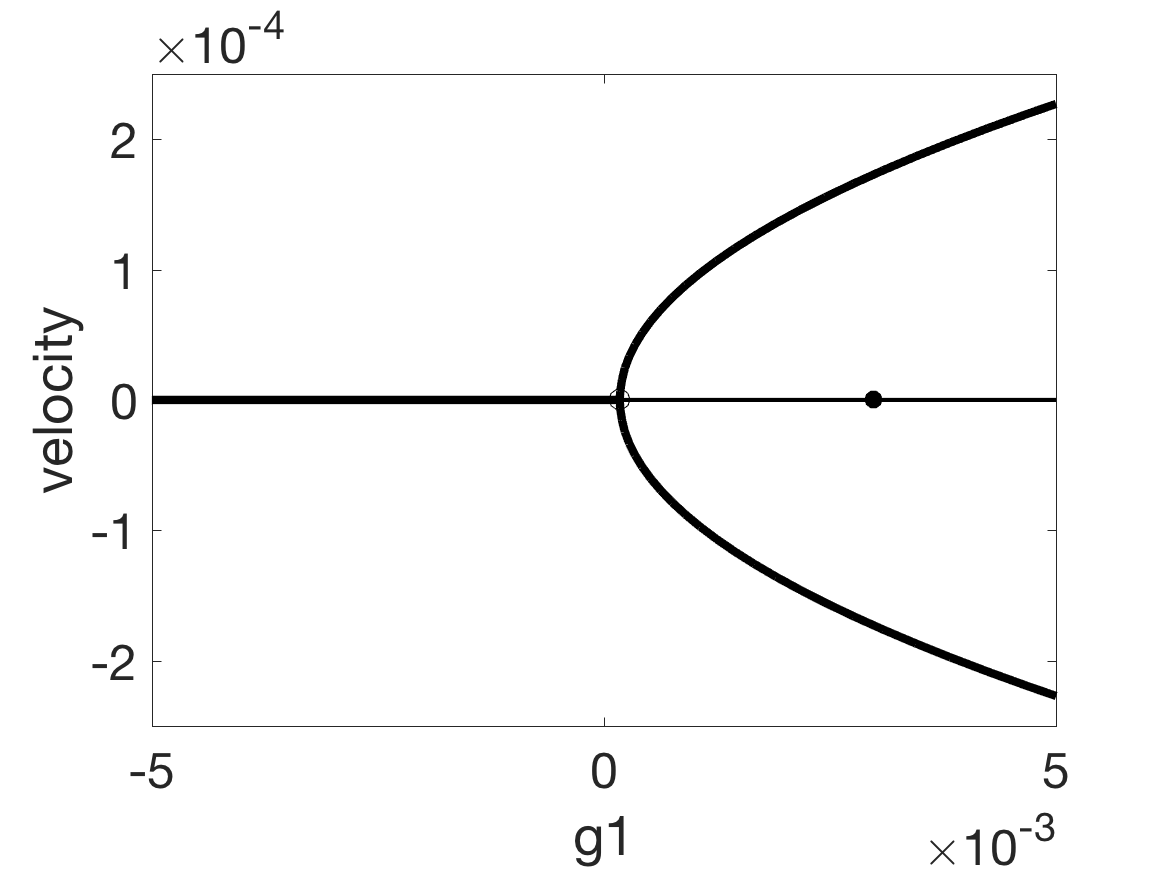}
& \includegraphics[width=0.3\textwidth]{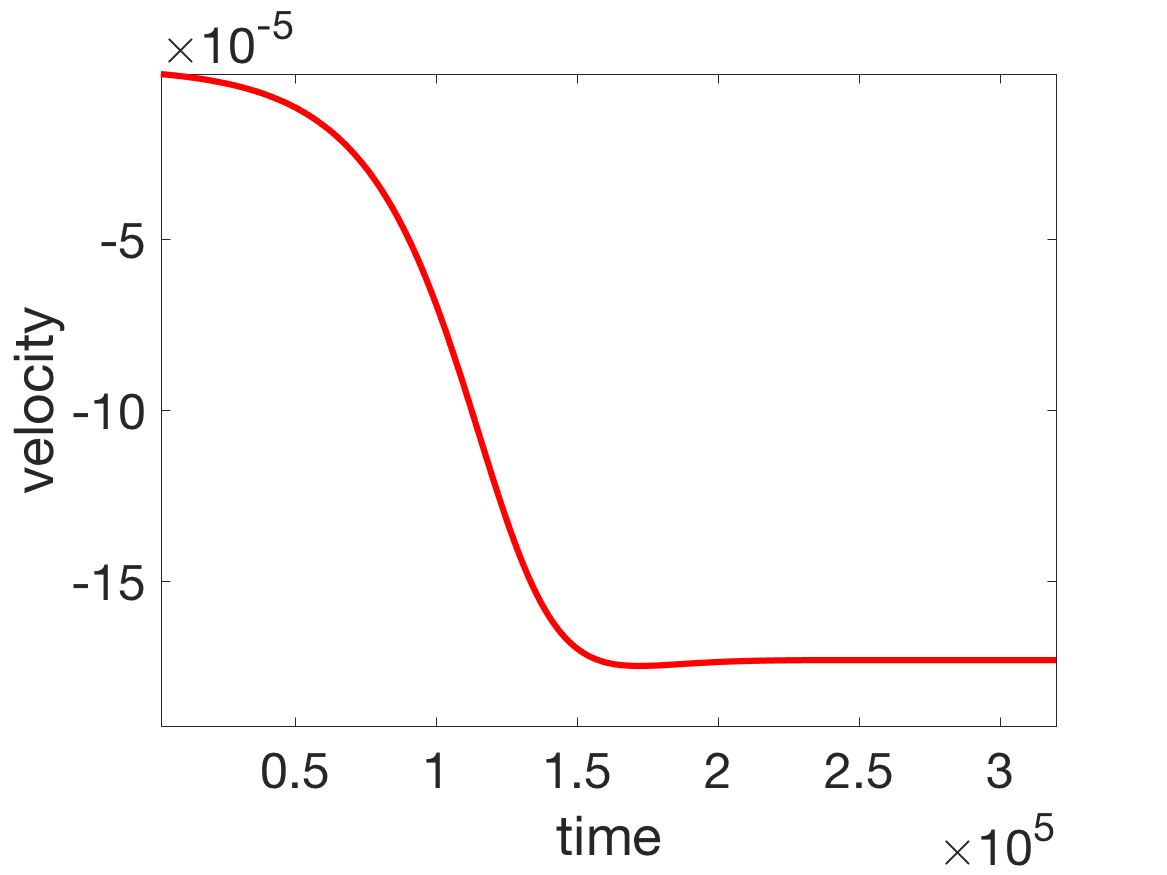}
&  \includegraphics[width=0.3\textwidth]{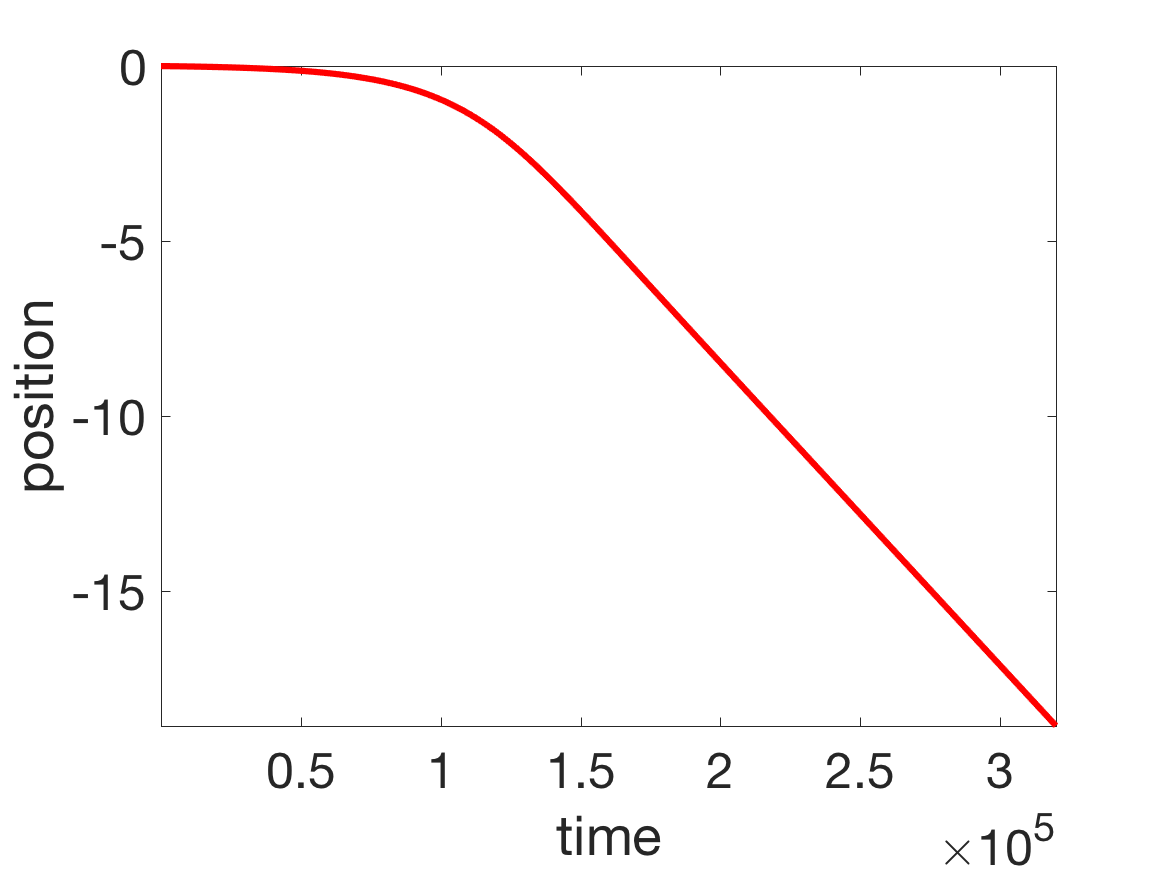}\\
(a) & (b) & (c) 
\end{tabular}
\caption{From region 1 to 6 of Figure~\ref{f:unfoldsketch11}, i.e., $\g_3\approx-0.074$, $\g_4\approx  -3.14$ with $\g_2\approx-0.02$, and \eqref{e:numfix}.
%
%
(a) Branches of fronts from numerical continuation (stable thick, unstable thin) connected at a pitchfork bifurcation at $\g_1\approx0.003$. 
(b,c) Plots of velocity and position from a simulation of a perturbation from the solution at the bullet in (a) with inset a magnification to highlight the  oscillatory convergence. Here $\g_1\approx 0.003$, and PDE parameters in \eqref{eq:three_component_system} are $\alpha\approx 0.34$, $\beta\approx-0.09$ and \eqref{e:numfix}.
}
\label{f:triple3}
\end{center}
\end{figure}

Finally, as noted in Remark~\ref{r:gam} for $\gamma=0$ there are no stable periodic traveling fronts with non-zero average speed, while for $\gamma\neq0$ these can be created. We plot an example in Figure~\ref{f:movper}.

\begin{figure}[tbp]
\begin{center}
\begin{tabular}{ccc}
\includegraphics[width=0.3\textwidth]{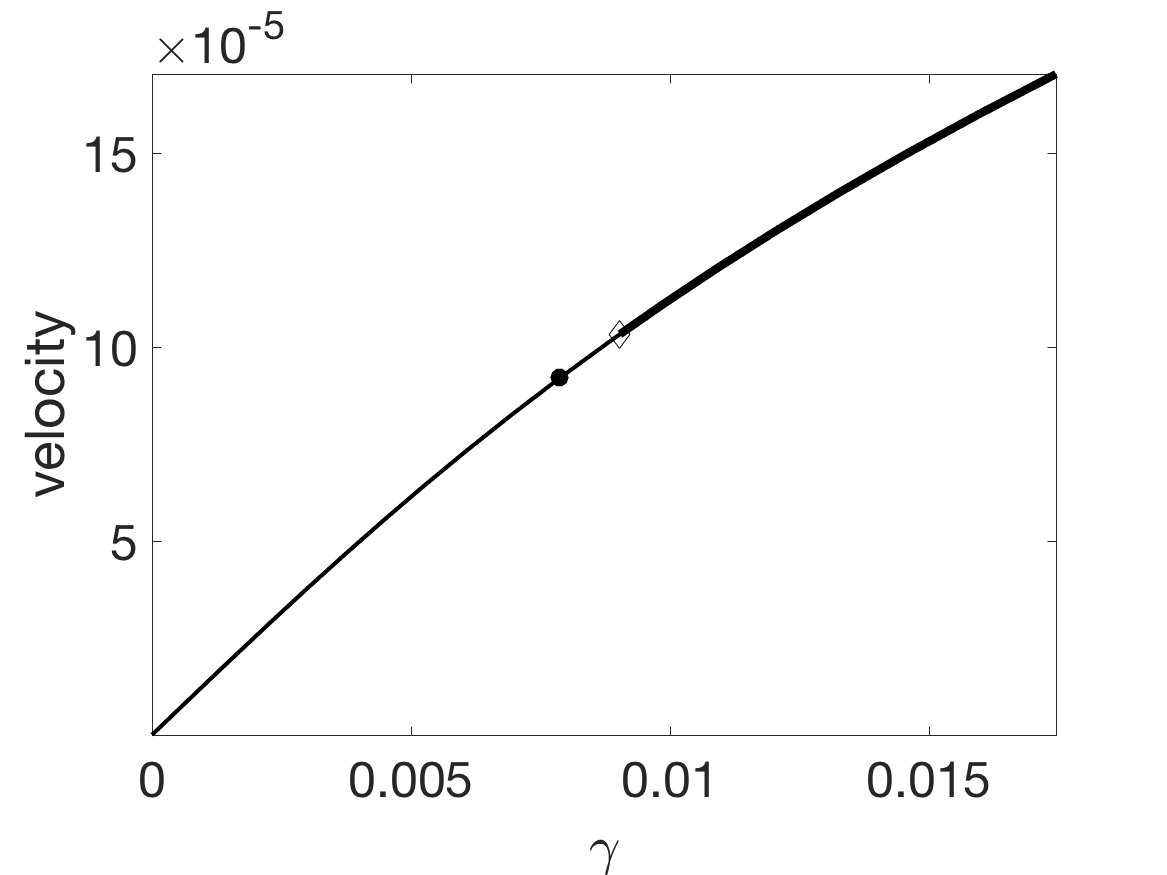}
&\includegraphics[width=0.3\textwidth]{vels-in-mu-pt12.png}
&\includegraphics[width=0.3\textwidth]{pos-in-mu-pt12.png}\\
(a) & (b) & (c)
\end{tabular}
\caption{Symmetry breaking with the parameter $\gamma$. (a) branch from numerical continuation starting at the rightmost point in Figure~\ref{f:triple-in-g2}(a). (b,c) plots of velocity and position from a simulation upon perturbing at  the point marked with a bullet in (a), and PDE parameters in \eqref{eq:three_component_system} are $\alpha\approx 0.48$, $\beta\approx-0.27$ and \eqref{e:numfix}, See also Figure~\ref{f:numintro}.
\label{f:movper}}
\end{center}
\end{figure}


\section{Conclusions and outlook}\label{s:conclusion}

We have demonstrated novel aspects in the rich dynamics of front solutions in the PDE \eqref{eq:three_component_system} by focusing on instabilities of stationary front solutions. Specifically, we gave a rigorous analysis revealing that the temporal evolution of the velocity of fronts is governed by a planar ODE, and we unfolded the bifurcation scenario of a Bogdanov-Takens point with symmetry for these. 
The main novelties of the present work consist of the rigorous argument for the existence of a second generalized eigenfunction for the operator arising from linearization around a stationary front, and in the effective method to compute the critical coefficients for the reduced system on the center manifold using solely information on the previously computed Evans function and existence condition for uniformly traveling fronts.

These results put us in a position to analyse the unfolding of the triple zero eigenvalue for front dynamics in the PDE \eqref{eq:three_component_system} with higher degeneracies: either the SBTD case or the imprint of a butterfly catastrophe in the SBTB case, see Definition~\ref{DEF}. These higher codimension problems require determining an additional center manifold coefficient, and also pose challenges on the level of the unfolding theory for ODEs, e.g.\ \cite{Khibnik}.

Equipped with the presented framework we expect to find Jordan chains of higher order upon addition of more slow components. That is, for the $(n+1)$-component system with the perturbed Allen-Cahn `fast' component $ U $ coupled to $n$ `slow' linear equations. In particular, such a 4-component system 
\begin{align} \nonumber 
  \left\{ 
 \begin{array}{rlrl}
                                         \partial_t U & = &  \eps^2 \partial_x^2 U & + \ U - U^3 - \eps \mathcal{G}(V_1, V_2, V_3) \, , \\[.2cm]
  \frac{\hat{\tau}_j}{\eps^2}   \,  \partial_t V_j  & = &  D_j^2  \partial_x^2 V_j & + \ U - V_j \, ,  \qquad j = 1, 2, 3 \, ,
 \end{array}
 \right.
\end{align}
would yield a Jordan block of length four and, hence, a three-dimensional reduced system on the center manifold (after factoring out translations). By appropriately changing the coupling of all components to imprint the desired singularity structure, a similar analysis as illustrated here could lead to a normal form of a chaotic system, and thus to one of the rare cases where chaos can be rigorously proved in the context of a nonlinear PDE.

\section*{Acknowledgements}
PvH thanks Leiden University for its hospitality.  JR notes this paper is a contribution to project M2 of the Collaborative Research Centre TRR 181 ``Energy Transfer in Atmosphere and Ocean" funded by the Deutsche Forschungsgemeinschaft (DFG, German Research Foundation) - project number 274762653. The authors also acknowledge that a crucial part of this manuscript was established during the first and second joint Australia-Japan workshop on dynamical systems with applications in life sciences.

\appendix


\section{Leading order form of eigenfunctions}\label{app:formal_computation}

In the proof of Proposition~\ref{proposition:jordan_block} and Remark~\ref{remark:first} we use leading order information on the eigenfunction and first generalized eigenfunction for the zero eigenvalue. The corresponding statements and proofs can be found here in Lemma~\ref{lemma:eigenfunctions} and Lemma~\ref{lemma:first_generalized_eigenfunction}. The section is completed by giving the leading order expressions for the second generalized eigenfunction in Lemma~\ref{lemma:second_generalized_eigenfunction}.


\begin{lemma}[Leading order of the eigenfunctions]\label{lemma:eigenfunctions}
The eigenfunctions $ \Phi_{\hlambda} $ belonging to the small eigenvalues from Proposition~\ref{proposition:evans_function} are to leading order given by
\begin{align*}
\left[
\begin{array}{c}
 0 \\[.2cm]
h_v(\hlambda) e^{ h_v(\hlambda) x} \\[.2cm]
h_w(\hlambda) e^{ h_w(\hlambda) x} 
\end{array}
\right]
\chi_{s-}(x)
+
\left[
\begin{array}{c}
 \frac{\frac12 \sqrt2}{\varepsilon} \, {\rm sech}^2\left[\frac{x}{\sqrt{2}\varepsilon}\right] \\[.2cm]
 h_v(\hlambda)  \\[.2cm]
 h_w(\hlambda) 
\end{array}
\right]
\chi_{f}(x)
+
\left[
\begin{array}{c}
 0 \\[.2cm]
 h_v(\hlambda)  e^{- h_v(\hlambda)  x} \\[.2cm]
 h_w(\hlambda) e^{-  h_w(\hlambda) x} 
\end{array}
\right]
\chi_{s+}(x) \, ,
\end{align*}
with
\[
 h_v(\hlambda) = \frac{1}{\sqrt{\htau \hlambda + 1}} \, , \quad  h_w(\hlambda) = \frac{1}{D\sqrt{\htheta \hlambda + 1}} \, .
\]
\end{lemma}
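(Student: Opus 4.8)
The plan is to construct $\Phi_\hlambda$ by matched asymptotic expansions that exploit the two-scale structure already visible in the front profile \eqref{eq:front}: a fast interior layer of width $\calO(\eps)$ centred at the interface $x=0$, flanked by the two slow outer regions $\chi_{s\pm}$. Writing the scaled eigenvalue problem \eqref{eq:evp} componentwise with $\lambda=\eps^2\hlambda$, the $v$- and $w$-equations are regular in $\eps$,
\begin{align*}
 \partial_x^2\Phi^v-(1+\htau\hlambda)\Phi^v=-\Phi^u\,,\qquad
 D^2\partial_x^2\Phi^w-(1+\htheta\hlambda)\Phi^w=-\Phi^u\,,
\end{align*}
whereas the $u$-equation $\eps^2(\partial_x^2-\hlambda)\Phi^u+(1-3(u^{\rm h})^2)\Phi^u=\eps(\alpha\Phi^v+\beta\Phi^w)$ is singularly perturbed. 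First I would solve on each scale separately and then match. In the outer regions $u^{\rm h}=\pm1$ up to exponentially small corrections, so $1-3(u^{\rm h})^2=-2+\ldots$ and the leading-order $u$-equation forces $\Phi^u\equiv0$ there, which accounts for the vanishing $u$-entries in the $\chi_{s\pm}$ blocks of the claimed form.

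With $\Phi^u=0$ the slow equations decouple into homogeneous constant-coefficient ODEs, and imposing boundedness as $x\to\pm\infty$ selects on each side the single decaying mode. Thus each outer field is of the shape $(\text{amplitude})\cdot e^{-r|x|}$, with decay rate the positive characteristic root, namely $\sqrt{1+\htau\hlambda}$ for $\Phi^v$ and $\sqrt{1+\htheta\hlambda}/D$ for $\Phi^w$; the amplitudes remain free at this stage and will be pinned down by matching to the inner solution at the layer.

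For the inner solution I would rescale $\xi=x/\eps$, so that $u^{\rm h}=\tanh(\xi/\sqrt2)$ and the $u$-equation collapses at leading order to the linearized Allen--Cahn problem $(\partial_\xi^2+1-3\tanh^2(\xi/\sqrt2))\Phi^u=0$, whose kernel is spanned by $\tfrac{d}{d\xi}\tanh(\xi/\sqrt2)\propto\mathrm{sech}^2(\xi/\sqrt2)$; the coupling to $v,w$ is $\calO(\eps)$ and hence subleading. Fixing the otherwise free scalar normalization of the eigenfunction by comparison with $\partial_x\Zsf$ at $\hlambda=0$ yields the stated amplitude $\tfrac{\sqrt2}{2\eps}\,\mathrm{sech}^2(x/(\sqrt2\eps))$. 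Across the $\calO(\eps)$-thin layer the slow fields $\Phi^v,\Phi^w$ vary negligibly and are therefore, to leading order, constant and equal to their interface values $h_v(\hlambda),h_w(\hlambda)$, which explains the constant middle entries.

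The remaining and decisive step is the matching, which simultaneously fixes the outer amplitudes and determines $h_v,h_w$. Treating $\Phi^u$ as a source concentrated at $x=0$ with total mass $\int\Phi^u=2$ coming from the normalization above, I would solve the slow equations with the Green's function $-\tfrac{1}{2r}e^{-r|x|}$ (and its $D$-weighted analogue) and read off the interface values $h_v=1/\sqrt{1+\htau\hlambda}$ and $h_w=1/(D\sqrt{1+\htheta\hlambda})$, which then serve as the outer amplitudes after matching in the intermediate zone $x=\pm\sqrt\eps$. Evenness of $\Phi_\hlambda$ follows from the oddness of $\Zsf$ together with the reflection symmetry of \eqref{eq:three_component_system}. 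I expect the genuine difficulty to lie not in any single computation but in the consistent bookkeeping of the intermediate-region matching and in verifying that the interface flux balance is solvable precisely when $\hlambda$ is a root of the Evans function $\calD$ of Proposition~\ref{proposition:evans_function}; this solvability condition is exactly what ties the formal construction back to the spectrum and certifies that a genuine eigenfunction of the stated leading-order form exists.
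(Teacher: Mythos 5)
Your derivation of the profiles coincides in substance with the paper's proof: both are matched slow--fast expansions in which the outer $u$-component is forced to vanish because $1-3(u^{\rm h})^2\to-2$, the inner $u$-component is the kernel element $\propto\mathrm{sech}^2(\xi/\sqrt2)$ of the linearized Allen--Cahn operator (normalized so that $\int\Phi^u\,dx=2$), and the slow fields are constant across the layer. Your matching device --- treating $\Phi^u$ as the source $2\delta_0$ in the slow equations and reading off $h_v$, $h_w$ from the Green's functions --- is exactly equivalent to the paper's matching of the derivative jumps $q_{1,s+}(0)-q_{1,s-}(0)=-\int u_{0,f}\,d\xi$ and its $r$-analogue, and it yields the same amplitudes and the same outer decay rates $\sqrt{1+\htau\hlambda}$ and $\sqrt{1+\htheta\hlambda}/D$ as the paper's own computation (the exponents printed in the statement of Lemma~\ref{lemma:eigenfunctions} differ from these and coincide with them only at $\hlambda=0$; your rates agree with the proof, so this discrepancy is not a defect of your argument).

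There is, however, a genuine gap in your final, ``decisive'' step. You propose to certify the construction by checking that the interface flux balance is solvable precisely when $\hlambda$ is a root of the Evans function. That check cannot work: the flux balance (equivalently, your Green's-function solve) is solvable for \emph{every} admissible $\hlambda$ and merely fixes the amplitudes $h_v,h_w$; it imposes no condition on $\hlambda$ whatsoever, so at this stage your scheme produces a formal ``eigenfunction'' for every $\hlambda$ and never ties back to the spectrum. In the paper the eigenvalue condition arises one order deeper in the \emph{inner} region: the $\mathcal{O}(\eps^2)$ equation for $u_{2,f}$ carries the inhomogeneity $6u^{\rm h}_{0,f}u^{\rm h}_{2,f}u_{0,f}+\alpha v_{1,f}+\beta w_{1,f}+\hlambda u_{0,f}$, and the Fredholm alternative for this equation (orthogonality to $\mathrm{sech}^2$) is exactly $\calD(\hlambda)=0$; this is where the constants $h_v,h_w$ you computed re-enter. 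Carrying this out requires an ingredient absent from your plan: the second-order correction $u^{\rm h}_{2,f}$ of the \emph{front itself}, whose contribution is evaluated through the integral identity \eqref{eq:integral_value} imported from \cite{CDHR15} and supplies the constant part $\alpha+\beta/D$, i.e.\ the ``$1$''s in $\alpha\bigl(1-1/\sqrt{\htau\hlambda+1}\bigr)+\frac{\beta}{D}\bigl(1-1/\sqrt{\htheta\hlambda+1}\bigr)$. Your remark that the $v,w$-coupling is ``subleading'' in the inner region is true for the leading-order profile, but it is precisely these subleading terms, together with the $u^{\rm h}_{2,f}$ term, that generate the solvability condition; omitting the latter would produce a wrong Evans function and leave the construction uncertified.
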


\bigskip

\begin{proof}
Using the notation $ \Phi_{\hlambda} = (u,v,w) $ for the eigenfunction corresponding to the eigenvalue $ \lambda = \eps^2 \hlambda $, the ODE arising from the eigenvalue problem \eqref{eq:evp} for small eigenvalues reads
\begin{align} \nonumber 
\left\{ 
 \begin{array}{rcl}
  \eps u' & = &  p \, , \\
  \eps p' & = & \eps^2 \hlambda u + (3 (u^{\rm h})^2 -1)u + \eps(\alpha v + \beta w) \, , \\
              v' & = &  q \, , \\
	      q' & = & (\htau \hlambda +1)v - u \, , \\
	      w' & = &  r \, , \\
	      r' & = & \frac{1}{D^2}(\htheta \hlambda +1)w - \frac{1}{D^2} u \, ,
  \end{array}
 \right.
\end{align}
In the language of slow-fast ODEs, this is the slow system with corresponding fast system given by
\begin{align} \nonumber 
\left\{ 
 \begin{array}{rcl}
   \dot{u} & = &  p \, , \\
   \dot{p} & = & \eps^2 \hlambda u + (3 (u^{\rm h})^2 -1)u + \eps(\alpha v + \beta w) \, , \\
   \dot{v} & = & \eps q \, , \\
   \dot{q} & = & \eps (\htau \hlambda +1)v - \eps u \, , \\
   \dot{w} & = & \eps r \, , \\
   \dot{r} & = & \frac{\eps}{D^2}(\htheta \hlambda +1)w - \frac{\eps}{D^2} u \, ,
  \end{array}
 \right.
\end{align}
where the dot denotes differentiation with respect to $ \xi = x/\eps$. In the regions $ I_{s^{\pm}} $ we will use a regular expansion of the eigenfunction in the slow system, while in the regions $ I_{f} $ we will use a regular expansion of the eigenfunction in the fast system. In the following we will use the following notation: Regular expansions of the amplitude will be denoted by $ u = u_0 + \eps u_1 + \eps^2 u_2 + \ldots$ and similarly for $ v, w $ and $ u^{\rm h} $. Furthermore, we add to the index `$f$' in the fast field and `$s{\pm}$' in the slow fields.\\

Before we demonstrate the calculations we would like to remark that we make use of the following observations from \cite{CDHR15} (which can already be found in \cite{DHK09}): We have that
\begin{align}
\label{eq:u_het_hot}
 u^{{\rm{h}}}_f = u^{\rm h}_{0,f} + \eps^2 u^{h}_{2,f} + \mathcal{O}(\eps^3) \, ,
\end{align}
that is, there is no first order correction of the stationary front in the fast field. Furthermore, we will need to use the value of the integral
\begin{align}\label{eq:integral_value}
 6 \int_{-\infty}^{\infty} u^{\rm h}_{0,f}(\xi)  u^{\rm h}_{2,f} (\xi){\rm sech}^4\left( \frac12\sqrt{2} \xi \right)d\xi = - 4 \left( \alpha + \frac{\beta}{D} \right) \, .
\end{align}
Equipped with these facts we will now recursively solve the perturbation hierarchy to construct an eigenfunction, i.e., a homoclinic to zero .\\ 

\underline{\it Fast field, $ \mathcal{O}(1) $}: We get for the $ u $-component
\[
 \ddot{u}_{0,f} = \left(3 (u^{\rm h}_{0,f}\right)^2 -1)u_{0,f} \, \\, \quad u_{0,f}(\xi) = C \,{\rm sech}\left( \frac12\sqrt{2} \xi \right)\, , C \in \mathbb{R} \, .
\]
while $\dot{v}_{0,f} = \dot{q}_{0,f} = \dot{w}_{0,f} = \dot{r}_{0,f} = 0 $. In order to compute the constant values assumed by these latter components we need to switch to the slow fields.\\

\underline{\it Slow fields, $ \mathcal{O}(1) $}: We have $ u_{0,s\pm}  = p_{0,s\pm} = 0$, while the equations for $ v,w $-components read
\[
 v_{0,s\pm}'' = (\htau \hlambda +1)v_{0,s\pm} \, , \quad w_{0,s\pm}'' = \frac{1}{D^2} (\htheta \hlambda +1)w_{0,s\pm} \, ,
\]
which are solved by exponentials. Using the information from the fast field that $ v, q, w, r $ are constant and matching slow and fast solutions gives that $ v_{0,s\pm}  = q_{0,s\pm} = w_{0,s\pm}  = r_{0,s\pm} = 0 $, therefore also $ v_{0,f}  = q_{0,f} = w_{0,f}  = r_{0,f} = 0 $.\\

\underline{\it Fast field, $ \mathcal{O}(\eps) $}: We get for the $ u $-component due to \eqref{eq:u_het_hot} and $ v_{0,f} = w_{0,f} = 0 $ again
\[
 \ddot{u}_{1,f} = \left(3 (u^{\rm h}_{0,f}\right)^2 -1)u_{1,f} \, \\, \quad u_{1,f}(\xi) = \cc \, {\rm sech}\left( \frac12\sqrt{2} \xi \right)\, , \cc \in \mathbb{R} \, .
\]
and we choose in this case $ \cc = 0 $ since, otherwise, this would simply add an $ \eps $ correction to $ C $ from the leading order. Furthermore, we have $ \dot{v}_{1,f} = \dot{w}_{1,f} = 0$ and
\begin{align}\label{eq:q_dot_r_dot_jump}
 \dot{q}_{1,f} = -u_{0,f} \, , \qquad  \dot{r}_{1,f} = - \frac{1}{D^2} u_{0,f} \, . 
\end{align}
Again, in order to compute the constant values assumed by these latter components we need to switch to the slow fields.\\

\underline{\it Slow fields, $ \mathcal{O}(\eps) $}: We have $ u_{1,s\pm}  = p_{1,s\pm} = 0$, while the equations for $ v,w $-components read
\[
 v_{1,s\pm}'' = (\htau \hlambda +1)v_{1,s\pm} \, , \quad w_{1,s\pm}'' = \frac{1}{D^2} (\htheta \hlambda +1)w_{1,s\pm} \, ,
\]
which is solved by
\[
 v_{1,s\pm}(x) = A_{\pm} e^{\mp \sqrt{\htau \hlambda +1} x} \, , \quad  w_{1,s\pm}(x) = B_{\pm} e^{\mp \frac{1}{D} \sqrt{\htheta \hlambda +1} x} \, ,
\]
where we already took into account that the eigenfunction components need to approach zero at the infinities. Again matching these solutions over the fast fields using $ v_{1,s-}(0) =      v_{1,s+}(0), w_{1,s-}(0) = w_{1,s+}(0) $ gives $ A_+ = A_-=: A, B_+ = B_-=: B $. Furthermore, matching the $ q,r $-components using \eqref{eq:q_dot_r_dot_jump} gives
\[
 q_{1,s+}(0) - q_{1,s-}(0) = \int_{-\infty}^{\infty} \dot{q}_{1,f}(\xi) d \xi = - \int_{-\infty}^{\infty} u_{0,f}(\xi) d \xi = - 2 \sqrt{2} C = -2 A \sqrt{\htau \hlambda +1} \, ,
\]
hence, $ A = \sqrt{2}C/\sqrt{\htau \hlambda +1} $. The analogous procedure for the $ r $-component gives $ B = \sqrt{2}C/(D\sqrt{\htheta \hlambda +1})  $. Hence, the values of the components in the fast fields are $ v_{1,f} = A , w_{1,f} = B $.\\

\underline{\it Fast field, $ \mathcal{O}(\eps^2) $}: We get for the $ u $-component due to \eqref{eq:u_het_hot} the equation
\[
 \ddot{u}_{2,f} = \left(3 (u^{\rm h}_{0,f}\right)^2 -1)u_{2,f} + 6 u^{\rm h}_{0,f} u^{\rm h}_{2,f} u_{0,f} + \alpha v_{1,f} + \beta w_{1,f} + \hlambda u_{0,f} \, ,
\]
for which we enforce the solvability condition

\begin{align*}
 &\underbrace{6 \int_{-\infty}^{\infty} u^{\rm h}_{0,f}(\xi)u^{\rm h}_{2,f}(\xi) u_{0,f}^2(\xi) d \xi}_{=-4C^2(\alpha+\beta/D)} \\ &\qquad+ \left(\alpha \frac{\sqrt{2}C}{\sqrt{\htau \hlambda +1}} + \beta \frac{\sqrt{2}C}{D\sqrt{\htheta \hlambda +1}}  \right) \underbrace{\int_{-\infty}^{\infty} u_{0,f}(\xi) d\xi}_{=2\sqrt{2}C} +  \hlambda  \underbrace{\int_{-\infty}^{\infty} u_{0,f}(\xi)^2 d \xi}_{=C^2 (4\sqrt{2}/3)} = 0  \, ,
\end{align*}
where we made use of \eqref{eq:integral_value}. Note that $ C $ drops out of the equation since, of course, eigenfunctions are only unique up to multiplication with a constant. We choose in the statement of the proposition
$
 C = \frac12 \sqrt2/\eps \, ,
$
since this scaling naturally arises when computing the eigenfunction for $ \lambda = 0 $ through differentiation of the the stationary front.
\end{proof}


\begin{lemma}[Leading order of first generalized eigenfunction]\label{lemma:first_generalized_eigenfunction}
Let the parameters be chosen such that \eqref{eq:double_zero} is satisfied, that is, that the zero eigenvalue has algebraic multiplicity two. Then there is a generalized eigenfunction $ \Psi $ which is to leading order given by
\begin{align}\label{eq:gef}
\begin{array}{c}
\left[
\begin{array}{c}
\Psi_u(x) \\[.2cm]
\Psi_v(x) \\[.2cm]
\Psi_w(x) 
\end{array}
\right]
= 
\left[
\begin{array}{c}
 \eps u_{s-}(x) \\[.3cm]
 v_{0,s-}(x) \\[.3cm]
 w_{0,s-}(x)
\end{array}
\right]
\chi_{s-}(x)
+
\left[
\begin{array}{c}
 \frac{\eps}{3 \sqrt{2}}  \\[.3cm]
- \frac{\htau}{2} \\[.3cm]
- \frac{\htheta}{2D} 
\end{array}
\right]
\chi_{f}(x)
+
\left[
\begin{array}{c}
 \eps u_{s+}(x)  \\[.3cm]
 v_{0,s+}(x) \\[.3cm]
 w_{0,s+}(x)
\end{array}
\right]
\chi_{s+}(x) \, ,
\end{array}
\end{align}
with
\[
v_{0,s\pm}(x) = - \frac12 \htau (1\pm x) e^{\mp x} \, , \quad w_{0,s\pm}(x) = - \frac12 \frac{\htheta}{D} \left(1 \pm \frac1D x\right) e^{\mp x/D} \, .
\]
and
\[
u_{s,\pm}(x) = - \frac{1}{2} \alpha v_{0,s\pm}(x) - \frac{1}{2} \beta w_{0,s\pm}(x) \, .
\]

\end{lemma}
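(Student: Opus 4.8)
The plan is to solve the inhomogeneous problem $\calL\Psi=\eps^2\Phi$ directly by the same matched-asymptotics scheme used for the eigenfunction in Lemma~\ref{lemma:eigenfunctions}, the only new ingredient being that the right-hand side is now the known eigenfunction $\Phi$ (evaluated at $\hlambda=0$) instead of zero. Writing $\calL\Psi=\eps^2\Phi$ in components and multiplying the $v$- and $w$-rows by $\htau/\eps^2$ and $\htheta/\eps^2$, the problem becomes
\begin{align}\nonumber
(\partial_x^2-1)\Psi_v + \Psi_u = \htau\Phi_v\,,\qquad (D^2\partial_x^2-1)\Psi_w + \Psi_u = \htheta\Phi_w\,,
\end{align}
together with $(\eps^2\partial_x^2+1-3(u^{\rm h})^2)\Psi_u-\eps\alpha\Psi_v-\eps\beta\Psi_w=\eps^2\Phi_u$. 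As in Lemma~\ref{lemma:eigenfunctions} I would pass to the associated slow and fast systems, posit regular expansions of $\Psi_u,\Psi_v,\Psi_w$ in the fast layer $\chi_f$ and in the outer regions $\chi_{s\pm}$, and solve the hierarchy order by order while matching across the interfaces. From Lemma~\ref{lemma:eigenfunctions} the forcing terms at $\hlambda=0$ are $\Phi_v=e^{\mp x}$ and $\Phi_w=\tfrac1D e^{\mp x/D}$ on $\chi_{s\pm}$, and $\Phi_u=\tfrac{\sqrt2/2}{\eps}\,\mathrm{sech}^2(x/(\sqrt2\,\eps))$ in the fast layer.

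The core computation is in the slow fields. There $u^{\rm h}\to\pm1$, so $1-3(u^{\rm h})^2\to-2$ and, since $\Psi_u=\mathcal{O}(\eps)$ in the outer region, the $u$-row collapses at leading order to the algebraic relation $-2\,u_{s\pm}=\alpha v_{0,s\pm}+\beta w_{0,s\pm}$, which is exactly $u_{s\pm}=-\tfrac12(\alpha v_{0,s\pm}+\beta w_{0,s\pm})$. The $v$- and $w$-rows reduce to the constant-coefficient problems $(\partial_x^2-1)v_{0,s\pm}=\htau e^{\mp x}$ and $(D^2\partial_x^2-1)w_{0,s\pm}=\tfrac\htheta D e^{\mp x/D}$. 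The decisive point is that these forcings are \emph{resonant} --- they already solve the homogeneous equations --- so the particular solutions are secular, proportional to $x e^{\mp x}$ and $x e^{\mp x/D}$ with coefficients $\tfrac\htau2$ and $\tfrac{\htheta}{2D^2}$; this is what produces the factors $(1\pm x)$ and $(1\pm\tfrac1D x)$ in the stated formula. Imposing decay at $\pm\infty$ leaves one free constant per region. Since the forcings $\htau\Phi_v,\htheta\Phi_w$ are merely continuous (not singular) the solutions are $C^1$ across $x=0$, so continuity of $v,w$ and of their derivatives determines these constants uniquely, giving the common fast-layer values $\Psi_v=-\htau/2$, $\Psi_w=-\htheta/(2D)$ and hence the claimed $v_{0,s\pm},w_{0,s\pm}$.

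It remains to fix the $u$-component in the fast layer. Normalizing away the bounded homogeneous (i.e.\ $\mathrm{sech}^2$) mode using the freedom to add multiples of $\Phi$, the leading fast contribution to $\Psi_u$ sits at order $\eps$ and solves $L_0\Psi_{u,1}=-\tfrac12\big(\alpha\htau+\tfrac\beta D\htheta\big)+\tfrac{\sqrt2}{2}\,\mathrm{sech}^2(\xi/\sqrt2)$, with $L_0=\partial_\xi^2+1-3(u^{\rm h}_{0,f})^2$ and $\ker L_0=\mathrm{span}(\mathrm{sech}^2(\xi/\sqrt2))$. Here I expect the main obstacle to be exactly this solvability bookkeeping: projecting the right-hand side onto $\ker L_0$ and using $\int\mathrm{sech}^2(\xi/\sqrt2)\,d\xi=2\sqrt2$, $\int\mathrm{sech}^4(\xi/\sqrt2)\,d\xi=\tfrac{4\sqrt2}{3}$, the solvability condition reduces precisely to $\kappa_1^0=\alpha\htau+\tfrac\beta D\htheta-\tfrac{2\sqrt2}{3}=0$, which is the hypothesis \eqref{eq:double_zero} and hence guarantees that $\Psi$ exists (cf.\ Remark~\ref{remark:first}); evaluating the constant far-field level of $\Psi_{u,1}$ under $\kappa_1^0=0$ then yields the fast-layer value $\eps/(3\sqrt2)$. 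Keeping the resonant (secular) terms and the various signs straight throughout this matching, and verifying that the only obstruction coincides with $\kappa_1^0=0$, is the delicate part; everything else is a routine repetition of the hierarchy already carried out for $\Phi$.

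As a consistency check one can instead read $\Psi$ off the traveling-front family via $\Psi\propto\partial_c Z^{\rm tf}|_{c=0}$: differentiating the decay rates $\lambda_\pm(c)$ of the outer $v,w$-tails at $c=0$ reproduces the same secular coefficients $\tfrac\htau2$ and $\tfrac{\htheta}{2D^2}$ directly, and the $c$-derivatives of the continuity conditions at $x=0$ reproduce the homogeneous coefficients $-\htau/2,-\htheta/(2D)$, confirming the matched-asymptotics result.
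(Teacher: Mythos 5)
Your proposal is correct and follows essentially the same route as the paper's Appendix~A proof: a matched slow/fast expansion in which the resonant forcing in the slow-field equations produces the secular $(1\pm x)$-terms, $C^0$/$C^1$ matching across the fast layer fixes the constants $-\htau/2$ and $-\htheta/(2D)$, and the $\mathcal{O}(\eps)$ fast-field solvability condition is exactly $\kappa_1^0=0$, after which the constant particular solution yields the value $\eps/(3\sqrt2)$. The only presentational difference is that the paper works with the first-order slow--fast system and matches the derivative variables $q,r$ across the layer, which is equivalent to your $C^1$ condition --- note that your $C^1$ claim does rely on first normalizing away the $\mathrm{sech}^2$ mode in $\Psi_u$ (as you do one paragraph later), since otherwise the layer would contribute an $\mathcal{O}(1)$ derivative jump exactly as it does for $\Phi$ itself.
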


\bigskip

\begin{proof} 
For notational simplicity we write $ (\Psi_u,\Psi_v,\Psi_w)  = (u,v,w) $. The ODE arising from the equation for the generalized eigenfunction reads 
\begin{align} \nonumber 
\left\{ 
 \begin{array}{rcl}
  \eps u' & = &  p \, , \\
  \eps p' & = & \eps^2 \Phi_u + (3 (u^{\rm h})^2 -1)u + \eps(\alpha v + \beta w) \, , \\
              v' & = &  q \, , \\
	      q' & = & v + \htau \Phi_v - u \, , \\
	      w' & = &  r \, , \\
	      r' & = & \frac{1}{D^2}w + \frac{\htheta}{D^2} \Phi_w - \frac{1}{D^2} u \, ,
  \end{array}
 \right.
\end{align}
recalling that $ \Phi $ is the eigenfunction for the zero eigenvalue and $ u^{\rm h} $ is the $ u $-component of the front solution \eqref{eq:front}. The corresponding fast system given by
\begin{align} \nonumber
\left\{ 
 \begin{array}{rcl}
   \dot{u} & = &  p \, , \\
   \dot{p} & = & \eps^2 \Phi_u + (3 (u^{\rm h})^2 -1)u + \eps(\alpha v + \beta w) \, , \\
   \dot{v} & = & \eps q \, , \\
   \dot{q} & = &  \eps v + \eps \htau \Phi_v- \eps u \, , \\
   \dot{w} & = & \eps r \, , \\
   \dot{r} & = & \frac{\eps}{D^2} w + \frac{\eps}{D^2} \htheta \Phi_w - \frac{\eps}{D^2} u  \, ,
  \end{array}
 \right.
\end{align}
where again the dot denotes differentiation with respect to $ \xi = x/\eps$.\\

\underline{\it Fast field, $ \mathcal{O}(1) $}: As before we get 
\[
 \ddot{u}_{0,f} = \left(3 (u^{\rm h}_{0,f}\right)^2 -1)u_{0,f} \,, \quad \dot{v}_{0,f} = \dot{q}_{0,f} = \dot{w}_{0,f} = \dot{r}_{0,f} = 0 \, ,
\]
We can choose $ u_{0,f} = 0 $ this time: on the one hand its value will not change the computations later on (it will appear as product with $ u^{\rm h}_{1,f} $ which is zero), on the other hand it is already part of the eigenfunction itself. In order to compute the constant values assumed by the other components we need to switch to the slow fields.\\

\underline{\it Slow fields, $ \mathcal{O}(1) $}: We have $ u_{0,s\pm}  = p_{0,s\pm} = 0$, while the equations for $ v,w $-components read
\[
 v_{0,s\pm}'' = v_{0,s\pm} + \htau e^{\mp x} \, , \quad w_{0,s\pm}'' = \frac{1}{D^2} w_{0,s\pm} + \frac{\htheta}{D^3} e^{\mp x/D} \, .
\]
We have that
\[
v_{0,s\pm}(x) = A_{\pm} e^{\mp x} \mp \frac12 \htau x e^{\mp x} \, , \quad w_{0,s\pm}(x) = B_{\pm} e^{\mp x/D} \mp \frac12 \frac{\htheta}{D^2} x e^{\mp x/D} \, , \qquad A_{\pm}, B_{\pm} \in \mathbb{R} \, .
\]
Matching with the information of the fast components $ v_{1,s-}(0) = v_{1,s+}(0), w_{1,s-}(0) = w_{1,s+}(0) $ gives $ A_+ = A_-=: A, B_+ = B_-=: B $ , while $ q_{1,s-}(0) = q_{1,s+}(0), r_{1,s-}(0) = r_{1,s+}(0) $ gives $ A = - \htau/2 , B = -  \htheta/(2D) $, so
\[
v_{0,s\pm}(x) = - \frac12 \htau (1\pm x) e^{\mp x} \, , \quad w_{0,s\pm}(x) = - \frac12 \frac{\htheta}{D} \left(1 \pm \frac1D x\right) e^{\mp x/D} \, .
\]
Hence, the values of the components in the fast fields are
\[
v_{0,f} = - \frac12 \htau \, , \quad w_{0,f} = - \frac12 \frac{\htheta}{D} \, .
\]

\underline{\it Fast field, $ \mathcal{O}(\eps) $}: We get for the $ u $-component due to \eqref{eq:u_het_hot} and the fact that 
\[
\Phi_u(\xi) = \left( \frac{1}{ \eps} \right) \, \frac12 \sqrt2 \,  {\rm sech}^2\left(\frac12 \sqrt2 \xi \right)
\]
the equation
\[
 \ddot{u}_{1,f} = \left(3 (u^{\rm h}_{0,f}\right)^2 -1)u_{1,f} +\frac12 \sqrt2 \,  {\rm sech}^2\left(\frac12 \sqrt2 \xi \right) \underbrace{+ \alpha v_{0,f} + \beta w_{0,f}}_{ = - \left(\frac12 \alpha \htau + \frac12 \frac{\beta}{D} \htheta \right)}  \, ,
\]
for which we get the solvability condition
\[
 \underbrace{ \frac12 \sqrt2 \int_{-\infty}^{\infty} {\rm sech}^4\left(\frac12 \sqrt2 \xi \right) d \xi}_{=4/3} - \left(\frac12 \alpha \htau + \frac12 \frac{\beta}{D} \htheta \right) \underbrace{\int_{-\infty}^{\infty}{\rm sech}^2\left(\frac12 \sqrt2 \xi \right)  d\xi}_{=2\sqrt{2}}  = 0  \, ,
\]
and, hence, the condition \eqref{eq:double_zero} which can also be written as $ {\calD}'(0) = 0 $. Rewriting this condition as
\[
- \left(\frac12 \alpha \htau + \frac12 \frac{\beta}{D} \htheta \right)  = -\frac{\sqrt{2}}{3}  \, ,
\]
and using the ansatz $ u_{1,f} = K \in \mathbb{R}$, we get
\[
 0 = \left[3 \, {\rm tanh}^2\left(\frac12 \sqrt2 \xi\right) -1\right]K +\frac12 \sqrt2 \,  {\rm sech}^2\left(\frac12 \sqrt2 \xi \right) - \frac{\sqrt{2}}{3}  \, ,
\]
which, by the identity $ {\rm sech}^2(z) = 1 - {\rm tanh}^2(z)  $ becomes
\[
 0 = \left[3 \, {\rm tanh}^2\left(\frac12 \sqrt2 \xi\right) -1\right]K + \frac{\sqrt2}{6} \, \left[1 - 3 \, {\rm tanh}^2\left(\frac12 \sqrt2 \xi\right)\right]  \, ,
\]
which gives $ K = \frac{1}{3 \sqrt{2}} $.
\end{proof}

The previous lemma was used in the proof of Proposition~\ref{proposition:jordan_block} for the existence of a second generalized eigenfunction. Here we give the formal computations that lead to first order expressions for it.


\begin{lemma}[Leading order of second generalized eigenfunction]\label{lemma:second_generalized_eigenfunction}
Let the parameters be chosen such that \eqref{eq:triple_zero} is satisfied, that is, that the zero eigenvalue has algebraic multiplicity three. Then there are two generalized eigenfunctions: $ \Psi $ as in Proposition~\ref{lemma:first_generalized_eigenfunction} and $ \widetilde{\Psi} $ which is to leading order given by
\begin{align*}
\left[
\begin{array}{c}
\widetilde{\Psi}_u(x) \\[.2cm]
\widetilde{\Psi}_v(x) \\[.2cm]
\widetilde{\Psi}_w(x) 
\end{array}
\right]
= 
\left[
\begin{array}{c}
 \eps \widetilde{u}_{s-}(x) \\[.3cm]
 \widetilde{v}_{0,s-}(x) \\[.3cm]
 \widetilde{w}_{0,s-}(x)
\end{array}
\right]
\chi_{s-}(x)
+
\left[
\begin{array}{c}
 \mathcal{O}(\eps^2) \\[.2cm]
 \frac{3\htau^2}{8} \\[.2cm]
 \frac{3\htheta^2}{8D} 
\end{array}
\right]
\chi_{f}(x)
+
\left[
\begin{array}{c}
 \eps \widetilde{u}_{s+}(x) \\[.3cm]
 \widetilde{v}_{0,s+}(x) \\[.3cm]
 \widetilde{w}_{0,s+}(x)
 \end{array}
\right]
\chi_{s+}(x)  + h.o.t. \, .
\end{align*}
with
\[
\widetilde{v}_{0,s\pm}(x) = \frac18 \htau^2 (x^2 \pm 3 x +3) e^{\mp x} \, , \quad \widetilde{w}_{0,s\pm}(x) = \frac{\htheta^2}{8D^3} (x^2 \pm 3D x + 3D^2) e^{\mp x/D}  \, .
\]
and
\[
\widetilde{u}_{s,\pm}(x) = - \frac{1}{2} \alpha \widetilde{v}_{0,s\pm}(x) - \frac{1}{2} \beta \widetilde{w}_{0,s\pm}(x) \, .
\]
\end{lemma}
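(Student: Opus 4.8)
The plan is to mirror the matched-asymptotics construction already carried out for Lemma~\ref{lemma:first_generalized_eigenfunction}, now driven by the inhomogeneity $\eps^2\Psi$ instead of $\eps^2\Phi$. Writing $\widetilde\Psi=(u,v,w)$, the defining relation $\calL\widetilde\Psi=\eps^2\Psi$ produces exactly the same slow and fast ODE systems as in the proof of Lemma~\ref{lemma:first_generalized_eigenfunction}, but with the components $\Phi_u,\Phi_v,\Phi_w$ replaced throughout by $\Psi_u,\Psi_v,\Psi_w$, whose leading-order forms are supplied by Lemma~\ref{lemma:first_generalized_eigenfunction}. I would then solve the resulting perturbation hierarchy order by order in the three regions $I_{s\pm}$ and $I_f$, using the front expansion \eqref{eq:u_het_hot} and the integral identity \eqref{eq:integral_value} as before.

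First I would treat the fast field at $\mathcal{O}(1)$, where $\ddot u_{0,f}=(3(u^{\rm h}_{0,f})^2-1)u_{0,f}$ again admits the choice $u_{0,f}=0$ (the genuine eigenfunction part being already captured by $\Phi$), while $v,q,w,r$ are constant. At $\mathcal{O}(1)$ in the slow fields the $u$-component vanishes and the $v,w$ equations become inhomogeneous, $\widetilde v_{0,s\pm}''-\widetilde v_{0,s\pm}=\htau\Psi_v$ and analogously for $w$. The decisive new feature is that $\Psi_v=-\tfrac12\htau(1\pm x)e^{\mp x}$ is \emph{resonant} with the homogeneous solutions $e^{\mp x}$, so the particular solution acquires a quadratic polynomial prefactor; a polynomial ansatz $(a+bx+cx^2)e^{\mp x}$ fixes $c$ and $b$, and the remaining constant $a$ is pinned down by matching, namely continuity of $\widetilde v,\widetilde w$ together with the vanishing of their derivatives at $x=0$ forced by the evenness of $\widetilde\Psi$ asserted in Proposition~\ref{proposition:jordan_block}. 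This reproduces $\widetilde v_{0,s\pm},\widetilde w_{0,s\pm}$ and, in particular, the fast-field values $\widetilde v_{0,f}=\tfrac38\htau^2$, $\widetilde w_{0,f}=\tfrac{3\htheta^2}{8D}$. The slow $u$-component is then algebraically determined at $\mathcal{O}(\eps)$ from $2u_{s\pm}+\alpha\widetilde v_{0,s\pm}+\beta\widetilde w_{0,s\pm}=0$ (using $3(\pm1)^2-1=2$), which yields $\widetilde u_{s\pm}$.

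The crux is the solvability condition in the fast field at $\mathcal{O}(\eps)$. Because $\Psi_u=\mathcal{O}(\eps)$, the term $\eps^2\Psi_u$ is now of order $\eps^3$ and drops out, unlike for the genuine eigenfunction where $\eps^2\Phi_u$ contributed a $\mathrm{sech}^2$ source at order $\eps$. Hence the $u$-equation reduces to $\ddot u_{1,f}=(3(u^{\rm h}_{0,f})^2-1)u_{1,f}+\alpha\widetilde v_{0,f}+\beta\widetilde w_{0,f}$ with \emph{constant} forcing $\alpha\widetilde v_{0,f}+\beta\widetilde w_{0,f}=\tfrac38(\alpha\htau^2+\tfrac{\beta}{D}\htheta^2)=\tfrac38\kappa^0_2$. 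Projecting onto the kernel $\mathrm{sech}^2(\cdot/\sqrt2)$ of the self-adjoint linearized fast operator forces $\kappa^0_2=0$, which is precisely the triple-zero condition \eqref{eq:triple_zero}. This both confirms the consistency of the construction and shows that $u_{1,f}$ may be taken to vanish, so that the fast $u$-component is $\mathcal{O}(\eps^2)$ as claimed; the $w$-channel is handled identically with $\htau\to\htheta$ and the appropriate powers of $D$.

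I expect the main obstacle to be the bookkeeping of the resonant forcing in the slow fields together with the layer matching: since $\Psi$ already grows linearly in $x$, the forcing is secular and the correct quadratic prefactors and their integration constants must be tracked carefully and shown to be compatible with both the evenness of $\widetilde\Psi$ and the matching of the derivative ($q,r$) components across the fast layer. Here the choice $u_{0,f}=0$ is exactly what removes the jump in $q,r$ that was present for the genuine eigenfunction in Lemma~\ref{lemma:eigenfunctions}, reducing the matching to plain continuity. Verifying that the $\mathcal{O}(\eps)$ fast solvability condition collapses to precisely $\kappa^0_2=0$, rather than to some spurious relation, is the linchpin that ties this formal expansion to the rigorous SLEP argument behind Proposition~\ref{proposition:jordan_block}.
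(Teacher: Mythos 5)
Your proposal is correct and follows essentially the same matched-asymptotics route as the paper: the same slow/fast hierarchy with $\Psi$ as the source, the resonant forcing producing the quadratic prefactors $\frac18\htau^2(x^2\pm 3x+3)e^{\mp x}$, the constant fixed by matching across the fast layer (your evenness argument is equivalent to the paper's continuity of the $q,r$ components, both yielding $A=\frac38\htau^2$), and the $\mathcal{O}(\eps)$ fast solvability condition collapsing to $\kappa_2^0=0$, i.e., \eqref{eq:triple_zero}. Your observation that $\eps^2\Psi_u=\mathcal{O}(\eps^3)$ drops out at this order, and that $\widetilde u_{0,f}=0$ removes the $q,r$ jump present for the genuine eigenfunction, matches the paper's computation exactly.
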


\bigskip

\begin{proof} 
For notational simplicity we write $ (\widetilde{\Psi}_u,\widetilde{\Psi}_v,\widetilde{\Psi}_w)  = (u,v,w) $. The ODE arising from the equation for the second generalized eigenfunction reads 
\begin{align} \nonumber 
\left\{ 
 \begin{array}{rcl}
  \eps u' & = &  p \, , \\
  \eps p' & = & \eps^2 \Psi_u + (3 (u^{\rm h})^2 -1)u + \eps(\alpha v + \beta w) \, , \\
              v' & = &  q \, , \\
	      q' & = & v + \htau \Psi_v - u \, , \\
	      w' & = &  r \, , \\
	      r' & = & \frac{1}{D^2}w + \frac{\htheta}{D^2} \Psi_w - \frac{1}{D^2} u \, ,
  \end{array}
 \right.
\end{align}
recalling that $ \Psi $ is the first generalized eigenfunction for the zero eigenvalue \eqref{eq:gef} and $ u^{\rm h} $ is the $ u $-component of the front solution \eqref{eq:front}. The corresponding fast system given by
\begin{align} \nonumber  
\left\{ 
 \begin{array}{rcl}
   \dot{u} & = &  p \, , \\
   \dot{p} & = & \eps^2 \Psi_u + (3 (u^{\rm h})^2 -1)u + \eps(\alpha v + \beta w) \, , \\
   \dot{v} & = & \eps q \, , \\
   \dot{q} & = &  \eps v + \eps \htau \Psi_v- \eps u \, , \\
   \dot{w} & = & \eps r \, , \\
   \dot{r} & = & \frac{\eps}{D^2} w + \frac{\eps}{D^2} \htheta \Psi_w - \frac{\eps}{D^2} u  \, ,
  \end{array}
 \right.
\end{align}
where again the dot denotes differentiation with respect to $ \xi = x/\eps$.\\

\underline{\it Fast field, $ \mathcal{O}(1) $}: Exactly as before we get 
\[
 \ddot{\widetilde{u}}_{0,f} = \left(3 (u^{\rm h}_{0,f}\right)^2 -1)\widetilde{u}_{0,f} \,, \quad \dot{\widetilde v}_{0,f} = \dot{\widetilde q}_{0,f} = \dot{\widetilde w}_{0,f} = \dot{\widetilde r}_{0,f} = 0 \, ,
\]
and once again we can choose $ \widetilde{u}_{0,f} = 0 $, and  switch to the slow fields to determine the constant values the remaining components assume.\\

\underline{\it Slow fields, $ \mathcal{O}(1) $}: We have $ \widetilde{u}_{0,s\pm}  = p_{0,s\pm} = 0$, while the equations for $ v,w $-components read
\begin{align}\label{eq:inhom_ODE}
 \widetilde{v}_{0,s\pm}''- \widetilde{v}_{0,s\pm} =  - \frac12 \htau^2 (1\pm x) e^{\mp x} \, , \quad D^2 \widetilde{w}_{0,s\pm}'' - \frac{1}{D^2} \widetilde{w}_{0,s\pm} =  - \frac12 \frac{\htheta^2}{D} \left(1 \pm \frac1D x\right) e^{\mp x/D} \, . 
\end{align}
We have that
\[
\widetilde{v}_{0,s\pm}(x) = A e^{\mp x} + \frac18 \htau^2 (x^2 \pm 3 x ) e^{\mp x} \, , \quad \widetilde{w}_{0,s\pm}(x) = B e^{\mp x/D} + \frac{\htheta^2}{8D^3} (x^2 \pm 3D x) e^{\mp x/D} \, ,
\]
with $A, B \in \mathbb{R}$ and 
where we already used the matching with the information of the fast components 
$$ \widetilde{v}_{1,s-}(0) = \widetilde{v}_{1,s+}(0), \widetilde{w}_{1,s-}(0) = \widetilde{w}_{1,s+}(0) \, .$$
Furthermore, $ q_{1,s-}(0) = q_{1,s+}(0), r_{1,s-}(0) = r_{1,s+}(0) $ gives
\[
\widetilde{v}_{0,s\pm}(x) = \frac18 \htau^2 (x^2 \pm 3 x +3) e^{\mp x} \, , \quad \widetilde{w}_{0,s\pm}(x) = \frac{\htheta^2}{8D^3} (x^2 \pm 3D x + 3D^2) e^{\mp x/D}  \, .
\]
Hence, the values of the components in the fast fields are
\begin{align}\label{eq:inhom_ODE_sol}
\widetilde{v}_{0,f} = \frac38 \htau^2 \, , \quad \widetilde{w}_{0,f} = \frac38 \frac{\htheta^2}{D} \, . 
\end{align}

\underline{\it Fast field, $ \mathcal{O}(\eps) $}: We get for the $ u $-component due to \eqref{eq:u_het_hot} the equation
\[
 \ddot{\widetilde{u}}_{1,f} = \left(3 (u^{\rm h}_{0,f}\right)^2 -1)\widetilde{u}_{1,f} + \alpha \widetilde{v}_{0,f} + \beta \widetilde{w}_{0,f} \, ,
\]
for which we get the solvability condition
\[
 \frac38 \left(\alpha \htau^2 + \frac{\beta}{D} \htheta^2 \right) \underbrace{\int_{-\infty}^{\infty}{\rm sech}^2\left(\frac12 \sqrt2 \xi \right)  d\xi}_{=2\sqrt{2}}  = 0  \, ,
\]
and, hence, the triple zero eigenvalue condition \eqref{eq:triple_zero}, which can also be written as $ {\calD}''(0) = 0 $. Since by this condition, we again recover
\[
 \ddot{\widetilde{u}}_{1,f} = \left(3 (u^{\rm h}_{0,f}\right)^2 -1)\widetilde{u}_{1,f} \, ,
\]
we choose with a similar argument as before $ \widetilde{u}_{1,f} = 0 $.
\end{proof}


\section{Proof of Lemma~\ref{L4} (Spectrum of the operator $ L_\eps $)}\label{app:evp_perturbation}
Introducing the notation
\begin{align} \nonumber   
\Phi =  \begin{pmatrix} \Phi_{u}\\ \Phi_{v, w} \end{pmatrix} \, , 
\end{align}
for the eigenfunction of the zero eigenvalue, we can write the corresponding eigenvalue problem as
\begin{align}  \nonumber   
\begin{pmatrix} L_\eps & \eps \A\\ \B & \C\end{pmatrix}\begin{pmatrix} \Phi_{u} \\ \Phi_{v, w} \end{pmatrix}
=
\begin{pmatrix} 0 \\ 0 \end{pmatrix} \, .
\end{align}
By solving the second equation for $ \Phi_{v, w} $ we get $ \Phi_{v, w} = -S^{-1} B \Phi_{u}$, and inserted into the first equation this gives
\begin{align}  \nonumber   
 L_\eps \Phi_u = \eps A S^{-1} B \Phi_u \, . 
\end{align}
Recalling that $ L_\eps $ has the form
\begin{align}  \nonumber 
 L_\eps = \varepsilon^2 \partial_x^2 + \left(1- 3u_0^{\rm h}\left(\frac{x}{\eps}\right)^2 \right) + \mathcal{O}(\eps^2) \, ,
\end{align}
we change to the fast variable to $ y = \frac{x}{\eps} $ and write 
\begin{align} \nonumber
 L_\eps =  L_0 + \eps^2 L_1 + \ldots\, ,
\end{align}
with
\begin{align} \nonumber
 L_0 = \partial_y^2 + \left(1- 3u_0^{\rm h}(y)^2 \right) \, .
\end{align}
Note that, since $ S^{-1} $ is a convolution operator with respect to $ x $, changing to $ y = \frac{x}{\eps} $ gives an additional factor of $ \eps $, so we write $ S^{-1} = \eps \overline{S}^{-1} $, where now $ \overline{S}^{-1} $ gives the convolution with respect to $ y $. Furthermore, we set
\begin{align} \nonumber
 \Phi_u(y) = \frac{1}{\eps} \phi_0(y) + \eps \Phi_1(y) + \ldots \, , 
\end{align}
and after plugging all these expanded quantities back into 
\begin{align} \nonumber
 L_\eps \Phi_u = \eps^2 A \overline{S}^{-1} B \Phi_u \, ,
\end{align}
we get the equation for $ \Phi_1 $ given by
\begin{align} \nonumber
 L_0 \Phi_1 = - L_1 \phi_0 +  A \overline{S}^{-1} B \phi_0 \, ,
\end{align}
which yields the solvability condition
\begin{align}\label{eq:L_1_phi_0_1}
 \langle L_1 \phi_0, \phi_0 \rangle = \langle A \overline{S}^{-1} B \phi_0, \phi_0 \rangle \, .
\end{align}
Equipped with this, we can now turn to the eigenvalue problem
\begin{align} \nonumber
 L_\eps \phi = \eps^2 \widetilde{\mu}_{\eps} \phi  \, .
\end{align}
Setting $ \phi = \frac{1}{\eps} \phi_0 + \eps \phi_1 + \ldots , \widetilde{\mu}_{\eps} = \widetilde{\mu}_{0} + \ldots $ (noting that $ \Phi_u $ and $ \phi $ must coincide in leading order, but might differ in the next orders), we get
\begin{align}  \nonumber
 L_0 \phi_1 = - L_1 \phi_0 + \widetilde{\mu}_{0}  \phi_0 \, ,
\end{align}
yielding the solvability condition
\begin{align}\label{eq:L_1_phi_0_2}
 \widetilde{\mu}_{0} = \frac{\langle L_1 \phi_0, \phi_0 \rangle }{\langle  \phi_0, \phi_0 \rangle} \, .
\end{align}
Combining \eqref{eq:L_1_phi_0_1} and \eqref{eq:L_1_phi_0_2} gives
\begin{align}  \nonumber 
 \widetilde{\mu}_{0} = \frac{\langle A \overline{S}^{-1} B \phi_0, \phi_0 \rangle }{\langle  \phi_0, \phi_0 \rangle} \, .
\end{align}
Finally, using that $ \overline{S}^{-1} = \frac{1}{\eps} S^{-1} $ and that $ \frac{1}{\eps} \phi_0\left(\frac{x}{\eps}\right) $ is a Dirac sequence, we get as claimed in the limit $ \eps \rightarrow 0$
\begin{align} \nonumber 
  \widetilde{\mu}_{0} = \frac{3\sqrt2}{2} \left(\alpha + \frac{\beta}{D} \right) \, .
\end{align}

\end{document}